\numberwithin{equation}{section} 
\font\tengothic=eufm10 scaled\magstep 1
\font\sevengothic=eufm7 scaled\magstep 1
\newtheorem*{theorem*}{Theorem}
\newtheorem{theorem}{Theorem}[section]
\newtheorem{lemma}[theorem]{Lemma}
\newtheorem{proposition}[theorem]{Proposition}
\newtheorem{corollary}[theorem]{Corollary}
\theoremstyle{definition} 
\newtheorem{definition}[theorem]{Definition} 
\newtheorem{remark}[theorem]{Remark}
\newtheorem{example}[theorem]{Example}
\newtheorem{notation}[theorem]{Notation}
\newtheorem{question}[theorem]{Question}
\newtheorem{notation and remark}[theorem]{Notation and Remark}
\newtheorem{definition and notation}[theorem]{Definition and Notation}
\newtheorem{observation}[theorem]{Observation}
\newfont{\bg}{cmr10 scaled\magstep4}
\newcommand{\bigzerou}{\smash{\lower1.7ex\hbox{\bg 0}}}
\newcommand\blfootnote[1]{%
  \begingroup
  \renewcommand\thefootnote{}\footnote{#1}%
  \addtocounter{footnote}{-1}%
  \endgroup
}
\begin{document}

\begin{center}
{\Large \bf Componentwise linear syzygies \\[1ex] and good almost regular sequences}
\end{center}
\medskip\medskip\medskip

\begin{center}
{\sc Satoru Isogawa} \\[1ex]
National Institute of Technology, Kumamoto College, \\
Yatsushiro  866-8501, Japan \\
E-mail: isogawa@kumamoto-nct.ac.jp
\end{center}
\medskip\medskip\medskip

\begin{quote}
{\footnotesize 
{\sc Abstract.}\ 
In this paper, we introduce the notion of $\gamma$-regular sequences to characterize the property 
that graded modules have componentwise linear syzygies. 
This extends Harima and Watanabe's characterization of componentwise linear ideals in terms of $\mathfrak{m}$-full property.
}
\end{quote}

\blfootnote{\noindent\textbf{Keywords}: Linear resolution; Componentwise linear module; Regularity; Almost regular sequence; $\mathfrak{m}$-full}
\blfootnote{\noindent \textbf{2020 Mathematics Subject Classification}: 13D02, 13D07.}

%
%

\section{Introduction}
\label{introduction}

In this paper, we study the componentwise linearity of the syzygies of a graded module over a polynomial ring using certain almost regular elements that satisfy a $\it good$ property, which we call $\gamma$-regular elements.
Successive reduction by $\gamma$-regular elements enables us to argue the regularity after reducing the Krull dimension of the base ring.
We denote by $\gamma$-$\mathrm{depth}_{R}M$ the maximal length of successive $\gamma$-regular elements for a module $M$ over a ring $R$.
Our main goal is to prove the following theorem.

\begin{theorem*}\label{Th} 
Let $K$ be an infinite field and $M$ be a finitely generated graded module over the polynomial ring $R=K\lbrack x_1, \cdots, x_n \rbrack $. Then the following are equivalent:
\begin{itemize}
\item[(1)] 
$M$ has a componetwise linear first syzygy.
\item[(2)] 
$\gamma$-$\mathrm{depth}M=n$.
\end{itemize}
\end{theorem*}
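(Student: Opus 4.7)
The strategy is induction on the number of variables $n$, using a generic linear form to reduce the Krull dimension on the algebra side, while simultaneously passing the componentwise linearity of the first syzygy to the quotient on the module side. Because $K$ is infinite, a sufficiently general $\ell\in R_1$ behaves like an almost regular element on $M$ and on all the truncations $M_{\langle d\rangle}$; the content of the theorem is that such an $\ell$ is in fact $\gamma$-regular exactly when the first syzygy of $M$ is componentwise linear. Throughout I will freely use, as suggested by the introduction, the fact that the ``good'' part of the $\gamma$-regular condition is tailored so as to interact cleanly with componentwise linearity (in particular, so as to generalize the Harima--Watanabe $\mathfrak{m}$-full characterization to the syzygy level).

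For the implication $(1)\Rightarrow(2)$, suppose the first syzygy $\Omega_1 M$ is componentwise linear. I would first pick a generic linear form $\ell\in R_1$ and prove two things: (a) $\ell$ is $\gamma$-regular on $M$; and (b) the first syzygy of $M/\ell M$, viewed over $R/\ell R\cong K[x_1,\dots ,x_{n-1}]$, is again componentwise linear. Step (a) is the translation of Harima--Watanabe to syzygies: a generic $\ell$ is almost regular, and the componentwise linearity of $\Omega_1 M$ provides exactly the extra ``goodness'' needed, for instance through the vanishing of the appropriate graded pieces of $\mathrm{Tor}^R(M,K)$ above the linear strand. Step (b) will follow by checking that tensoring a componentwise linear syzygy with $R/\ell R$ preserves componentwise linearity when $\ell$ is generic, a fact that can be proved via the Aramova--Herzog--Hibi style criterion comparing Betti numbers before and after reduction. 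Induction applied to $M/\ell M$ over $R/\ell R$ then yields $\gamma$-depth$(M/\ell M)=n-1$, so that $\ell$ followed by a $\gamma$-regular sequence on the quotient gives $\gamma\text{-depth}\, M\ge n$; the reverse inequality is formal.

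For $(2)\Rightarrow(1)$, suppose $\gamma\text{-depth}\, M=n$, and let $\ell_1,\dots,\ell_n$ be a $\gamma$-regular sequence; by genericity we may assume each $\ell_i$ is a linear form. Since $\gamma\text{-depth}\,(M/\ell_1 M)=n-1$, the inductive hypothesis applied over $R/\ell_1 R$ tells us that $M/\ell_1 M$ has componentwise linear first syzygy. The task is then to lift componentwise linearity of $\Omega_1(M/\ell_1 M)$ back to componentwise linearity of $\Omega_1 M$. This is the step where the precise definition of $\gamma$-regularity pays off: one compares the graded Betti numbers of $M$ with those of $M/\ell_1 M$ using a mapping cone (or the long exact sequence of $\mathrm{Tor}$) for the multiplication-by-$\ell_1$ short exact sequence, and the goodness of $\ell_1$ forces the cancellations that prevent any non-linear generator from appearing in $\Omega_1 M$ in each degree.

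The main obstacle is the lifting argument in the second direction. Almost regularity alone does not suffice to transport componentwise linearity back from $M/\ell M$ to $M$: one needs the extra ``good'' hypothesis to kill the connecting maps in the relevant $\mathrm{Tor}$ sequences in every internal degree. Formulating and applying this goodness condition correctly---and matching it, in the other direction, with what a generic linear form already satisfies when $\Omega_1 M$ is componentwise linear---is the technical core of the theorem, and will likely be extracted from the earlier sections as a ready-made lemma about $\gamma$-regular elements and graded Betti numbers.
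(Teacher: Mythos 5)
There is a genuine gap in your plan. Your outline captures the broad idea---reduce the number of variables by a well-chosen linear form, do induction on $n$---but it misses the structural heart of the argument, which is the reduction to syzygies generated in a \emph{single} internal degree and the simultaneous control of all those degree components.

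Concretely, the paper does not argue directly with a ``generic $\ell$ preserves componentwise linearity'' transfer. Instead it introduces the modules $\mathrm{C}^R_{\langle j\rangle}M = F_M/(\mathrm{Syz}_1^R M)_{\langle j\rangle}$ whose first syzygy is $(\mathrm{Syz}_1^R M)_{\langle j\rangle}$, observes that each of these is a module whose first syzygy is generated in a single degree, and proves the theorem for that case first (Proposition 8.13, by a comparatively direct induction). The delicate part of the general theorem is then proving that a $\gamma$-regular sequence of full length $n$ on $M$ is simultaneously $\gamma$-regular on \emph{every} $\mathrm{C}^R_{\langle j\rangle}M$, so that the single-degree result can be applied for each $j$. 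This is accomplished by the $\hat\gamma$-regularity machinery: Proposition 7.6 shows any full-length $M\text{-}\gamma$-sequence is automatically an $M\text{-}\hat\gamma$-sequence (because the final quotient has projective dimension $0$), and Lemma 9.4 shows $\hat\gamma\text{-}\mathrm{NZD}_R M = \bigcap_j \gamma\text{-}\mathrm{NZD}_R\,\mathrm{C}^R_{\langle j\rangle}M$, with the non-emptiness of the finite intersection coming from the Zariski-openness of each set and the infiniteness of $K$. Your plan never confronts why a single linear form can be made to work for all degree components at once; a ``generic $\ell$'' heuristic alone does not produce that conclusion---one needs the $\mathfrak{m}$-full-type characterization of $\gamma$-regularity together with the openness argument.

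There is also a mismatch in the direction $(2)\Rightarrow(1)$: you propose to run the induction on $M/\ell_1 M$, get componentwise linearity there, and then ``lift'' it back to $M$ via a mapping-cone or long-exact-sequence argument. In the paper, no such lifting takes place. In the single-degree case the implication $(2)\Rightarrow(1)$ is a direct regularity computation (Corollary 4.5(1): a full $\gamma$-regular sequence forces $\mathrm{reg}_R\,\mathrm{Syz}_1^R M = \mathrm{deg}(\mathrm{Syz}_1^R M\otimes k)$), not an inductive lift; the induction appears only in the opposite direction $(1)\Rightarrow(2)$. The lifting step you describe as ``the technical core'' is exactly the step the paper avoids by reorganizing the proof around $\mathrm{C}^R_{\langle j\rangle}M$ and $\hat\gamma$-sequences, and as stated it would require an unproved claim that $\gamma$-regular elements kill the connecting $\mathrm{Tor}$ maps in every internal degree simultaneously---which is precisely the content the paper packages into the $\hat\gamma$-depth machinery rather than asserting directly.
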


This is a generalization of the following theorem in \cite{harima2015completely}.

\begin{theorem}[Harima and Watanabe]\label{Th} 
Let $K$ be an infinite field and $I$ be a homogeneous ideal in the polynomial ring $R=K\lbrack x_1, \cdots, x_n \rbrack $. Then the following are equivalent:
\begin{itemize}
\item[(1)] 
$I$ is a componetwise linear ideal.
\item[(2)] 
$I$ is a completely $\mathfrak{m}$-full ideal.
\end{itemize}
\end{theorem}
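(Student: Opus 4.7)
The plan is to deduce the Harima--Watanabe theorem as a direct consequence of the main theorem stated above, applied to the cyclic module $M = R/I$. The proof requires two translations.

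First, I would observe that the first syzygy of $R/I$ is exactly $I$: from the tautological short exact sequence $0 \to I \to R \to R/I \to 0$ one reads off $\Omega_1(R/I) = I$ as a graded $R$-module. Consequently, condition (1) of the main theorem applied to $M = R/I$ --- ``$R/I$ has a componentwise linear first syzygy'' --- is literally the condition that $I$ be a componentwise linear ideal, which is Harima--Watanabe's condition (1). This reduction requires no further work.

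The substantive step is to translate condition (2) of the main theorem into the completely $\mathfrak{m}$-full condition. I would proceed by induction on $n$, matching the iterative unfolding of $\gamma$-$\mathrm{depth}$ against the recursive definition of completely $\mathfrak{m}$-full. The crucial ingredient is the equivalence: for a generic linear form $y \in R_1$, $y$ is $\gamma$-regular on $R/I$ if and only if $\mathfrak{m}I : y = I$ (Watanabe's $\mathfrak{m}$-full condition for the ideal $I$ with respect to $y$). Granted this, one passes to $R' = R/(y)$ with the induced ideal $I' = (I + (y))/(y)$, noting that $(R/I)/y(R/I) = R'/I'$, and applies the inductive hypothesis to conclude that $\gamma$-$\mathrm{depth}_{R'}(R'/I') = n - 1$ matches $I'$ being completely $\mathfrak{m}$-full in $R'$. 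The base case $n = 0$ is vacuous on both sides.

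The main obstacle is establishing the equivalence ``$y$ is $\gamma$-regular on $R/I$ if and only if $\mathfrak{m}I : y = I$''. The $\mathfrak{m}$-full condition is a compact colon equation, whereas the $\gamma$-regular condition --- being designed for general modules --- a priori involves both almost-regularity of $y$ on $R/I$ and a further syzygy-theoretic ``good'' condition comparing $\Omega_1(R/I) = I$ with $\Omega_1(R'/I') = I'$. I would unpack this using the standard characterization that $I$ is $\mathfrak{m}$-full with respect to $y$ if and only if $\mu(I) = \mu(I') + \dim_K (I:y)/I$, where the last summand is the finite-length correction coming from almost-regularity. Verifying that the ``good'' portion of the $\gamma$-regular definition collapses exactly to this numerical equality in the cyclic case is where the bulk of the work lies; once established, the induction transports the entire recursive unfolding on one side to the other uniformly, and Harima--Watanabe's theorem drops out.
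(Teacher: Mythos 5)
Your proposal is correct and is precisely the specialization the paper has in mind: the paper does not reprove this statement (it is quoted from Harima--Watanabe as the result being generalized), but presents it as the case $M=R/I$ of the main theorem, where $\mathrm{Syz}_1^R(R/I)\simeq I$ gives the translation of condition (1) exactly as you say. Moreover, the bridge you single out as the main obstacle---$z$ is $\gamma$-regular for $R/I$ if and only if $\mathfrak{m}I\underset{R}{:}z=I$, equivalently $\beta_1^R(R/I)=\alpha(R/I;z)+\beta_1^{\overline{R}}\bigl(\overline{R}/I\overline{R}\bigr)$---is exactly Lemma \ref{lem_r-reg-cr}\,(2) ((i)$\Leftrightarrow$(ii)$\Leftrightarrow$(iv)) combined with Lemma \ref{lem_m-full}, applied with $F_M=R$ and $\mathrm{Syz}_1^RM=I$, so the ``bulk of the work'' you anticipate is already supplied by the paper, and your recursive matching of $\gamma$-regular sequences for $R/I$ against the recursive definition of completely $\mathfrak{m}$-full ideals then goes through as described.
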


The paper is organized as follows. In Section \ref{sec_cwl}, we fix the notation.
From Section \ref{sec_r-reg} to  \ref{sec_rhat_revisited}, we study the properties of $\gamma$-regular sequences,
 and provide the proof of our main theorem in Section \ref{sec_rhat_revisited}. 
 In Section \ref{sec_fdim}, we study finite dimensional modules with linear syzygies.
 In Section \ref{sec_2var}, we study modules with linear syzygies over a polynomial ring in two variables.


The related topics such as Koszul algebra, Koszul modules, and componentwise linear ideals were referred to  \cite{avramov1992regularity}, \cite{conca2010integrally}, \cite{herzog2005koszul}, \cite{iyengar2009linearity}, 
and \cite{lu2013componentwise}.

%
%

\section{Componentwise linear modules}
\label{sec_cwl}

Throughout, let denote $R=K\lbrack x_1, \cdots, x_n \rbrack $ be the polynomial ring over a infinite field $K$ in $n$ variables with the standard grading, 
$\mathfrak{m}=(x_1, \cdots, x_n)$ the graded maximal ideal, $k=R/\mathfrak{m}$ the residue field and $M$ a fintely generated graded module over $R$.  
$\mathbb{Z}$ denotes the set of integers.
\\

For convenience, we introduce several notations below, most of which are standard, with a few exceptions.

\begin{notation}\label{n-1}  Let $N$ be a graded submodule of $M$ and $ z\in R_1\setminus\{0\}$ be a nonzero linear element.
\begin{enumerate}[label=(\arabic*)]
\item 
$ \mathrm{deg}\,M=\mathrm{max}\{j\in\mathbb{Z}\mid M_j\neq0\}$, and in particular, $ \mathrm{deg}\,0=-\infty$.
\item 
$ \mathrm{indeg}\,M=\mathrm{min}\{j\in\mathbb{Z}\mid M_j\neq0\}$, and in particular, $ \mathrm{indeg}\,0=\infty$.
\item 
$ \mathbf{deg}\,M=\{j\in\mathbb{Z}\mid M_j\neq0\}$ the degree support of $M$.
\item
$M_{\langle j \rangle}=RM_j$  the submodule of $M$ generated by elements of degree $j\in\mathbb{Z}$,  and we call $M_{\langle j \rangle}$ the $j$th component of $M$.
\item
$ M_{\geq d}=\bigoplus_{i\geq d}M_i$,  $ M_{\leq d}=\bigoplus_{i\leq d}M_i$ for $ d\in\mathbb{Z}$.
\item
$ N\underset{M}:z=\left\{\xi\in M \mid  z\xi\in N\right\}$.
\item
$\mathrm{p}_{R}\left(M\right)(t)=\sum\limits_{i\geq0}\beta_{i}^R(M)t^i$ the Poincare series of $M$,\\
where $\beta_{i}^R(M)=\dim_{K}\mathrm{Tor}_i^R(M,k)$ the $i$th betti number of $M$.
\item
$\mathrm{P}_{R}\left(M\right)(t,u)=\sum\limits_{i\geq0, j\in\mathbb{Z}}\beta_{ij}^R(M)t^iu^j$ the graded Poincare series of $M$,\\
where $\beta_{ij}^R(M)=\dim_{K}\mathrm{Tor}_i^R(M,k)_{j}$ the graded betti number of $M$.
\item
$\mathrm{reg}_RM=\max\{j-i \mid \beta_{ij}^R(M)\neq 0 \}$ the regularity of $ M$, \\
and in particular, $ \mathrm{reg}_R\,0=-\infty$.
\item
$\mathrm{H}_M(u)=\sum\limits_{j\in\mathbb{Z}}\left(\dim_KM_j\right)u^j$ the Hilbert series of $M$.
\item
$\mathrm{Syz}_1^RM=\mathrm{Ker}(\pi:F_M\to M)$ the first syzygy of $M$, where $\pi$ is a minimal graded free cover of $M$.
\end{enumerate}
\end{notation}

\begin{remark}\label{rem_syz} 
A minimal graded free cover $\pi:F_M\to M$ is unique up to isomorphism. More precisely, let $\pi':F'\to M$ 
be another minimal graded free cover of $M$. 
Then, there exist vertical isomorphisms that make the following diagram, with exact rows, commutative:
$$
\begin{tikzcd}[row sep=0.6cm,column sep=0.6cm]
0\arrow[r]&\mathrm{Syz}_1^RM \arrow[d]\arrow[r] & F_M \arrow[d,,"\theta"]\arrow[r, "\pi"]   & M \arrow[d, equal]\arrow[r] & 0\;\\
0\arrow[r]&\mathrm{Ker}\pi'                      \arrow[r] & F'                          \arrow[r, "\pi' "] & M                          \arrow[r] & 0.
\end{tikzcd}
$$
In fact, we can choose the morphism $\theta$ so that the right square of the above diagram commutes, since $F_M$ is a projective module.
Then, $\theta$ is automatically an isomorphism because, by applying $\underbar{\hspace{7pt}}\otimes k$ to the right square of the above diagram, we see that $\mathrm{Coker}\,\theta=0$ by the graded Nakayama's lemma. Thus, $\theta$ can be viewed as a surjective endomorphism of the Noetherian module $F_M$ since $F_M \simeq F'$. Hence, $\theta$ is an isomorphism.
\end{remark}

\begin{definition}\label{def_cwl}
We say that $M$ is a {\it componentwise linear} module if the following condition holds:
$$\mathrm{reg}_R M_{\langle j \rangle} \leq j \text{ for all } j\in \mathbb{Z}.$$
\end{definition}

\begin{remark}\label{rem_cwl} 
Note that $ \mathrm{reg}_R\,0=-\infty$. We count the zero module as componentwise linear module.
Definition 2.3 is equivalently saying that $M$ is componentwise linear 
if and only if $\mathrm{reg}_R M_{\langle j \rangle} = j$ for all $j\in \mathbb{Z}$ with $ M_{\langle j \rangle} \neq 0$.
\end{remark}
%
%
\section{$\gamma$-regular elements}
\label{sec_r-reg}

In this section, we introduce $\gamma$-regular elements for a finitely generated graded module $M$.
Before going into that, we recall the definition of almost regular elements from Section 1 of \cite{aramova2000almost}.

\begin{definition}\label{def_almost-reg} 
Let $ z\in R_1\setminus\{0\}$  be a nonzero linear element.
We call that $z$ is an {\it almost} $M$-{\it regular} element if $\dim_K\left(0\underset{M}:z\right)<\infty$.
\end{definition}

\begin{notation}\label{n-2}$\quad$
\begin{itemize}
\item[(1)] 
$ \mathrm{NZD} _RM=\{z\in R_1\setminus \{0\}| 0\underset{M}:z=0\}$.
\item[(2)] 
$ \Gamma_{\mathfrak{m}}(M)=\{\xi\in M| \mathfrak{m}^i\xi=0 \text{ for some }i\geq0\}$ 0th local cohomology of $M$.
\item[(3)] 
For each $j\in\mathbb{Z}$, $M(j)$ denotes the $j$-th shift of M, i.e., $$M(j)_i=M_{i+j}\text{ for }i\in\mathbb{Z}.$$
\end{itemize}
\end{notation}

\begin{remark}\label{rem_almost-reg} We note that $\mathrm{NZD}_R\,0=R_1\setminus\{0\}$ by definition and
an element  $z\in R_1\setminus\{0\}$ is almost $M$-regular if and only if $z\in \mathrm{NZD}_R\left(M/\Gamma_{\mathfrak{m}}(M)\right)$.
In fact, applying the snake lemma for the following commutative diagram with exact rows:
$$
\begin{tikzcd}[row sep=0.6cm,column sep=0.6cm]
0\arrow[r]&\Gamma_{\mathfrak{m}}(M) \arrow[d,"\times z"]\arrow[r] & M \arrow[d,"\times z"]\arrow[r] & M/\Gamma_{\mathfrak{m}}(M) \arrow[d, "\times z"]\arrow[r] & 0\;\\
0\arrow[r]&\Gamma_{\mathfrak{m}}(M)(1)                               \arrow[r] & M(1)                                \arrow[r]& \left(M/\Gamma_{\mathfrak{m}}(M)\right)(1)                                \arrow[r] & 0,
\end{tikzcd}
$$
we have the following exact sequence:
$$
0\to 0\underset{\Gamma_{\mathfrak{m}}(M)}:z \to 0\underset{M}:z \to X
\to \left(\Gamma_{\mathfrak{m}}(M)/z\Gamma_{\mathfrak{m}}(M)\right)(1),
$$
where we denote $X= 0\underset{M/\Gamma_{\mathfrak{m}}(M)}:z$. \\
Here, we note that $X=0$ if and only if $z\in \mathrm{NZD}_R\left(M/\Gamma_{\mathfrak{m}}(M)\right)$.
Using the above exact sequence, if $z\in \mathrm{NZD}_R\left(M/\Gamma_{\mathfrak{m}}(M)\right)$, then 
$$0\underset{\Gamma_{\mathfrak{m}}(M)}:z = 0\underset{M}:z <\infty.$$ 
On the other hand, if  $\dim_K\left(0\underset{M}:z\right) <\infty$, 
then $M/\Gamma_{\mathfrak{m}}(M)$ contains $X$ as a submodule of finite length. However, we note that  $ M/\Gamma_{\mathfrak{m}}(M)$ has no non-zero finite length submodule. Thus, we conclude that $X=0$.
\end{remark}

\begin{definition}\label{def_r-reg} 
Let $z\in R_1\setminus\{0\}$  be a nonzero linear element.
We call that $z$ is an $M$-$\gamma$-{\it regular element} or $\gamma$-{\it regular element} for $M$ if the following condition holds:
$$\rho=\rho_R^z(M):\mathrm{Tor}_1^R(M,R/zR) \to \mathrm{Tor}_1^R(M,k) \text{ is injective,}$$
where the map $\rho$ is induced by the canonical surjection $R/zR\to k$.
\end{definition}

\begin{notation}\label{n-3}$\quad$
\begin{itemize}
\item[(1)] 
$\gamma$-$\mathrm{NZD}_RM=\left\{ z \in  R_1\setminus\{0\} \mid z \text{ is an } M\text{-}\gamma\text{-regular element}\right\}$.
\item[(2)] 
$ \mathrm{soc}M=0\underset{M}:\mathfrak{m}=\{\xi\in M| \mathfrak{m}\xi=0\}$ the socle of $M$.
\item[(3)] 
$ \alpha(M;z)=\dim_K\left(0\underset{M}:z\right)$.
\end{itemize}
\end{notation}

If $z\in \gamma\text{-}\mathrm{NZD}_R M$, then $\dim_K\left(0\underset{M}:z\right)<\infty$ 
since 
$$\mathrm{Tor}_1^R(M,R/zR)\simeq \mathrm{Ker}(M(-1)\overset{\times z}{\rightarrow} M) \simeq  \left(0\underset{M}:z\right)(-1).$$
Hence $M$-$\gamma$-regular element is an almost  $M$-regular element.

\begin{remark}\label{rem_r-reg} The following hold:
\begin{itemize}
\item[(1)] 
$\gamma$-$\mathrm{NZD}_R\,0=R_1\setminus\{0\}$ by definition.
\item[(2)] 
If $R=K$, i.e., $n=0$, then $\gamma$-$\mathrm{NZD}_R\,M=\emptyset$ for any graded module $M$  since $R_1=K_1=\emptyset$.
\item[(3)] 
Since $\mathrm{Tor}_1^R(M,k)$ is a $k$-vector space, for $z\in \gamma\text{-}\mathrm{NZD}_R M$, \\
$\rho:\mathrm{Tor}_1^R(M,R/zR) \to \mathrm{Tor}_1^R(M,k)$ is a split injective map.
\item[(4)]
$\mathrm{Tor}_1^R(M,R/zR)\simeq \mathrm{soc}M(-1)$ for $z\in \gamma\text{-}\mathrm{NZD}_R M$.
\item[(5)] 
$\mathrm{reg}_R\, \mathrm{soc}M=\mathrm{deg}\, \mathrm{soc}M$ 
since $\mathrm{reg}_R\,  k(-j)=j$ for $j\in \mathbb{Z}$.
\item[(6)] 
$\mathrm{NZD}_R M\subseteq \gamma\text{-}\mathrm{NZD}_R M\subseteq \mathrm{NZD}_R\left(M/\Gamma_{\mathfrak{m}}(M)\right)$.
\end{itemize}
\end{remark}

The advantage of having $\gamma$-regular elements is that the following splitting lemma holds.

\begin{lemma}\label{lemA_splitting} 
We denote $\overline{R}=R/zR$ for $z \in R_1\setminus\{0\}$.\\
If $z\in \gamma\text{-}\mathrm{NZD}M$, then  the following hold:
\begin{itemize}
\item[(1)] 
$\mathrm{Syz}_1^RM\otimes\overline{R}\simeq\mathrm{Syz}_1^{\overline{R}}\overline{M}\oplus\mathrm{soc}M(-1)$.
\item[(2)] 
$ \beta_{ij}^R\left(\mathrm{Syz}_1^RM\right)= \beta_{ij}^{\overline{R}}\left(\mathrm{Syz}_1^{\overline{R}}\overline{M}\right)+ \beta_{ij}^{\overline{R}}\left( \mathrm{soc}M(-1) \right)$ for $ i\geq0$ and $ j\in \mathbb{Z}$.
\item[(3)] 
$ \mathrm{P}_{R}\left(\mathrm{Syz}_1^RM\right)(t,u)=\mathrm{P}_{\overline{R}}\left(\mathrm{Syz}_1^{\overline{R}}\overline{M}\right)(t,u)+u\mathrm{P}_{\overline{R}}\left(\mathrm{soc}M\right)(t,u)$.
\item[(4)] 
$\mathrm{P}_{R}\left(M\right)(t,u)=\mathrm{P}_{\overline{R}}\left(\overline{M}\right)(t,u)+tu\mathrm{P}_{\overline{R}}\left(\mathrm{soc}M\right)(t,u)$.
\item[(5)] 
$ \mathrm{p}_{R}\left(\mathrm{Syz}_1^RM\right)(t)=\mathrm{p}_{\overline{R}}\left(\mathrm{Syz}_1^{\overline{R}}\overline{M}\right)(t)+\alpha(M;z)(1+t)^{n-1}$.
\item[(6)] 
$ \mathrm{p}_{R}\left(M\right)(t)=\mathrm{p}_{\overline{R}}\left(\overline{M}\right)(t)+\alpha(M;z)t(1+t)^{n-1}$.
\item[(7)] 
$\mathbf{deg}\mathrm(\mathrm{soc}M(-1))\subseteq \mathbf{deg}\,(\mathrm{Syz}_1^RM\otimes k) $.
\item[(8)] 
$ \mathrm{reg}_R\left(\mathrm{Syz}_1^RM\right)=\mathrm{max}\left\{\mathrm{reg}_{\overline{R}}\left(\mathrm{Syz}_1^{\overline{R}}\overline{M}\right),\;\mathrm{deg}\,(\mathrm{Syz}_1^RM\otimes k)\right\}$.
\end{itemize}
\end{lemma}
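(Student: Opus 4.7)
My plan is to concentrate the argument on item (1); once the direct-sum decomposition is established, items (2)--(8) follow by standard manipulations (tensoring with $k$, extracting Poincar\'e series, and comparing degree supports).

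For (1), I apply $-\otimes_R\overline{R}$ to the defining short exact sequence $0\to \mathrm{Syz}_1^R M\to F_M\to M\to 0$ for the minimal graded free cover $F_M$. Since $\mathrm{Tor}_1^R(F_M,\overline{R})=0$, this produces the four-term exact sequence
$$0\to \mathrm{Tor}_1^R(M,\overline{R}) \to \mathrm{Syz}_1^R M\otimes_R\overline{R}\to F_M\otimes_R\overline{R}\to \overline{M}\to 0.$$
The induced map $F_M\otimes\overline{R}\to \overline{M}$ is a minimal graded free $\overline{R}$-cover (the entries of the differential still lie in $\mathfrak{m}$), so by the uniqueness in Remark \ref{rem_syz} its kernel is isomorphic to $\mathrm{Syz}_1^{\overline{R}}\overline{M}$. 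Extracting this kernel and using $\mathrm{Tor}_1^R(M,\overline{R})\simeq \mathrm{soc}\,M(-1)$ from Remark \ref{rem_r-reg}(4), I obtain a short exact sequence
$$0\to \mathrm{soc}\,M(-1)\to \mathrm{Syz}_1^R M\otimes_R\overline{R}\to \mathrm{Syz}_1^{\overline{R}}\overline{M}\to 0. \quad (\ast)$$
To split $(\ast)$, note that composing its left inclusion with the canonical surjection $\mathrm{Syz}_1^R M\otimes\overline{R}\twoheadrightarrow \mathrm{Syz}_1^R M\otimes k=\mathrm{Tor}_1^R(M,k)$ recovers the map $\rho$ of Definition \ref{def_r-reg}, which is split injective by Remark \ref{rem_r-reg}(3). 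Since $\mathrm{soc}\,M(-1)$ is annihilated by $\mathfrak{m}\overline{R}$, any $k$-linear retraction of $\rho$ is automatically $\overline{R}$-linear, and composing it with the above surjection provides the desired $\overline{R}$-linear splitting of $(\ast)$, proving (1).

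For (2), I will use that $z$ is a non-zero divisor on $\mathrm{Syz}_1^R M$ (snake lemma applied to multiplication by $z$, using $F_M$ free), which yields the change-of-rings identity $\beta_{ij}^R(\mathrm{Syz}_1^R M)=\beta_{ij}^{\overline{R}}(\mathrm{Syz}_1^R M\otimes\overline{R})$, and then combine it with additivity of Betti numbers along the decomposition in (1). Item (3) is (2) repackaged as a graded Poincar\'e series. Item (4) follows from (3) via the standard recursion $\mathrm{P}_R(M)(t,u)=\mathrm{P}_R(F_M)(t,u)+t\,\mathrm{P}_R(\mathrm{Syz}_1^R M)(t,u)$ and its $\overline{R}$-analogue for $\overline{M}$, cancelling the common free-cover term (the generator degrees of $F_M$ and $F_M\otimes\overline{R}$ coincide). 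Items (5) and (6) are the $u=1$ specializations, using $\mathrm{p}_{\overline{R}}(k)(t)=(1+t)^{n-1}$ (Koszul over $\overline{R}$) and $\dim_K\mathrm{soc}\,M=\alpha(M;z)$ (consequence of $\gamma$-regularity, since then $0:_M z=\mathrm{soc}\,M$). Item (7) is immediate from the degree-preserving injection $\rho$ applied to the copy of $\mathrm{soc}\,M(-1)$. For (8), I combine (2) with $\mathrm{reg}_{\overline{R}}\mathrm{soc}\,M(-1)=\mathrm{deg}\,\mathrm{soc}\,M(-1)$ (Remark \ref{rem_r-reg}(5)) and absorb the latter into $\mathrm{deg}(\mathrm{Syz}_1^R M\otimes k)$ via (7).

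The principal obstacle is upgrading the $k$-linear splitting of $\rho$ to an $\overline{R}$-linear splitting of $(\ast)$; the decisive observation is that the submodule $\mathrm{soc}\,M(-1)$ is already a $k$-vector space, which collapses the distinction between $k$-linear and $\overline{R}$-linear maps out of it. All remaining items are bookkeeping once (1) is in hand.
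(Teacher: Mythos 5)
Your proposal is correct and follows essentially the same route as the paper: tensor the defining short exact sequence of $\mathrm{Syz}_1^RM$ with $\overline{R}$, identify the kernel of $F_M\otimes\overline R\to\overline M$ with $\mathrm{Syz}_1^{\overline R}\overline M$, identify $\mathrm{Tor}_1^R(M,\overline R)$ with $\mathrm{soc}\,M(-1)$, and split the resulting short exact sequence by observing that $\iota$ becomes $\rho$ after composing with $\mathrm{Syz}_1^RM\otimes\overline R\twoheadrightarrow\mathrm{Syz}_1^RM\otimes k$. The one step you spell out in more detail than the paper --- that a $k$-linear retraction of $\rho$ is automatically $\overline R$-linear because both source and target are killed by $\mathfrak{m}$, so that composing with the surjection onto $\mathrm{Syz}_1^RM\otimes k$ yields the desired $\overline R$-linear splitting of $\iota$ --- is the content implicitly behind the paper's terse ``$\iota$ is split since $\eta\circ\iota=\theta\circ\rho$ is a split $k$-linear map,'' and your other items (2)--(8) match the paper's bookkeeping exactly.
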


Before proving Lemma \ref{lemA_splitting} above, we make the following observation:

\begin{observation}\label{obs_lemA} 
Let $ \pi:F_M\to M$ be a minimal graded free cover of $ M$ and denote $\overline{R}=R/zR$ for $z \in R_1\setminus\{0\}$.
There is the following exact sequence:
 $$
\begin{tikzcd}[row sep=0.6cm,column sep=0.6cm]
0\arrow[r]&\mathrm{Tor}_1^R(M,\overline{R}) \arrow[d,"\rho"]\arrow[r,"\iota"] & \mathrm{Syz}_1^RM\otimes \overline{R}\arrow[d,,"\eta"]\arrow[r]   & \mathrm{Syz}_1^{\overline{R}}\overline{M} \arrow[r] & 0\\
               &\mathrm{Tor}_1^R(M,k)                  \arrow[r,"\sim","\theta" ']                                   & \mathrm{Syz}_1^RM\otimes k.
\end{tikzcd}
$$
 which is induced as follows:\\
Applying tensor product  functors $\underline{\hspace{6pt}}\otimes\overline{R}$ and $\underline{\hspace{6pt}}\otimes k$ to the following exact sequence:
      $$ 0\to\mathrm{Syz}_1^RM\to F_M \overset{\pi}\longrightarrow M\to 0,$$
 we obtain the following commutative diagram, where the first row is exact:
  $$
\begin{tikzcd}[row sep=0.6cm,column sep=0.6cm]
0\arrow[r]&\mathrm{Tor}_1^R(M,\overline{R}) \arrow[d,"\rho"]\arrow[r,"\iota"] & \mathrm{Syz}_1^RM\otimes \overline{R}\arrow[d,,"\eta"]\arrow[r] 
& F_M\otimes\overline{R} \arrow[r,"\pi\otimes\overline{R}"] & M\otimes\overline{R} \arrow[r] & 0\;\\
               &\mathrm{Tor}_1^R(M,k)                  \arrow[r,"\theta"]                                   & \mathrm{Syz}_1^RM\otimes k.
\end{tikzcd}
$$
In the above diagram, we note the following:\\
 (i) $ \theta$ is an isomorphism.\\
 (ii) $ \mathrm{Ker}\left(\pi\otimes\overline{R}\right)\simeq \dfrac{\mathrm{Syz}_1^RM+zF_M}{zF_M}\simeq \mathrm{Syz}_1^{\overline{R}}\overline{M}$.
 \end{observation}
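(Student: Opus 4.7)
The plan is to leverage Observation \ref{obs_lemA}, which already supplies the short exact sequence
$$0 \to \mathrm{Tor}_1^R(M,\overline{R}) \xrightarrow{\iota} \mathrm{Syz}_1^RM \otimes \overline{R} \to \mathrm{Syz}_1^{\overline{R}}\overline{M} \to 0$$
and the factorization $\eta\iota = \theta\rho$, with $\theta$ an isomorphism and $\rho$ injective by hypothesis. For (1), Remark \ref{rem_r-reg}(4) identifies $\mathrm{Tor}_1^R(M,\overline{R})$ with $\mathrm{soc}M(-1)$, which is annihilated by $\mathfrak{m}$ and hence a graded $k$-vector space. Thus $\rho$ is a split injection of graded $k$-vector spaces (Remark \ref{rem_r-reg}(3)); choosing a graded $k$-linear retraction $\sigma$ of $\rho$, the composite $\sigma\circ\theta^{-1}\circ\eta$ is automatically $\overline{R}$-linear (the target being a $k$-module) and satisfies $\sigma\theta^{-1}\eta\iota = \sigma\rho = \mathrm{id}$, which splits the sequence and yields (1).

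For (2), $\mathrm{Syz}_1^RM$ embeds in the free module $F_M$, so $z$ is a non-zero-divisor on it; consequently $\mathrm{Tor}_i^R(\mathrm{Syz}_1^RM,\overline{R}) = 0$ for all $i \geq 1$, and tensoring a minimal $R$-free resolution of $\mathrm{Syz}_1^RM$ with $\overline{R}$ gives a minimal $\overline{R}$-free resolution of $\mathrm{Syz}_1^RM\otimes\overline{R}$. This yields $\beta_{ij}^R(\mathrm{Syz}_1^RM) = \beta_{ij}^{\overline{R}}(\mathrm{Syz}_1^RM\otimes\overline{R})$, and substituting the splitting from (1) gives (2); part (3) is the generating-series repackaging of (2). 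For (5), after a generic choice of $z$ (using $K$ infinite) $\overline{R}$ is a polynomial ring in $n-1$ variables, so the Koszul resolution gives $\mathrm{p}_{\overline{R}}(k)(t) = (1+t)^{n-1}$; since $\mathrm{soc}M$ is a direct sum of shifts of $k$ of total $K$-dimension $\dim_K(0:_M z) = \alpha(M;z)$ (using Remark \ref{rem_r-reg}(4) together with the shift isomorphism $\mathrm{Tor}_1^R(M,\overline{R}) \simeq (0:_M z)(-1)$), setting $u=1$ in (3) produces (5).

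For (4), apply the syzygy shift identity
$$\mathrm{P}_R(M)(t,u) = \mathrm{H}_{M/\mathfrak{m}M}(u) + t\,\mathrm{P}_R(\mathrm{Syz}_1^RM)(t,u),$$
which follows from $\mathrm{Tor}_1^R(M,k) \simeq \mathrm{Syz}_1^RM \otimes k$ (minimality of $\pi$) and $\mathrm{Tor}_{i+1}^R(M,k) \simeq \mathrm{Tor}_i^R(\mathrm{Syz}_1^RM,k)$ for $i \geq 1$, over both $R$ and $\overline{R}$; using $\mathrm{H}_{M/\mathfrak{m}M}(u) = \mathrm{H}_{\overline{M}/\mathfrak{m}\overline{M}}(u)$ (since $zR \subseteq \mathfrak{m}$) and substituting (3) gives (4), and $u=1$ then produces (6). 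Part (7) is immediate: $\theta^{-1}\circ\rho$ realizes $\mathrm{soc}M(-1) \simeq \mathrm{Tor}_1^R(M,\overline{R})$ as a graded $k$-submodule of $\mathrm{Syz}_1^RM \otimes k$. For (8), part (2) yields
$$\mathrm{reg}_R\mathrm{Syz}_1^RM = \max\{\mathrm{reg}_{\overline{R}}\mathrm{Syz}_1^{\overline{R}}\overline{M},\ \mathrm{reg}_{\overline{R}}\mathrm{soc}M(-1)\};$$
since $\mathrm{soc}M(-1)$ is a direct sum of shifts of $k$, the Koszul-based argument of Remark \ref{rem_r-reg}(5) applied over $\overline{R}$ gives $\mathrm{reg}_{\overline{R}}\mathrm{soc}M(-1) = \deg\mathrm{soc}M(-1)$. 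The $i=0$ strand of (2) yields $\deg(\mathrm{Syz}_1^RM \otimes k) = \max\{\deg(\mathrm{Syz}_1^{\overline{R}}\overline{M} \otimes k),\ \deg\mathrm{soc}M(-1)\}$; combined with $\deg(\mathrm{Syz}_1^{\overline{R}}\overline{M} \otimes k) \leq \mathrm{reg}_{\overline{R}}\mathrm{Syz}_1^{\overline{R}}\overline{M}$, substituting $\deg(\mathrm{Syz}_1^RM \otimes k)$ for $\deg\mathrm{soc}M(-1)$ leaves the max unchanged, which proves (8).

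The main obstacle will be the splitting in (1) — specifically verifying that a $k$-linear retraction of $\rho$ produces a genuine $\overline{R}$-linear retraction of $\iota$ via the given diagram — together with the change-of-rings computation underlying (2); once these two steps are in hand, the remaining items are formal rearrangements of graded Betti numbers and generating functions.
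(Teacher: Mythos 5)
Your proposal does not actually prove the statement it was asked to prove. The statement is Observation \ref{obs_lemA} itself: that tensoring $0\to\mathrm{Syz}_1^RM\to F_M\overset{\pi}\to M\to 0$ with $\overline{R}$ and with $k$ yields the displayed commutative diagram with exact first row, that $\theta$ is an isomorphism, and that $\mathrm{Ker}(\pi\otimes\overline{R})\simeq(\mathrm{Syz}_1^RM+zF_M)/zF_M\simeq\mathrm{Syz}_1^{\overline{R}}\overline{M}$. You open by announcing that you will \emph{leverage} Observation \ref{obs_lemA} as already supplying this exact sequence and the relation $\eta\iota=\theta\rho$ --- i.e.\ you assume precisely what is to be shown --- and then spend the entire argument proving items (1)--(8) of Lemma \ref{lemA_splitting}, which is the separate, downstream result whose proof in the paper is the one that \emph{uses} the Observation. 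As a verification of the Observation your write-up is therefore circular, and as a whole it addresses the wrong target. Note also that the Observation is stated for an arbitrary $z\in R_1\setminus\{0\}$ and involves no $\gamma$-regularity, whereas your argument is framed under the hypothesis that $\rho$ is injective.

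What actually needs checking, and appears nowhere in your text, is the following. (a) Exactness of $0\to\mathrm{Tor}_1^R(M,\overline{R})\overset{\iota}\to\mathrm{Syz}_1^RM\otimes\overline{R}\to F_M\otimes\overline{R}\to M\otimes\overline{R}\to 0$: this is the long exact Tor sequence for $\underline{\hspace{6pt}}\otimes\overline{R}$ together with $\mathrm{Tor}_1^R(F_M,\overline{R})=0$ because $F_M$ is free. (b) That $\theta:\mathrm{Tor}_1^R(M,k)\to\mathrm{Syz}_1^RM\otimes k$ is an isomorphism: here minimality of $\pi$ is essential, since $\mathrm{Syz}_1^RM\subseteq\mathfrak{m}F_M$ forces the map $\mathrm{Syz}_1^RM\otimes k\to F_M\otimes k$ to vanish, so the long exact sequence for $\underline{\hspace{6pt}}\otimes k$ identifies $\mathrm{Tor}_1^R(M,k)$ with $\mathrm{Syz}_1^RM\otimes k$ (you do invoke this fact in passing when proving item (4) of the Lemma, but there it is an input, not a conclusion about the diagram). (c) That $\mathrm{Ker}(\pi\otimes\overline{R})=(\mathrm{Syz}_1^RM+zF_M)/zF_M$ and that this kernel is $\mathrm{Syz}_1^{\overline{R}}\overline{M}$: the first part is immediate from $M\otimes\overline{R}\simeq F_M/(\mathrm{Syz}_1^RM+zF_M)$, and for the second one must observe that $F_M\otimes\overline{R}\to\overline{M}$ is again a \emph{minimal} graded free cover over $\overline{R}$ (its kernel lies in $\mathfrak{m}(F_M\otimes\overline{R})$ precisely because $\mathrm{Syz}_1^RM\subseteq\mathfrak{m}F_M$) and then invoke uniqueness of minimal covers as in Remark \ref{rem_syz}. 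None of these points follows from the splitting and Betti-number computations you carried out; conversely, those computations cannot be credited here because they presuppose the Observation. You should supply (a)--(c) directly; they constitute the entire content of the statement.
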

 
Using the above Observation \ref{obs_lemA}, we prove Lemma  \ref{lemA_splitting}.

\begin{proof}[Proof of Lemma \ref{lemA_splitting}]
(1) Here, we note that $ \mathrm{Tor}_1^R(M,\overline{R})\simeq\mathrm{soc}M(-1)$ by Remark $\ref{rem_r-reg}$ (4). 
Using the commutative diagram of Observation $\ref{obs_lemA}$, $ \iota$ is split since $ \eta\circ\iota=\theta\circ\rho$ is a split $k\text{-linear}$ map. \\
(2) We note that $ \mathrm{Tor}_i^{R}(\mathrm{Syz}_1^RM,\overline{R})=0$ for all integer $ i\geq1$, since $ z$ is a regular element for  $ \mathrm{Syz}_1^RM\subseteq F_M$.
Using (1),  we have the following:
 $$ \beta_{ij}^R\left(\mathrm{Syz}_1^RM\right)=  \beta_{ij}^{\overline{R}}\left(\mathrm{Syz}_1^{R}M\otimes\overline{R}\right)=\beta_{ij}^{\overline{R}}\left(\mathrm{Syz}_1^{\overline{R}}\overline{M}\right)+ \beta_{ij}^{\overline{R}}\left( \mathrm{soc}M(-1) \right).$$
(3) follows directly from (2). (4) follows from (3) since
 $$
 \mathrm{P}_{\overline{R}}\left(\overline{M}\right)(t,u)=\mathrm{H}_{M\otimes k}(u)+t\mathrm{P}_{\overline{R}}\left(\mathrm{Syz}_1^{\overline{R}}\overline{M}\right)(t,u).
 $$
(5) follows from (3), and (6) follows from (4), respectively since
 $$
 \mathrm{p}_{\overline{R}}\left(\mathrm{soc}M\right)(t)=\alpha(M;z)(1+t)^{n-1}.
 $$
(7) follows from the following inclusion:
$$
\mathrm{soc}M(-1)\simeq \mathrm{Tor}_1^R(M, \overline{R})\hookrightarrow
\mathrm{Tor}_1^R(M, k)\simeq \mathrm{Syz}_1^RM\otimes k
$$
(8) From (2) and  Remark \ref{rem_r-reg} (5),  we have
  $$ \mathrm{reg}_R\left(\mathrm{Syz}_1^RM\right)=\mathrm{max}\left\{\mathrm{reg}_{\overline{R}}\left(\mathrm{Syz}_1^{\overline{R}}\overline{M}\right),\;\mathrm{deg}\mathrm(\mathrm{soc}M(-1))\right\}.$$
Here, we can replace $\mathrm{deg}\mathrm(\mathrm{soc}M(-1))$ by $\mathrm{deg}\,(\mathrm{Syz}_1^RM\otimes k)$ since
 $$
 \mathrm{deg}\mathrm(\mathrm{soc}M(-1)) \leq \mathrm{deg}\,(\mathrm{Syz}_1^RM\otimes k) \leq \mathrm{reg}_R\left(\mathrm{Syz}_1^RM\right).
 $$
\end{proof}

\begin{remark}\label{rem_kos-split}
We consider the following conditions:
\begin{itemize}
\item[(I)] $R$ is a standard graded Noetherian commutative $K$-algebra.
\item[(II)] $R$ is a standard graded Noetherian commutative Koszul $K$-algebra, i.e., $\mathrm{reg}_Rk=0$.
\end{itemize}
The following hold:
\item[(1)] 
Under condition (I),
if $z\in \gamma\text{-}\mathrm{NZD}M\cap\mathrm{NZD}R$, 
then statements (1) to (4) in Lemma \ref{lemA_splitting} remain valid.
\item[(2)]
Under condition (I),
if $z\in \mathrm{NZD}R$, then  $z\in \gamma\text{-}\mathrm{NZD}k$, and
$R$ is a Koszul algebra if and only if $\overline{R}=R/zR$ is a Koszul algebra, 
since the following holds:
$$
\mathrm{P}_{R}\left(k\right)(t,u)=
\mathrm{P}_{\overline{R}}\left(k\right)(t,u)+tu\mathrm{P}_{\overline{R}}\left(\mathrm{soc}k\right)(t,u)
=(1+tu)\mathrm{P}_{\overline{R}}\left(k\right)(t,u)
$$
from (1) and Lemma \ref{lemA_splitting} (4).
\item[(3)]
Under condition (II),
if $z\in \gamma\text{-}\mathrm{NZD}M\cap\mathrm{NZD}R$, 
then statements  (1) to (4) and (8) in Lemma \ref{lemA_splitting} remain valid from (1) and (2).
\end{remark}

As a consequence of the preceding Lemma  \ref{lemA_splitting} (1), we have the following proposition:

\begin{proposition}\label{propA_splitting} 
We denote $\overline{R}=R/zR$ and $\overline{M}=M\otimes \overline{R}$ for $z \in R_1\setminus\{0\}$.
If $z\in \gamma\text{-}\mathrm{NZD}M$, then  there is the following quasi-isomorphism of complexes:
$$ 
M \overset{\mathbb{L}}{\underset{R}\otimes} \overline{R}\underset{\text{qis.}}\simeq \overline{M}\lbrack0\rbrack\oplus \mathrm{soc}M(-1)\lbrack-1\rbrack, 
$$
where $ \_\overset{\mathbb{L}}\otimes_R\_$ denotes the right derived tensor product,  
and $ M\lbrack -i\rbrack$ denotes the stalk complex concentrated in homological degree $ i$ for $ i=0,1$.
\end{proposition}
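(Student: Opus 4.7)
The plan is to realize $M\overset{\mathbb{L}}{\otimes}_R \overline{R}$ as a two-term complex built from the first-syzygy inclusion and then split it directly using Lemma \ref{lemA_splitting}(1).

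First, I would apply the exact functor $-\overset{\mathbb{L}}{\otimes}_R \overline{R}$ to the short exact sequence
\[
0 \to \mathrm{Syz}_1^R M \to F_M \to M \to 0
\]
arising from the minimal graded free cover. Since $F_M$ is free, $F_M\overset{\mathbb{L}}{\otimes}_R \overline{R} = F_M \otimes_R \overline{R}$. Moreover $z$ is a nonzerodivisor on $F_M$, hence on its submodule $\mathrm{Syz}_1^R M$, so $\mathrm{Tor}_i^R(\mathrm{Syz}_1^R M,\overline{R}) = 0$ for $i\geq 1$ and $\mathrm{Syz}_1^R M \overset{\mathbb{L}}{\otimes}_R \overline{R} = \mathrm{Syz}_1^R M \otimes_R \overline{R}$. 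Consequently, $M\overset{\mathbb{L}}{\otimes}_R \overline{R}$ is quasi-isomorphic to the two-term complex
\[
C^\bullet \;=\; \bigl[\,\mathrm{Syz}_1^R M \otimes \overline{R} \xrightarrow{\alpha} F_M \otimes \overline{R}\,\bigr]
\]
concentrated in homological degrees $1$ and $0$, where $\alpha$ is induced by the inclusion $\mathrm{Syz}_1^R M \hookrightarrow F_M$.

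Next, I would invoke Lemma \ref{lemA_splitting}(1) to obtain
\[
\mathrm{Syz}_1^R M \otimes \overline{R} \;\simeq\; \mathrm{Syz}_1^{\overline{R}} \overline{M} \;\oplus\; \mathrm{soc}M(-1)
\]
as $\overline{R}$-modules. The essential point—already implicit in Observation \ref{obs_lemA}—is that under this splitting the $\mathrm{soc}M(-1)$-summand is precisely $\mathrm{Tor}_1^R(M,\overline{R}) = \ker(\alpha)$. Hence after this identification, $\alpha$ takes the matrix form $(\iota,0)$, where $\iota \colon \mathrm{Syz}_1^{\overline{R}} \overline{M} \hookrightarrow F_M \otimes \overline{R}$ is the canonical first-syzygy inclusion over $\overline{R}$.

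Because $\alpha = (\iota,0)$, the complex $C^\bullet$ decomposes at the chain-complex level as
\[
C^\bullet \;\cong\; \bigl[\,\mathrm{Syz}_1^{\overline{R}} \overline{M} \xrightarrow{\iota} F_M \otimes \overline{R}\,\bigr] \;\oplus\; \bigl[\,\mathrm{soc}M(-1) \to 0\,\bigr].
\]
The first summand has injective differential with cokernel $\overline{M}$, so it is quasi-isomorphic to $\overline{M}[0]$; the second summand is by definition $\mathrm{soc}M(-1)[-1]$. Combining these two observations gives the claimed quasi-isomorphism.

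The only delicate point is matching the module-level splitting from Lemma \ref{lemA_splitting}(1) with the kernel of $\alpha$. This compatibility is automatic from the construction of $\iota$ in Observation \ref{obs_lemA}, where $\iota$ is defined as the inclusion $\mathrm{Tor}_1^R(M,\overline{R}) \hookrightarrow \mathrm{Syz}_1^R M \otimes \overline{R}$; so there is no real obstacle beyond bookkeeping.
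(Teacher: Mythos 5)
Your proof is correct and rests on the same key input as the paper, namely the split exact sequence from Lemma \ref{lemA_splitting}(1), but the implementation is different in a way worth pointing out. The paper tensors the full minimal free resolution $F_*$ with $\overline{R}$, assembles a chain map $f\circ g\colon F_*\otimes_R\overline{R}\to\overline{M}[0]\oplus\mathrm{soc}M(-1)[-1]$, and asserts it is a quasi-isomorphism; verifying this still needs the identification of $\mathrm{Tor}_1^R(M,\overline{R})$ with the summand of $\overline{\mathrm{Syz}_1^RM}$ carried isomorphically by $f\circ g$ on $H_1$. You instead replace $M\overset{\mathbb{L}}{\otimes}_R\overline{R}$ by the two-term complex $[\mathrm{Syz}_1^RM\otimes\overline{R}\xrightarrow{\ \alpha\ }F_M\otimes\overline{R}]$, which is legitimate because $z$ is a nonzerodivisor on both $\mathrm{Syz}_1^RM$ and $F_M$, so both of their derived tensor products with $\overline{R}$ are concentrated in degree zero and $M\overset{\mathbb{L}}{\otimes}_R\overline{R}$ is the mapping cone of $\alpha$. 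You then split this complex at the chain level, using that $\ker\alpha=\mathrm{Tor}_1^R(M,\overline{R})\simeq\mathrm{soc}M(-1)$ is a direct summand of the degree-one term by Lemma \ref{lemA_splitting}(1), with complementary summand mapped by $\alpha$ isomorphically onto $\mathrm{Syz}_1^{\overline{R}}\overline{M}\subseteq F_M\otimes\overline{R}$. This makes the quasi-isomorphism manifest: one summand is the stalk complex $\mathrm{soc}M(-1)[-1]$, the other has injective differential with cokernel $\overline{M}$, and no further quasi-isomorphism check is needed; the tail $F_{\geq2}$ of the resolution also never enters. Both arguments are valid; yours is a bit more self-contained.
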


\begin{proof}
By Lemma $\ref{lemA_splitting}$ (1), we note that there is a splitting epimorphism 
$$ \overline{\mathrm{Syz}_1^RM}=\mathrm{Syz}_1^RM \otimes \overline{R}\to\mathrm{soc}M(-1).
$$
This induces a morphism of complexes 
$$ f:\overline{M}\lbrack0\rbrack\oplus \overline{\mathrm{Syz}_1^RM}\lbrack-1\rbrack\to\overline{M}\lbrack0\rbrack\oplus \mathrm{soc}M(-1)\lbrack-1\rbrack.
$$
On the other hand, there is the natural morphism of complexes 
$$ 
g:F_{*}\underset{R}\otimes \overline{R}\to\overline{M}\lbrack0\rbrack\oplus \overline{\mathrm{Syz}_1^RM}\lbrack-1\rbrack, 
$$
where $ F_{*}$ is a minimal graded free resolution of $M$. Therefore we have the following 
quasi-isomorphisms:
$$
M \overset{\mathbb{L}}{\underset{R}\otimes} \overline{R} \underset{\text{qis.}}\simeq F_{*}\underset{R}\otimes \overline{R} \overset{f\circ g}\longrightarrow\overline{M}\lbrack0\rbrack\oplus \mathrm{soc}M(-1)\lbrack-1\rbrack. 
$$ 
\end{proof}

\begin{question} \label{ques_lemA-splitting}
Can we conclude that $R_1\setminus\{0\}\ni z$ is an $M$-$\gamma$-regular elemnt if the following injective map in Observation \ref{obs_lemA} is split?
$$
\mathrm{Tor}_1^R(M,R/zR) \overset{\iota}\to \mathrm{Syz}_1^RM\otimes R/zR.
$$
\end{question}

%
%
\section{$\gamma$-regular sequences}
\label{sec_r-seq}

In this section, we introduce $\gamma$-regular sequences and describe their properties.

\begin{definition}\label{def_r-seq}
Let $ \underline z=z_1,\ldots,z_r\in R_1\setminus\{0\}$  be a sequence of nonzero linear elements.
We denote
$\overline R^{(i)}=R/(z_1,\ldots,z_i)R,\;\overline M^{(i)}=M\otimes\overline R^{(i)} $ for  $i=1,\ldots,r$.
We say that $\underline z$ is an $M\text{-}\gamma\textit{-regular sequence}$ of length $r$ if the following conditions hold: 
$$
\overline{z_i}\in \gamma\text{-}\mathrm{NZD}_{\overline R^{(i-1)}}\overline M^{(i)},
$$
where $\overline{z_i}$ is a image of $z_i$ in $\overline R^{(i-1)}$ for $i=1,\ldots,r$.
\end{definition}

\begin{notation}We use the following notation:
\begin{itemize}
\item[(1)]
$\gamma\text{-}\mathrm{depth}_R M$: The maximal length of $M$-$\gamma$-regular sequence.
\item[(2)]
$\mathrm{depth}_R M$: The maximal length of $M$-regular sequence.
\item[(3)]
$\mathrm{pd}_R M$: The projective dimension of $M$.
\end{itemize}
\end{notation}

\begin{remark}\label{rem_r-seq}The following hold:
\begin{enumerate}[label=(\arabic*)]
\item
If $\mathrm{depth}_R M\geq1$, then 
$\mathrm{NZD}_R M = \gamma\text{-}\mathrm{NZD}_R M \neq \emptyset$ 
by Remark \ref{rem_r-reg} (6).
\item
$\mathrm{depth}_R M\leq \gamma\text{-}\mathrm{depth}_R M$.
\item
If $M$ is a free module, then any linearly independent sequence $ \underline z=z_1,\ldots,z_n\in R_1\setminus\{0\}$ is an $M$-$\gamma$-regular sequence, and $\gamma\text{-}\mathrm{depth}_R M=\mathrm{depth}_R M=n$.
\item
If $M$ is a module with $\mathfrak{m}M=0$, then any linearly independent sequence $ \underline z=z_1,\ldots,z_n\in R_1\setminus\{0\}$ is an $M$-$\gamma$-regular sequence, since $M$ is isomorphic to a direct sum of copies of the residue field $k$ up to shifting, and $\gamma\text{-}\mathrm{depth}_R M=n$ whereas $\mathrm{depth}_R M=0$.
\end{enumerate}
\end{remark}

Using Lemma \ref{lemA_splitting} repeatedly, we obtain the following lemma:

\begin{lemma}\label{lem_r-seq}
Let $ z_1,\ldots,z_r$ be an $ M\text{-}\gamma\text{-}$ sequence. We denote $\overline{R}^{(i)}=R/(z_1,\ldots,z_i)R$, $\overline{M}^{(i)}=M\otimes \overline{R}^{(i)}$ for $i=1,\cdots, r$ and $\overline{M}^{(0)}=M$.  Then the following hold:
\begin{itemize}
\item[(1)]
$\overline{\mathrm{Syz}_1^R M}^{(i)}\simeq\mathrm{Syz}_1^{\overline{R}^{(i)}} \overline{M}^{(i)}\oplus {\displaystyle\bigoplus_{j=1}^{i}}\, \mathrm{soc}\overline{M}^{(i-1)}(-1)$ for $i=1,\cdots, r$.
\item[(2)] 
$ \mathrm{reg}_R\left(\mathrm{Syz}_1^RM\right)=\mathrm{max}\left\{\mathrm{reg}_{\overline{R}^{(r)}}\left(\mathrm{Syz}_1^{\overline{R}^{(r)}}\overline{M}^{(r)}\right),\;\mathrm{deg}\,(\mathrm{Syz}_1^RM\otimes k)\right\}.$
\item[(3)] $ \mathrm{p}_{R}\left( \mathrm{Syz}_1^RM \right)(t)
=\mathrm{p}_{\overline{R}^{(r)}}\left(\mathrm{Syz}_1^{\overline{R}^{(r)}}\overline{M}^{(r)}\right)(t)
+\sum\limits_{i=1}^r\alpha\left(\overline{M}^{(i-1)};z_i\right)(1+t)^{n-i}$.
\item[(4)] $ \mathrm{p}_{R}\left( M \right)(t)=\mathrm{p}_{\overline{R}^{(r)}}\left(\overline{M}^{(r)}\right)(t)+\sum\limits_{i=1}^r\alpha\left(\overline{M}^{(i-1)};z_i\right)t(1+t)^{n-i}$.
\end{itemize}
\end{lemma}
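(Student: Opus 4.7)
The plan is to prove all four statements simultaneously by induction on the length $r$ of the sequence, with Lemma \ref{lemA_splitting} providing both the base case $r=1$ (where (1)--(4) reduce verbatim to items (1), (8), (5), and (6) of that lemma) and the key tool of the inductive step. For the inductive step I let $z_1,\ldots,z_r$ be an $M$-$\gamma$-regular sequence; by the definition of $\gamma$-regular sequence, $\overline{z_2},\ldots,\overline{z_r}$ is an $\overline{M}^{(1)}$-$\gamma$-regular sequence of length $r-1$ over the $(n-1)$-variable polynomial ring $\overline{R}^{(1)}=R/z_1R$, so the inductive hypothesis applies to this shorter sequence. The strategy at each level is to peel off $z_1$ using Lemma \ref{lemA_splitting}, then invoke the inductive hypothesis on the remaining $r-1$ reductions over $\overline{R}^{(1)}$, and glue the two pieces together.

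For (1), applying Lemma \ref{lemA_splitting}(1) to $z_1$ yields
$$
\mathrm{Syz}_1^R M\otimes_R\overline{R}^{(1)}\simeq \mathrm{Syz}_1^{\overline{R}^{(1)}}\overline{M}^{(1)}\oplus \mathrm{soc}\,M(-1).
$$
Tensoring further with $\overline{R}^{(i)}$ over $\overline{R}^{(1)}$, the left side becomes $\mathrm{Syz}_1^R M\otimes_R\overline{R}^{(i)}$; the socle summand is preserved because $\mathfrak{m}\cdot\mathrm{soc}\,M=0$ forces $(z_1,\ldots,z_i)\cdot\mathrm{soc}\,M=0$; and the first summand unfolds by the inductive hypothesis into $\mathrm{Syz}_1^{\overline{R}^{(i)}}\overline{M}^{(i)}$ together with $i-1$ further socle layers $\mathrm{soc}\,\overline{M}^{(j-1)}(-1)$ for $j=2,\ldots,i$. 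Concatenating gives the desired direct sum. Part (2) then follows by applying Lemma \ref{lemA_splitting}(8) at $z_1$ and then the inductive form of (2) over $\overline{R}^{(1)}$; the two maxima consolidate into one because the one-step splitting just established forces
$$
\mathrm{deg}\,(\mathrm{Syz}_1^{\overline{R}^{(1)}}\overline{M}^{(1)}\otimes k)\le \mathrm{deg}\,(\mathrm{Syz}_1^R M\otimes k).
$$

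For (3) and (4), I iterate Lemma \ref{lemA_splitting}(5)--(6). At the $i$-th reduction the extra Poincar\'e-series contribution is $\alpha(\overline{M}^{(i-1)};z_i)(1+t)^{n-i}$ for (3) (and the same multiplied by $t$ for (4)); the exponent $n-i$ arises because $\overline{R}^{(i)}$ is a polynomial ring in $n-i$ variables, whose Koszul resolution gives $k$ the Poincar\'e factor $(1+t)^{n-i}$. Telescoping the $r$ applications yields the stated sums.

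The main difficulty is nothing deep but rather careful bookkeeping: I must check that the socle layers produced at each successive reduction can be identified as graded modules regardless of the base ring over which the socle is formed (they can, because every such socle is $\mathfrak{m}$-annihilated and therefore intrinsically a graded $k$-vector space), and that at each level the shortened sequence genuinely remains $\gamma$-regular of the correct length over the correct smaller polynomial ring. Both observations are immediate from Definition \ref{def_r-seq} and Remark \ref{rem_r-reg}, so no obstacle remains beyond this careful iteration.
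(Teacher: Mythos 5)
Your proposal is correct and takes essentially the same route as the paper: the paper's proof is simply to apply Lemma \ref{lemA_splitting} parts (1), (8), (5), (6) repeatedly (observing for part (2) that $\mathrm{deg}(\mathrm{Syz}_1^{\overline{R}^{(i)}}\overline{M}^{(i)}\otimes k)\le \mathrm{deg}(\mathrm{Syz}_1^{R}M\otimes k)$), and your induction on $r$ is exactly a formalization of that iteration, with the same bookkeeping of socle layers, regularity maxima, and the $(1+t)^{n-i}$ Poincar\'e contributions. Incidentally, your unpacking of (1) as $\bigoplus_{j=1}^{i}\mathrm{soc}\,\overline{M}^{(j-1)}(-1)$ is the intended reading of the statement, which as printed has a typo ($\overline{M}^{(i-1)}$ where $\overline{M}^{(j-1)}$ is meant).
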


\begin{proof} (1) follows from Lemma \ref{lemA_splitting} (1).
(2) follows from Lemma \ref{lemA_splitting} (8), since the following hold:
$$ 
\mathrm{deg}\left(\mathrm{Syz}_1^{\overline{R}^{(i)}}\overline{M}^{(i)}\otimes k\right) 
\leq \mathrm{deg}\left(\mathrm{Syz}_1^{R}M\otimes k\right)\text{ for }i=1,\cdots, r-1.
$$
(3) follows from Lemma \ref{lemA_splitting} (5).
(4) follows from Lemma \ref{lemA_splitting} (6).
\end{proof}

As a corollary of Lemma \ref{lem_r-seq},  we obtain the following results.

\begin{corollary}\label{cor_r-seq-pred}
Let $ z_1,\ldots,z_r$ be an $M\text{-}\gamma\text{-}$ sequence. We denote $\overline{R}^{(i)}=R/(z_1,\ldots,z_i)R$, $\overline{M}^{(i)}=M\otimes \overline{R}^{(i)}$ for $i=1,\cdots, r$ and $\overline{M}^{(0)}=M$.
If $\mathrm{pd}_{\overline{R}^{(r)}} \overline{M}^{(r)}=0$, then the following hold:
\begin{itemize}
\item[(1)] 
$ \mathrm{reg}_R\left(\mathrm{Syz}_1^RM\right)=\mathrm{deg}\,(\mathrm{Syz}_1^RM\otimes k).$
\item[(2)] $ \mathrm{p}_{R}\left( M \right)(t)=\dim_K M\otimes k+\sum\limits_{i=1}^r\alpha\left(\overline{M}^{(i-1)};z_i\right)t(1+t)^{n-i}$.
\item[(3)]
$\beta_1^R(M)=\sum\limits_{i=1}^r\alpha\left(\overline{M}^{(i-1)};z_i\right)$ and 
$\beta_2^R(M)=\sum\limits_{i=1}^r(n-i)\alpha\left(\overline{M}^{(i-1)};z_i\right)$.
\end{itemize}
\end{corollary}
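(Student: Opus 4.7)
The plan is to observe that the hypothesis $\mathrm{pd}_{\overline{R}^{(r)}}\overline{M}^{(r)}=0$ collapses the two leftover terms in Lemma \ref{lem_r-seq} (2) and (4), after which all three conclusions fall out by a direct computation. So essentially this is a clean specialization of the previous lemma rather than a genuinely new argument.

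The first step is to note that, since $\overline{R}^{(r)}$ is (graded) $^\ast$local and $\overline{M}^{(r)}$ is finitely generated graded, $\mathrm{pd}_{\overline{R}^{(r)}}\overline{M}^{(r)}=0$ forces $\overline{M}^{(r)}$ to be graded free over $\overline{R}^{(r)}$. In particular $\mathrm{Syz}_1^{\overline{R}^{(r)}}\overline{M}^{(r)}=0$, so $\mathrm{reg}_{\overline{R}^{(r)}}\bigl(\mathrm{Syz}_1^{\overline{R}^{(r)}}\overline{M}^{(r)}\bigr)=-\infty$ and $\mathrm{p}_{\overline{R}^{(r)}}(\overline{M}^{(r)})(t)$ reduces to its constant term $\dim_K\bigl(\overline{M}^{(r)}\otimes_{\overline{R}^{(r)}}k\bigr)$. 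By associativity of tensor product,
$$
\overline{M}^{(r)}\otimes_{\overline{R}^{(r)}}k \;=\; M\otimes_R\overline{R}^{(r)}\otimes_{\overline{R}^{(r)}}k \;\cong\; M\otimes_R k,
$$
so $\mathrm{p}_{\overline{R}^{(r)}}(\overline{M}^{(r)})(t)=\dim_K (M\otimes k)$.

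Substituting these two simplifications into Lemma \ref{lem_r-seq} (2) gives (1), and substituting into Lemma \ref{lem_r-seq} (4) gives (2). For (3), one reads off coefficients of $t$ and $t^2$ from (2): the constant $\dim_K(M\otimes k)$ contributes nothing to $\beta_1^R(M)$ or $\beta_2^R(M)$, while the binomial expansion
$$
t(1+t)^{n-i}\;=\;t\,+\,(n-i)t^2\,+\,\cdots
$$
contributes $\alpha(\overline{M}^{(i-1)};z_i)$ to the coefficient of $t$ and $(n-i)\,\alpha(\overline{M}^{(i-1)};z_i)$ to the coefficient of $t^2$. Summing over $i=1,\ldots,r$ yields the claimed formulas for $\beta_1^R(M)$ and $\beta_2^R(M)$.

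There is no real obstacle here; the only point worth being careful about is the base-change identity $\overline{M}^{(r)}\otimes_{\overline{R}^{(r)}}k\cong M\otimes_R k$, which ensures that the vanishing of higher syzygies over $\overline{R}^{(r)}$ is compatible with the enumeration of minimal generators over $R$. Everything else is a bookkeeping exercise applied to the Poincaré-series identities already proved in Lemma \ref{lem_r-seq}.
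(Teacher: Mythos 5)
Your proof is correct and matches the intended derivation: the paper presents this statement as an immediate consequence of Lemma \ref{lem_r-seq} with no separate argument, and your specialization of parts (2) and (4) of that lemma — noting that $\mathrm{pd}_{\overline{R}^{(r)}}\overline{M}^{(r)}=0$ forces $\mathrm{Syz}_1^{\overline{R}^{(r)}}\overline{M}^{(r)}=0$, hence $\mathrm{reg}_{\overline{R}^{(r)}}(\mathrm{Syz}_1^{\overline{R}^{(r)}}\overline{M}^{(r)})=-\infty$ and $\mathrm{p}_{\overline{R}^{(r)}}(\overline{M}^{(r)})(t)=\dim_K(M\otimes k)$ — together with extracting the $t$ and $t^2$ coefficients for (3), is exactly the argument the corollary label leaves to the reader.
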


\begin{remark}\label{rem_kos-reg}
Assume that $R$ is a standard graded Noetherian commutative Koszul $K$-algebra with $\dim_K\mathfrak{m}\otimes k =n$.
If  $z_1,\ldots,z_r$ form an $M\text{-}\gamma\text{-}$ sequence and an $R$-regular sequence,
then statements (1) and (2) in Lemma \ref{lem_r-seq}, and statements (1) and (3) in Corollary \ref{cor_r-seq-pred}
remain valid by Remark \ref{rem_kos-split}.
\end{remark}

From Lemma $\ref{lem_r-seq}$ (3), 
we derive the following remark.

\begin{remark}\label{rem_alpha}
The values $\alpha\left(\overline{M}^{(i-1)};z_i\right)$ for $i=1,\cdots, r$ are 
independent of the choice of  the $M\text{-}\gamma\text{-}$ sequence $ z_1,\ldots,z_r$.
In fact, $\mathrm{p}_{R}\left( \mathrm{Syz}_1^RM \right)(t)$
can be expressed in the following form:
$$
\mathrm{p}_{R}\left( \mathrm{Syz}_1^RM \right)(t)=\sum\limits_{i=1}^nc_i(1+t)^{n-i}.
$$
Hence,  we obtain $\alpha\left(\overline{M}^{(i-1)};z_i\right)=c_i$ 
for $i=1,\cdots, r$,
since the degree of the polynomial 
$\mathrm{p}_{\overline{R}^{(r)}}\left(\mathrm{Syz}_1^{\overline{R}^{(r)}}\overline{M}^{(r)}\right)(t)$ 
in Lemma $\ref{lem_r-seq}$ (3)
is less than $n-r$,  \\provided that
$\mathrm{p}_{\overline{R}^{(r)}}\left(\mathrm{Syz}_1^{\overline{R}^{(r)}}\overline{M}^{(r)}\right)(t)\neq0$.
\end{remark}

\begin{lemma}\label{lem_r-seq-syz}The follwoing hold:
\begin{itemize}
\item[(1)] If $ \underline{z}=z_1,\ldots,z_r$ is an $M\text{-}\gamma\text{-}$regular sequence,
 then the following hold:
 \begin{itemize}
\item[(i)]  
$\gamma\text{-}\mathrm{NZD}_{\overline{R}^{(i)}}\overline{\mathrm{Syz}_1^R M}^{(i)}=\overline{R}^{(i)}_1\setminus\{0\}$ for $i=1,\cdots, r$.
\item[(ii)]
$ \underline{z}$ is also a $ \mathrm{Syz}_1^RM\text{-}\gamma\text{-}$regular sequence.
\end{itemize}
\item[(2)] $ \gamma\text{-}\mathrm{depth}_R\, \mathrm{Syz}_1^RM \geq 
\mathrm{min}\{n,\gamma\text{-}\mathrm{depth}_R\, M +1\}$.
\end{itemize}
\end{lemma}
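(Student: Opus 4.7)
The plan is to prove (1)(i) first by unpacking the direct-sum decomposition of Lemma \ref{lem_r-seq} (1), then to obtain (1)(ii) as an immediate corollary, and finally to extend a maximal $M$-$\gamma$-regular sequence to establish (2). The key observation used throughout is that $\gamma$-regularity is additive in direct sums: since $\mathrm{Tor}_1^R(-,X)$ commutes with finite $\oplus$, the comparison map $\rho$ of Definition \ref{def_r-reg} for $N_1\oplus N_2$ factors as $\rho_{N_1}\oplus\rho_{N_2}$, so $\gamma\text{-}\mathrm{NZD}_R(N_1\oplus N_2)=\gamma\text{-}\mathrm{NZD}_R N_1\cap\gamma\text{-}\mathrm{NZD}_R N_2$.

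For (1)(i), applying this additivity to the decomposition
\[
\overline{\mathrm{Syz}_1^R M}^{(i)}\;\simeq\;\mathrm{Syz}_1^{\overline{R}^{(i)}}\overline{M}^{(i)}\;\oplus\;\bigoplus_{j=1}^{i}\mathrm{soc}\,\overline{M}^{(j-1)}(-1),
\]
I would reduce (1)(i) to verifying that an arbitrary $z\in\overline{R}^{(i)}_1\setminus\{0\}$ is $\gamma$-regular for each summand separately. The first summand is a submodule of a free $\overline{R}^{(i)}$-module; since the $z_k$'s are linearly independent (as each $\overline{z_k}$ is a nonzero linear form in $\overline{R}^{(k-1)}$), $\overline{R}^{(i)}$ is a polynomial ring in $n-i$ variables and hence an integral domain, so $z$ is a non-zerodivisor on this summand, which by Remark \ref{rem_r-reg} (6) places it in $\gamma\text{-}\mathrm{NZD}$. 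Each socle summand is annihilated by the image of $\mathfrak{m}$ in $\overline{R}^{(i)}$, hence is a direct sum of shifted copies of the residue field, so Remark \ref{rem_r-seq} (4) applied over $\overline{R}^{(i)}$ gives $z\in\gamma\text{-}\mathrm{NZD}$.

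Part (1)(ii) then reduces to checking $\overline{z_i}\in\gamma\text{-}\mathrm{NZD}_{\overline{R}^{(i-1)}}\overline{\mathrm{Syz}_1^R M}^{(i-1)}$ for $i=1,\dots,r$. For $i\geq 2$ this is immediate from (1)(i) with index $i-1$, since $\overline{z_i}$ is a nonzero linear form in $\overline{R}^{(i-1)}_1$. For $i=1$ I would argue separately that $\mathrm{Syz}_1^R M$ is a submodule of the free $R$-module $F_M$ over the integral domain $R$, so $z_1$ is a non-zerodivisor and Remark \ref{rem_r-reg} (6) applies. For (2), set $d=\gamma\text{-}\mathrm{depth}_R M$ and fix a maximal $M$-$\gamma$-regular sequence $z_1,\dots,z_d$; by (1)(ii) it is also a $\mathrm{Syz}_1^R M$-$\gamma$-regular sequence, giving $\gamma\text{-}\mathrm{depth}_R \mathrm{Syz}_1^R M\geq d$. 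If $d<n$, then $\overline{R}^{(d)}_1\neq 0$, and any $z_{d+1}\in R_1$ whose image in $\overline{R}^{(d)}$ is nonzero extends this sequence by (1)(i) applied with $i=d$, yielding $\gamma\text{-}\mathrm{depth}_R \mathrm{Syz}_1^R M\geq\min\{n,d+1\}$.

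The argument is essentially bookkeeping once Lemma \ref{lem_r-seq} (1) is in hand, and I expect no genuine obstacle. The only mildly subtle point is the $i=1$ case of part (ii), which cannot be absorbed into the decomposition machinery and instead requires the explicit observation that first syzygies over the polynomial ring $R$ are torsion-free; this is where the integral-domain structure of $R$ is used directly, rather than via the quotient ring $\overline{R}^{(i)}$.
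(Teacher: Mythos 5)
Your proposal is correct and follows essentially the same route as the paper: (1)(i) is deduced from the direct-sum decomposition of Lemma~\ref{lem_r-seq}~(1), (1)(ii) is split into the $i=1$ case (torsion-freeness of $\mathrm{Syz}_1^R M$ inside the free module $F_M$ over the domain $R$, via Remark~\ref{rem_r-reg}~(6)) and the $i\geq 2$ cases (which follow from (1)(i)), and (2) is the same extension argument. The only thing you supply that the paper leaves implicit is the observation that $\gamma\text{-}\mathrm{NZD}_R(N_1\oplus N_2)=\gamma\text{-}\mathrm{NZD}_R N_1\cap\gamma\text{-}\mathrm{NZD}_R N_2$, which is indeed needed to turn Lemma~\ref{lem_r-seq}~(1) into the equality in (1)(i); you have also silently corrected a typo in the paper's statement of that decomposition (the socle summands should involve $\overline{M}^{(j-1)}$, not $\overline{M}^{(i-1)}$).
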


\begin{proof} (1) (i) follows from Lemma \ref{lem_r-seq} (1). 
\\(1) (ii) follows from the following:
 \begin{itemize}
\item[(a)] 
$z_1\in \mathrm{NZD}_R\mathrm{Syz}_1^RM \subseteq \gamma\text{-}\mathrm{NZD}_R\mathrm{Syz}_1^RM$ by Remark \ref{rem_r-reg} (6).
\item[(b)]
$\overline{z_i} \in \gamma\text{-}\mathrm{NZD}_{\overline{R}^{(i-1)}}\overline{\mathrm{Syz}_1^R M}^{(i-1)}=\overline{R}^{(i-1)}_1\setminus\{0\}$ for $i=2,\cdots, r$ by (1) (i).
\end{itemize}
(2) Let $\gamma\text{-}\mathrm{depth}_R\,M=r$. If $r=n$, then $\gamma\text{-}\mathrm{depth}_R\, \mathrm{Syz}_1^RM \geq n$ by (1) (ii). Assume that $r<n$ and 
$\underline{z}=z_1,\ldots,z_r$ is an $M\text{-}\gamma\text{-}$regular sequence. Then $ \underline{z}$ is also a $ \mathrm{Syz}_1^RM\text{-}\gamma\text{-}$regular sequence by (1) (ii) and we can choose an element $z_{r+1}\in R_1\setminus\{0\}$ such that
$$
\overline{z_{r+1}}\in \gamma\text{-}\mathrm{NZD}_{\overline{R}^{(r)}}\overline{\mathrm{Syz}_1^R M}^{(r)}=\overline{R}^{(r)}_1\setminus\{0\}.
$$ 
Hence  $\gamma\text{-}\mathrm{depth}_R\, \mathrm{Syz}_1^RM \geq r+1$.
\end{proof}

%
%
\section{Criteria for a $\gamma$-regular element}
\label{sec_r-reg-cr}

In this section, we provide criteria for a $\gamma$-regular element. 
To begin with, we recall the definition of $\mathfrak{m}\text{-full}$ property from \cite{watanabe1987m} 
and extend this notion for modules.

\begin{definition}\label{def_m-full} Let $V\subseteq U$ be graded modules and $ z\in R_1\setminus\{0\}$.\\
We say that $U$ is $\mathfrak{m}\textit{-full}$ in $V$ with respect to $z$ if the following hold:
$$
 \mathfrak{m}\,U\underset{V}:z=U.
 $$
\end{definition}

\begin{lemma}\label{lem_m-full}Assume that $z\in \mathrm{NZD}_R\,V$. Then the following are equivalent:
\begin{itemize}
\item[(1)]
$U$ is $\mathfrak{m}\textit{-full}$ in $V$ with respect to $z$.
\item[(2)]
$\varphi^{(z)}:V/U\overset{\times z}{\to}\left(V/\mathrm{m}U\right)(1)$ the multiplication map by $z$ is injective.
\item[(3)]
$0\underset{V/\mathfrak{m} U}:z=U/\mathfrak{m}U$.
\item[(4)]
The natural injective map 
$\iota: \mathrm{Tor}_1^R(U/\mathfrak{m}U, \overline{R})\overset{\sim}\to \mathrm{Tor}_1^R(V/\mathfrak{m}U, \overline{R})$ 
 is  an isomorphisim .
\end{itemize}
\end{lemma}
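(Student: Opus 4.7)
My plan is to establish the cycle (1) $\Leftrightarrow$ (3) $\Leftrightarrow$ (2) by directly manipulating the colon module, and then connect (3) to (4) via the standard Tor--colon identification afforded by the Koszul-type resolution of $\overline{R}$.

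For the triangle among (1), (2), and (3), the key preliminary observation is that $z \in \mathfrak{m}$, so $zU \subseteq \mathfrak{m}U$ and therefore $U \subseteq \mathfrak{m}U \underset{V}{:} z$ automatically. Taking the quotient by $\mathfrak{m}U$ yields the identification
\[
0 \underset{V/\mathfrak{m}U}{:} z \;=\; (\mathfrak{m}U \underset{V}{:} z)\big/\mathfrak{m}U,
\]
so condition (3) is exactly the statement $\mathfrak{m}U \underset{V}{:} z = U$, which is (1). For (1) $\Leftrightarrow$ (2), I would compute the kernel of $\varphi^{(z)}$ directly: a coset $v+U \in V/U$ is annihilated precisely when $zv \in \mathfrak{m}U$, yielding $\ker \varphi^{(z)} = (\mathfrak{m}U \underset{V}{:} z)/U$, which vanishes if and only if $\mathfrak{m}U \underset{V}{:} z = U$.

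For (3) $\Leftrightarrow$ (4), the plan is to use the short projective resolution
\[
0 \to R(-1) \xrightarrow{\,z\,} R \to \overline{R} \to 0,
\]
which is valid because $z \neq 0$ is a nonzerodivisor on the polynomial ring $R$. Tensoring with any graded $R$-module $N$ gives a natural isomorphism $\mathrm{Tor}_1^R(N,\overline{R}) \cong (0 :_N z)(-1)$. Since $z$ annihilates $U/\mathfrak{m}U$, this identifies $\mathrm{Tor}_1^R(U/\mathfrak{m}U,\overline{R})$ with $(U/\mathfrak{m}U)(-1)$ in its entirety, and naturality applied to the inclusion $U/\mathfrak{m}U \hookrightarrow V/\mathfrak{m}U$ identifies $\iota$ (up to the shift $(-1)$) with the tautological inclusion $U/\mathfrak{m}U \hookrightarrow 0 :_{V/\mathfrak{m}U} z$. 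Hence $\iota$ is injective automatically and is an isomorphism exactly when $U/\mathfrak{m}U = 0 :_{V/\mathfrak{m}U} z$, which is condition (3). As a sanity check, injectivity can also be deduced from the Tor long exact sequence of $0 \to U/\mathfrak{m}U \to V/\mathfrak{m}U \to V/U \to 0$ via $\mathrm{Tor}_2^R(V/U,\overline{R}) = 0$, using $\mathrm{pd}_R \overline{R} \leq 1$.

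I do not foresee a substantive obstacle; the lemma is essentially an unpacking of definitions once the Tor--colon dictionary is in place. The only point requiring minor care is verifying that the naturality isomorphism $\mathrm{Tor}_1^R(-,\overline{R}) \cong (0 :_{-} z)(-1)$ is compatible with the inclusion-induced $\iota$, which follows routinely from the functoriality of the connecting map in the Tor long exact sequence. I also note that the hypothesis $z \in \mathrm{NZD}_R V$ is not actually needed for any of the above equivalences, so it is presumably included to align with the hypotheses of the later applications of the lemma.
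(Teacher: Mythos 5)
Your proof is correct and follows essentially the same route as the paper: the equivalence of (1), (2), (3) via the computations $\ker\varphi^{(z)} = (\mathfrak{m}U :_V z)/U$ and $0:_{V/\mathfrak{m}U} z = (\mathfrak{m}U :_V z)/\mathfrak{m}U$, and the equivalence with (4) via the identifications $\mathrm{Tor}_1^R(N,\overline{R}) \cong (0:_N z)(-1)$ coming from the resolution $0 \to R(-1) \xrightarrow{z} R \to \overline{R} \to 0$. Your closing remark that the hypothesis $z \in \mathrm{NZD}_R V$ is not actually used is also accurate, since $R$ is a polynomial ring so any nonzero linear $z$ is $R$-regular, which is all that the Tor-colon dictionary requires.
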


\begin{proof} The equivalence among (1), (2), and (3) follows from the following equations:
$$
\mathrm{Ker}\varphi^{(z)}=\left(\mathfrak{m}U\underset{V}:z\right)/U\text{ and }
0\underset{V/\mathfrak{m}U}:z=\left(\mathfrak{m}U\underset{V}:z\right)/\mathfrak{m}U.
$$
(4) is equivalent to (3) because the following hold:
$$
\mathrm{Tor}_1^R(V/\mathfrak{m}U, \overline{R})\simeq \left(0\underset{V/\mathfrak{m}U}:z\right)(-1)\text{ and }
\mathrm{Tor}_1^R(U/\mathfrak{m}U, \overline{R})\simeq \left(U/\mathfrak{m}U\right)(-1).
$$
\end{proof}

\begin{notation}\label{n-5} Let $ \pi:F_M\to M$ be  a minimal graded free cover of $ M$.
\begin{itemize}
\item[(1)]
$\mathrm{C}^R M=F_M/\mathfrak{m}\mathrm{Syz}_1^RM$.
\item[(2)]
$ \mathrm{C}_0^R M=M$ and $ \underbrace{\mathrm{C}^R(\cdots (\mathrm{C}^R}_{{i\text{-times}}} M))=\mathrm{C}_i^R M=F_M/\mathfrak{m}^{i}\mathrm{Syz}_1^RM$ for $ i\geq1$.
\end{itemize}
\end{notation}

\begin{remark}\label{rem_Ci} 
$ \mathrm{C}_i^R M\otimes\overline R \simeq F_M/\left(\mathfrak{m}^i\mathrm{Syz}_1^RM+zF_M\right)
\simeq
\mathrm{C}_i^{\overline{R}}\, \overline{M}$ for  $ i\geq1$, 
where $ z\in R_1\setminus\{0\}$, $\overline{R}=R/zR$ and $\overline{M}=M\otimes \overline{R}$. 
\end{remark}

\begin{lemma}\label{lem_r-reg-cr} 
Let $ \pi:F_M\to M$ be  a minimal graded free cover of $ M$. 
We denote $\overline{R}=R/zR$ and $\overline{M}=M\otimes \overline{R}$ for $ z\in R_1\setminus\{0\}$. 
The following hold:
\begin{enumerate}[label=(\arabic*)]
\item
$\beta_1^RM \leq \alpha(M;z)+\beta_1^{\overline{R}}\,\overline{M}$.
\item
The following are equivalent:
\begin{enumerate}[label=(\roman*)]
\item
$z\in\gamma\text{-}\mathrm{NZD}_RM$.
\item
$\beta_1^RM=\alpha(M;z)+\beta_1^{\overline{R}}\,\overline{M}$.
\item
$ \beta_{1}^R(M)=\alpha\left(\mathrm{C}^R M;z\right)$.
\item
$\mathrm{Syz}_1^RM$ is $\mathfrak{m}\textit{-full}$ in $F_M$ with respect to $z$. 
\end{enumerate}
\end{enumerate}
\end{lemma}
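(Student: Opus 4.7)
The plan is to handle (1) via the exact sequence from Observation~\ref{obs_lemA}, then separately prove (i)$\Leftrightarrow$(ii), (iii)$\Leftrightarrow$(iv), and (i)$\Leftrightarrow$(iv) to close the cycle of equivalences.

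First I would apply $-\otimes_{\overline R}k$ to the exact sequence
$$0\to\mathrm{Tor}_1^R(M,\overline R)\xrightarrow{\iota}\mathrm{Syz}_1^RM\otimes\overline R\to\mathrm{Syz}_1^{\overline R}\overline M\to 0$$
of Observation~\ref{obs_lemA}, yielding
$$\mathrm{Tor}_1^R(M,\overline R)\otimes_{\overline R}k\xrightarrow{\iota\otimes k}\mathrm{Syz}_1^RM\otimes k\to\mathrm{Syz}_1^{\overline R}\overline M\otimes_{\overline R}k\to 0.$$
Since $\mathrm{Tor}_1^R(M,\overline R)\simeq(0\underset{M}:z)(-1)$ has $K$-dimension $\alpha(M;z)$ and $\dim_K(N\otimes_{\overline R}k)\le\dim_K N$ for any finitely generated $N$, counting dimensions gives (1). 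Equality holds iff (a)~$\iota\otimes k$ is injective and (b)~$\mathfrak m\cdot\mathrm{Tor}_1^R(M,\overline R)=0$. Under (b), the identification $\mathrm{Tor}_1^R(M,\overline R)\otimes_{\overline R}k=\mathrm{Tor}_1^R(M,\overline R)$ combined with $\eta\circ\iota=\theta\circ\rho$ from Observation~\ref{obs_lemA} makes $\iota\otimes k$ coincide with $\theta\circ\rho$, so (a)+(b) is equivalent to $\rho$ being injective. Conversely, if $\rho$ is injective then $\mathrm{Im}\,\iota\cap\mathrm{Ker}\,\eta=0$, and for any $\xi\in\mathrm{Im}\,\iota$ we get $\mathfrak m\xi\subseteq\mathrm{Im}\,\iota\cap\mathrm{Ker}\,\eta=0$, forcing (b); (a) is then immediate. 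This establishes (i)$\Leftrightarrow$(ii).

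For (iii)$\Leftrightarrow$(iv), I would apply Lemma~\ref{lem_m-full}(4) with $V=F_M$, $U=\mathrm{Syz}_1^RM$; this is legitimate since $z$ is $F_M$-regular. Condition (iv) is then equivalent to the natural injection $\mathrm{Tor}_1^R(\mathrm{Syz}_1^RM/\mathfrak m\mathrm{Syz}_1^RM,\overline R)\hookrightarrow\mathrm{Tor}_1^R(\mathrm C^RM,\overline R)$ being an isomorphism. Using $\mathrm{Tor}_1^R(k,\overline R)\simeq k(-1)$ and that $\mathrm{Syz}_1^RM/\mathfrak m\mathrm{Syz}_1^RM$ is a $K$-vector space of dimension $\beta_1^RM$, the source has dimension $\beta_1^RM$; the target has dimension $\alpha(\mathrm C^RM;z)$ via $\mathrm{Tor}_1^R(\mathrm C^RM,\overline R)\simeq(0\underset{\mathrm C^RM}:z)(-1)$. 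Comparing dimensions gives (iv)$\Leftrightarrow$(iii).

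Finally, for (i)$\Leftrightarrow$(iv), I would unwind $\iota$ via snake-lemma chasing on the sequence $0\to\mathrm{Syz}_1^RM\to F_M\to M\to 0$ tensored with $\overline R$: a class $a\in 0\underset{M}:z$ lifts to $\tilde a\in F_M$ with $z\tilde a\in\mathrm{Syz}_1^RM$, and $\iota$ sends it to the class of $z\tilde a$ in $\mathrm{Syz}_1^RM/z\mathrm{Syz}_1^RM=\mathrm{Syz}_1^RM\otimes\overline R$. Hence $\mathrm{Im}\,\iota=(zF_M\cap\mathrm{Syz}_1^RM)/z\mathrm{Syz}_1^RM$ while $\mathrm{Ker}\,\eta=\mathfrak m\mathrm{Syz}_1^RM/z\mathrm{Syz}_1^RM$. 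Thus $\rho$ is injective $\Leftrightarrow$ $\mathrm{Im}\,\iota\cap\mathrm{Ker}\,\eta=0$ $\Leftrightarrow$ $zF_M\cap\mathfrak m\mathrm{Syz}_1^RM\subseteq z\mathrm{Syz}_1^RM$. Using the $F_M$-regularity of $z$, this last inclusion is equivalent to $\mathfrak m\mathrm{Syz}_1^RM\underset{F_M}:z\subseteq\mathrm{Syz}_1^RM$; together with the trivial reverse inclusion this gives (iv). The most delicate step I expect is the explicit identification of $\mathrm{Im}\,\iota$ as $(zF_M\cap\mathrm{Syz}_1^RM)/z\mathrm{Syz}_1^RM$, which requires careful tracking of the definition of $\iota$ as a connecting morphism; the rest is bookkeeping with dimensions and the commutative diagram of Observation~\ref{obs_lemA}.
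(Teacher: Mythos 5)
Your proof is correct, but it follows a genuinely different route from the paper's. The paper introduces a dedicated observation (Observation~\ref{obs_r-reg-cr}), built from the short exact sequence $0\to U/\mathfrak{m}U\to F_M/\mathfrak{m}U\to M\to 0$ with $U=\mathrm{Syz}_1^RM$, which yields a single five-term exact sequence
$0\to\mathrm{Tor}_1^R(U/\mathfrak mU,\overline R)\xrightarrow{\iota}\mathrm{Tor}_1^R(\mathrm C^RM,\overline R)\to\mathrm{Tor}_1^R(M,\overline R)\xrightarrow{\rho}\mathrm{Tor}_1^R(M,k)\to\mathrm{Tor}_1^{\overline R}(\overline M,k)\to 0$
containing both $\rho$ and the $\iota$ of Lemma~\ref{lem_m-full}~(4) simultaneously. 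From this one sequence the paper reads off (1), (i)$\Leftrightarrow$(ii) (injectivity of $\rho$), and (ii)$\Leftrightarrow$(iii) (surjectivity, hence bijectivity, of $\iota$), and then finishes with (iii)$\Leftrightarrow$(iv) via Lemma~\ref{lem_m-full}. You instead work from the short exact sequence $0\to U\to F_M\to M\to 0$ as packaged in Observation~\ref{obs_lemA}, tensor it with $k$ to get (1) and (i)$\Leftrightarrow$(ii), derive (iii)$\Leftrightarrow$(iv) independently from Lemma~\ref{lem_m-full} by a dimension count, and then bridge the two pairs by an explicit snake-lemma identification $\mathrm{Im}\,\iota=(zF_M\cap U)/zU$ and $\mathrm{Ker}\,\eta=\mathfrak mU/zU$ to show $\rho$ injective iff $zF_M\cap\mathfrak mU\subseteq zU$ iff $\mathfrak mU:_{F_M}z=U$. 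Your identification of $\mathrm{Im}\,\iota$ is correct, so the element chase goes through. The trade-off is that the paper's tailored sequence gets all three numerical equivalences in one dimension count, whereas your argument proves three bidirectional links and is somewhat more roundabout in (i)$\Leftrightarrow$(ii): your side conditions (a) and (b) can be bypassed entirely by noting that $\mathrm{Im}(\iota\otimes k)=\mathrm{Im}(\eta\circ\iota)=\theta(\mathrm{Im}\,\rho)$, so equality in (1) is directly equivalent to $\dim\mathrm{Im}\,\rho=\alpha(M;z)$, i.e.\ to $\rho$ being injective. On the other hand, your element-level derivation of (i)$\Leftrightarrow$(iv) makes the link to $\mathfrak m$-fullness more transparent than the paper's path through the isomorphism criterion of Lemma~\ref{lem_m-full}~(4), which is a genuine conceptual gain.
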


Before proving Lemma \ref{lem_r-reg-cr} above, we make the following observation:

\begin{observation}\label{obs_r-reg-cr} 
Let $F_M\overset{\pi_M}{\to} M$ be a minimal graded free cover of $M$. We denote $U=\mathrm{Syz}_1^RM$, 
$\overline{R}=R/zR$ and $\overline{X}=X\otimes \overline{R}$ for a graded moudle $X$.\\
There is the following exact sequence:
 {\footnotesize
 $$
0\to \mathrm{Tor}_1^R(U/\mathfrak{m}U, \overline{R})\overset{\iota}\to \mathrm{Tor}_1^R(F_{M}/\mathfrak{m}U, \overline{R})\to \mathrm{Tor}_1^R(M, \overline{R})\overset{\rho}\to \mathrm{Tor}_1^R(M, k)\to \mathrm{Tor}_1^{\overline{R}}(\overline{M}, k)\to0,
 $$}
which is induced as follows:\\
Here, we denote $\mathrm{T}_{\overline{R}}(M)=\mathrm{Tor}_1^R(M, \overline{R})$ and 
$\mathrm{T}_{k}(M)=\mathrm{Tor}_1^R(M, k)$.
Applying functors $ \_\otimes k$ and  $\_ \otimes\overline{R}$ on the following exact sequence:
 $$ 
 0\to U/\mathfrak{m}U\to F_M/\mathfrak{m}U \overset{\pi'}\to M \to 0,
 $$
we obtain the following commutative diagram, where the first row is exact:
 {\footnotesize
 $$
\begin{tikzcd}[row sep=0.5cm,column sep=0.3cm]
0\arrow[r]&\mathrm{T}_{\overline{R}}(U/\mathfrak{m}U)\arrow[r,"\iota"]&\mathrm{T}_{\overline{R}}(F_M/\mathfrak{m}U)\arrow[r]&\mathrm{T}_{\overline{R}}(M) \arrow[d,"\rho"]\arrow[r] & \overline{U/\mathfrak{m}U}\arrow[d, "\theta_2"]\arrow[r]   & \overline{F_M/\mathfrak{m}U} \arrow[r," \pi' \otimes \overline{R}"] & \overline{M}
\\
&&&\mathrm{T}_{k}(M)\arrow[r,"\theta_1" ']& U/\mathfrak{m}U \otimes k.& & \hspace{50pt}
\end{tikzcd}
$$}
 In the above diagram, we note the following:\\
(i) $ \theta_1$ and $ \theta_2$ are isomorphisms.\\
(ii) $ \mathrm{Ker}\left(\pi' \otimes\overline{R}\right)\simeq \dfrac{U+zF_M}{\mathfrak{m}U+zF_M}\simeq \mathrm{Tor}_1^{\overline{R}}(\overline{M}, k)$.
\end{observation}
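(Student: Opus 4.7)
My plan is to derive the five-term sequence by splicing two Tor long exact sequences attached to the short exact sequence
$$0 \to U/\mathfrak{m}U \to F_M/\mathfrak{m}U \to M \to 0,$$
where $U = \mathrm{Syz}_1^R M$ (note that $U/\mathfrak{m}U \hookrightarrow F_M/\mathfrak{m}U$ makes sense because $U \subseteq F_M$). Applying $-\otimes_R \overline{R}$ produces the upper row of the desired sequence, while applying $-\otimes_R k$ produces the bottom row; the surjection $\overline{R} \twoheadrightarrow k$ induces a natural transformation of these long exact sequences, and the vertical arrow $\rho$ in the statement is this comparison map on the third term.

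First I would establish the left end. The $\overline{R}$-Tor sequence begins with $0$ on the left because $\mathrm{Tor}_2^R(M, \overline{R}) = 0$, which follows from the length-one free resolution $0 \to R(-1) \xrightarrow{\times z} R \to \overline{R} \to 0$ (so in fact $\mathrm{Tor}_i^R(-, \overline{R}) = 0$ for all $i \geq 2$). Hence the $\overline{R}$-row truncates to
$$0 \to \mathrm{Tor}_1^R(U/\mathfrak{m}U, \overline{R}) \xrightarrow{\iota} \mathrm{Tor}_1^R(F_M/\mathfrak{m}U, \overline{R}) \to \mathrm{Tor}_1^R(M, \overline{R}) \xrightarrow{\partial} \overline{U/\mathfrak{m}U} \to \overline{F_M/\mathfrak{m}U} \to \overline{M} \to 0,$$
which already supplies the first three arrows and the injectivity of $\iota$.

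Next I would verify that the auxiliary maps $\theta_1$ and $\theta_2$ of the diagram are isomorphisms. The map $\theta_2$ is immediate since $U/\mathfrak{m}U$ is annihilated by $\mathfrak{m}$, so tensoring with $\overline{R}$ or with $k$ yields the same $k$-space $U/\mathfrak{m}U$. For $\theta_1$: because $\mathfrak{m}U \subseteq \mathfrak{m}F_M$, the surjection $F_M \twoheadrightarrow F_M/\mathfrak{m}U$ is itself a minimal free cover, so $(F_M/\mathfrak{m}U)\otimes k \xrightarrow{\sim} M\otimes k$; the exactness of the $k$-row then forces $\theta_1$ to be surjective, and comparison of $K$-dimensions (both $\mathrm{Tor}_1^R(M,k)$ and $(U/\mathfrak{m}U)\otimes k$ equal $U/\mathfrak{m}U$) upgrades this to an isomorphism. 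Naturality of the boundary morphism of Tor under the transformation $-\otimes\overline{R}\Rightarrow -\otimes k$ then identifies $\partial$ with $\theta_1 \circ \rho$, and since $\theta_1$ is an iso this gives exactness of the spliced sequence at $\mathrm{Tor}_1^R(M, \overline{R})$, namely $\ker \rho = \mathrm{im}\bigl(\mathrm{Tor}_1^R(F_M/\mathfrak{m}U, \overline{R}) \to \mathrm{Tor}_1^R(M, \overline{R})\bigr)$.

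Finally, to glue $\mathrm{Tor}_1^{\overline{R}}(\overline{M}, k) \to 0$ on the right, I would invoke the Cartan--Eilenberg change-of-rings spectral sequence $E_2^{p,q} = \mathrm{Tor}_p^{\overline{R}}(\mathrm{Tor}_q^R(M, \overline{R}), k) \Rightarrow \mathrm{Tor}_{p+q}^R(M, k)$, whose five-term exact sequence ends in
$$\mathrm{Tor}_1^R(M, \overline{R}) \otimes_{\overline{R}} k \xrightarrow{\ \rho\ } \mathrm{Tor}_1^R(M, k) \to \mathrm{Tor}_1^{\overline{R}}(\overline{M}, k) \to 0;$$
since $\mathrm{Tor}_1^R(M, \overline{R})$ is already annihilated by $z$, the tensor with $k$ is redundant and the map is precisely $\rho$. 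The main obstacle I anticipate is the naturality bookkeeping: verifying that the connecting morphism of the $\overline{R}$-Tor sequence really matches $\theta_1 \circ \rho$ under the vertical comparison is routine but must be written out carefully to justify the exactness at the middle node; once that identification is in hand, the spliced sequence is exact everywhere by construction.
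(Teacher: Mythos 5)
Your argument is correct in substance, and on the left-hand side it is essentially the paper's own proof: both tensor $0\to U/\mathfrak{m}U\to F_M/\mathfrak{m}U\to M\to 0$ with $\overline{R}$ and with $k$, use $\mathrm{Tor}_2^R(M,\overline{R})=0$ (since $\mathrm{pd}_R\overline{R}=1$) to get the initial zero, check that $\theta_1,\theta_2$ are isomorphisms, and use naturality of the connecting map in the coefficient variable to convert exactness at $\mathrm{Tor}_1^R(M,\overline{R})$ into $\mathrm{Ker}\,\rho=\mathrm{Im}\bigl(\mathrm{Tor}_1^R(F_M/\mathfrak{m}U,\overline{R})\to\mathrm{Tor}_1^R(M,\overline{R})\bigr)$. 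You genuinely diverge only at the right-hand tail: the paper stays inside the same diagram and identifies the cokernel of the connecting map elementarily via note (ii), namely $\mathrm{Ker}(\pi'\otimes\overline{R})\simeq (U+zF_M)/(\mathfrak{m}U+zF_M)\simeq \mathrm{Syz}_1^{\overline{R}}\overline{M}\otimes k\simeq \mathrm{Tor}_1^{\overline{R}}(\overline{M},k)$, whereas you invoke the five-term exact sequence of the change-of-rings spectral sequence $\mathrm{Tor}_p^{\overline{R}}(\mathrm{Tor}_q^R(M,\overline{R}),k)\Rightarrow\mathrm{Tor}_{p+q}^R(M,k)$. Both routes work: yours packages the exactness at $\mathrm{Tor}_1^R(M,k)$ and the final surjectivity in one standard citation (at the price of the edge-map naturality check you flag), while the paper's computation is self-contained and in any case cannot be skipped entirely, because the identification in note (ii) is itself part of the Observation; your write-up leaves (ii) unaddressed, though it is a two-line computation with the minimal presentations $F_M/(\mathfrak{m}U+zF_M)$ and $F_M/(U+zF_M)$.

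One correction: your claim that the tensor with $k$ in $E_2^{0,1}=\mathrm{Tor}_1^R(M,\overline{R})\otimes_{\overline{R}}k$ is redundant because $\mathrm{Tor}_1^R(M,\overline{R})$ is annihilated by $z$ is not right. Indeed $\mathrm{Tor}_1^R(M,\overline{R})\simeq\left(0\underset{M}{:}z\right)(-1)$ is an $\overline{R}$-module but in general not a $k$-vector space (e.g.\ $M=R/z^2R$ with $n\geq2$, where $0\underset{M}{:}z\simeq\overline{R}(-1)$), so $E_2^{0,1}$ is a proper quotient in general. The argument survives because $\rho$ lands in a $k$-vector space, hence kills $\mathfrak{m}\cdot\mathrm{Tor}_1^R(M,\overline{R})$ and factors through $E_2^{0,1}$; the factored edge map therefore has the same image as $\rho$, and equality of images is all that exactness at $\mathrm{Tor}_1^R(M,k)$ requires. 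Phrase it that way (and note that the compatibility of the edge map with the map induced by $\overline{R}\to k$ is the standard fact you still need to cite or check) rather than asserting that the two modules coincide.
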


Using the above Observation \ref{obs_r-reg-cr}, we prove Lemma  \ref{lem_r-reg-cr}.
 
\begin{proof}[Proof of Lemma \ref{lem_r-reg-cr}] 
(1) From the exact sequence of Observation \ref{obs_r-reg-cr}, we obtain the following inequality:
$$
\dim_K\mathrm{Tor}_1^R(M, k)\leq 
\dim_K\mathrm{Tor}_1^R(M, \overline{R})+\dim_K\mathrm{Tor}_1^{\overline{R}}(\overline{M}, k).
$$
This implies (1) since $\alpha(M;z)=\dim_K\mathrm{Tor}_1^R(M, \overline{R})$. \\
(2)  (i)$\Leftrightarrow$(ii): By definition, $z\in\gamma\text{-}\mathrm{NZD}_RM$ if and only if the map $\rho$ in the exact sequence of Observation \ref{obs_r-reg-cr} is injective. This is equivalent to the following equation holding:
$$
\beta_1^RM=\alpha(M;z)+\beta_1^{\overline{R}}\,\overline{M}.
$$
(ii)$\Leftrightarrow$(iii): In the exact sequence of Observation \ref{obs_r-reg-cr}, the map $\rho$ is injective if and only if the map $\iota$ is an isomorphism.This is equivalent to the following equation holding:
$$
\dim_K\mathrm{Tor}_1^R(U/\mathfrak{m}U,  \overline{R})=\beta_1^RM=\alpha\left(\mathrm{C}^R M;z\right)=\dim_K \mathrm{Tor}_1^R(F_{M}/\mathfrak{m}U, \overline{R}).
$$
(iii)$\Leftrightarrow$(iv): This follows from Lemma \ref{lem_m-full} .
\qed
\end{proof}

\begin{remark}\label{rem_r-reg-cr}
Assume that $R$ is a standard graded Noetherian commutative $K$-algebra. 
If $z\in \mathrm{NZD}R$, 
then statements (1) and (2) in Lemma \ref{lem_r-reg-cr} remain valid.
\end{remark}

%
%
\section{Criteria for a $\gamma$-regular sequence}
\label{sec_r-reg-cr}

In this section, we provide criteria for a $\gamma$-regular sequence. 
We denote $ \overline{R}^{(0)}=R$, $ \overline{M}^{(0)}=M$, 
$ \overline{R}^{(i)}=R/(z_1,\ldots,z_i)R$ and $ \overline{M}^{(i)}=M/(z_1,\ldots,z_i)M$, 
where $ z_1,\ldots,z_i\in R_1\setminus \{0\}$ for $ i\geq1$.

\begin{lemma}\label{lem_r-seq-cr} 
Let $z_1,\ldots,z_r$ be a sequence in $ R_1\setminus \{0\}$. The following hold:
\begin{enumerate}[label=(\arabic*)]
\item
$\beta_{1}^{\overline{R}^{(i-1)}}\left(\overline{M}^{(i-1)}\right)\leq\alpha\left(\overline{M}^{(i-1)};z_i\right)+\beta_{1}^{\overline{R}^{(i)}}\left(\overline{M}^{(i)}\right)$ for $ i=1,\ldots,r$.
\item
$\beta_{1}^R(M)\leq \sum\limits_{i=1}^{r}\alpha\left(\overline{M}^{(i-1)};z_i\right)+\beta_{1}^{\overline{R}^{(r)}}\left(\overline{M}^{(r)}\right)$.
\item
The following conditions are equivalent:
\begin{enumerate}[label=(\roman*)]
\item
$ z_1,\ldots,z_r$ is a $ M\text{-}\gamma\text{-sequence}$ .
\item
$\beta_{1}^R(M)\geq \sum\limits_{i=1}^{r}\alpha\left(\overline{M}^{(i-1)};z_i\right)+\beta_{1}^{\overline{R}^{(r)}}\left(\overline{M}^{(r)}\right)$.
\item
$\beta_{1}^R(M)=\sum\limits_{i=1}^{r}\alpha\left(\overline{M}^{(i-1)};z_i\right)+\beta_{1}^{\overline{R}^{(r)}}\left(\overline{M}^{(r)}\right)$.
\item
$\beta_{1}^{\overline{R}^{(i-1)}}\left(\overline{M}^{(i-1)}\right)
=\alpha\left(\overline{M}^{(i-1)};z_i\right)+\beta_{1}^{\overline{R}^{(i)}}\left(\overline{M}^{(i)}\right)$ for $ i=1,\ldots,r$.
\item
$\beta_{1}^{\overline{R}^{(i-1)}}\left(\overline{M}^{(i-1)}\right)
=\alpha\left( \mathrm{C}^{\overline{R}^{(i-1)}}\,\overline{M}^{(i-1)};z_i\right)$ for $ i=1,\ldots,r$. 
\end{enumerate}
\end{enumerate}
\end{lemma}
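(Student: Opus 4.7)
The plan is to iterate Lemma~\ref{lem_r-reg-cr} along the sequence, applying it at each step to the pair $(\overline{R}^{(i-1)},\overline{M}^{(i-1)})$ with respect to the image $\overline{z_i}$. Since $R$ is a polynomial ring over a field and the $z_j$ are linear forms, each successive quotient $\overline{R}^{(i-1)}$ is again a standard graded polynomial ring (in fewer variables), so any nonzero linear form in it is a non-zero divisor; this is exactly the hypothesis required so that Remark~\ref{rem_r-reg-cr} transports Lemma~\ref{lem_r-reg-cr} to these intermediate rings.

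For part~(1), I apply Lemma~\ref{lem_r-reg-cr}~(1) to $\overline{M}^{(i-1)}$ over $\overline{R}^{(i-1)}$ with $\overline{z_i}$; this yields the stated inequality directly once one observes that $\alpha(\overline{M}^{(i-1)};z_i)=\alpha(\overline{M}^{(i-1)};\overline{z_i})$, since $z_i$ and $\overline{z_i}$ act identically on $\overline{M}^{(i-1)}$. Part~(2) then follows by summing the inequalities from~(1) for $i=1,\ldots,r$, producing a telescoping chain that starts at $\beta_1^R(M)=\beta_1^{\overline{R}^{(0)}}(\overline{M}^{(0)})$ and terminates at $\beta_1^{\overline{R}^{(r)}}(\overline{M}^{(r)})$, with the accumulated correction term $\sum_{i=1}^{r}\alpha(\overline{M}^{(i-1)};z_i)$.

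For part~(3), I close a cycle of implications. The equivalences (i)$\Leftrightarrow$(iv) and (i)$\Leftrightarrow$(v) both follow from Lemma~\ref{lem_r-reg-cr}~(2) applied at each index separately: the former uses the (i)$\Leftrightarrow$(ii) characterization of $\gamma$-regularity, while the latter uses (i)$\Leftrightarrow$(iii) in terms of $\mathrm{C}^{\overline{R}^{(i-1)}}\overline{M}^{(i-1)}$. Then (iv)$\Rightarrow$(iii) is obtained by summing the termwise equalities, (iii)$\Rightarrow$(ii) is trivial, and (ii)$\Rightarrow$(iv) is a squeeze argument: since the sum of the one-step inequalities from~(1) already produces the upper bound in~(2), the matching lower bound supplied by~(ii) forces every one-step inequality to be an equality, which is precisely~(iv). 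The only genuine verification is the applicability of Remark~\ref{rem_r-reg-cr} in each intermediate ring; once that is dispatched, the remainder is bookkeeping, and I do not expect any real obstacle.
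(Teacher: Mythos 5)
Your proposal is correct and follows essentially the same approach as the paper: iterate Lemma \ref{lem_r-reg-cr} along the sequence (noting that each $\overline{R}^{(i-1)}$ is again a polynomial ring, so Lemma \ref{lem_r-reg-cr} applies directly without needing Remark \ref{rem_r-reg-cr}), sum the one-step inequalities for part~(2), and in part~(3) use a telescoping/non-negativity squeeze to force termwise equality from the global one. The only cosmetic difference is the shape of the implication cycle in~(3) (the paper pairs (ii)$\Leftrightarrow$(iii) and closes via (iii)$\Rightarrow$(iv), while you go (iv)$\Rightarrow$(iii)$\Rightarrow$(ii)$\Rightarrow$(iv)), which is a bookkeeping reshuffle with identical content.
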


\begin{proof} (1) follows from Lemma \ref{lem_r-reg-cr} (1).\\
(2) is obtained by  summing up the inequalities in (1) from $ i=1$ to $r$.\\
(3) The equivalence among (i), (iv) and (v) follows directly from $\ref{lem_r-reg-cr}$ (2). \\
The equivalence between  (ii) and (iii) follows from (2).
If (iv) holds, then summing up the equations in (iv) from $ i=1$ to $r$, we obtain (iii).\\
We only need to prove that  (iii) implies (iv). If (iii) holds, then we have
 \begin{align*}
 \beta_{1}^R(M)
 &=
 \sum\limits_{i=1}^{r}\left(\beta_{1}^{\overline{R}^{(i-1)}}\left(\overline{M}^{(i-1)}\right)-\beta_{1}^{\overline{R}^{(i)}}\left(\overline{M}^{(i)}\right)\right)+\beta_{1}^{\overline{R}^{(r)}}\left(\overline{M}^{(r)}\right)\\
 &=\sum\limits_{i=1}^{r}\alpha\left(\overline{M}^{(i-1)};z_i\right)+\beta_{1}^{\overline{R}^{(r)}}\left(\overline{M}^{(r)}\right).
 \end{align*}
From the above equation, we obtain the following:
$$ \sum\limits_{i=1}^{r}\left(\alpha\left(\overline{M}^{(i-1)};z_i\right)+\beta_{1}^{\overline{R}^{(i)}}\left(\overline{M}^{(i)}\right)-\beta_{1}^{\overline{R}^{(i-1)}}\left(\overline{M}^{(i-1)}\right)\right)=0.$$
Hence, we conclude that the equations in (iv) hold, since by (1), 
$$ \alpha\left(\overline{M}^{(i-1)};z_i\right)+\beta_{1}^{\overline{R}^{(i)}}\left(\overline{M}^{(i)}\right)-\beta_{1}^{\overline{R}^{(i-1)}}\left(\overline{M}^{(i-1)}\right)\geq0\text{ for } i=1,\ldots,r. \qquad$$
\end{proof}

Using above Lemma \ref{lem_r-reg-cr} (3), we obtain the following result.

\begin{lemma}\label{lem_r-reg-cr-pred} 
Let $ z_1,\ldots,z_r$ be a $ M\text{-}\gamma\text{-sequence}$. 
If $  \mathrm{pd}_{\overline{R}^{(r)}}\overline{M}^{(r)}=0$, then the following hold:
\begin{enumerate}[label=(\arabic*)]
\item
$ \alpha\left( \mathrm{C}^{\overline{R}^{(j-1)}}\,\overline{M}^{(j-1)};z_j\right) 
=\sum\limits_{i=j}^{r}\alpha\left(\overline{M}^{(i-1)};z_i\right)$ for $ j=1,\ldots,r$.
\item
$ \sum\limits_{j=1}^r\alpha\left( \mathrm{C}^{\overline{R}^{(j-1)}}\,\overline{M}^{(j-1)};z_j\right)
 =n\beta_1^R(M)-\beta_2^R(M)$.
\end{enumerate}
\end{lemma}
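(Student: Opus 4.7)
The plan is to reduce both statements to formulas already proved in Lemma~\ref{lem_r-seq-cr} and Corollary~\ref{cor_r-seq-pred}, using the fact that any tail $z_j,\ldots,z_r$ of an $M$-$\gamma$-sequence is itself an $\overline{M}^{(j-1)}$-$\gamma$-sequence over $\overline{R}^{(j-1)}$ (immediate from Definition~\ref{def_r-seq}, since that definition imposes the $\gamma$-regular condition on each consecutive quotient).

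For part (1), I would first observe that since $z_1,\ldots,z_r$ is an $M$-$\gamma$-sequence, Lemma~\ref{lem_r-seq-cr}(3) (iv)$\Leftrightarrow$(v) gives
$$\beta_{1}^{\overline{R}^{(j-1)}}\bigl(\overline{M}^{(j-1)}\bigr)
= \alpha\bigl(\mathrm{C}^{\overline{R}^{(j-1)}}\overline{M}^{(j-1)};z_j\bigr)$$
for each $j=1,\ldots,r$. On the other hand, applying Lemma~\ref{lem_r-seq-cr}(3) (i)$\Leftrightarrow$(iii) to the tail $z_j,\ldots,z_r$ as an $\overline{M}^{(j-1)}$-$\gamma$-sequence over $\overline{R}^{(j-1)}$, and using the hypothesis $\mathrm{pd}_{\overline{R}^{(r)}}\overline{M}^{(r)}=0$ which forces $\beta_1^{\overline{R}^{(r)}}(\overline{M}^{(r)})=0$, one obtains
$$\beta_{1}^{\overline{R}^{(j-1)}}\bigl(\overline{M}^{(j-1)}\bigr)
= \sum_{i=j}^{r}\alpha\bigl(\overline{M}^{(i-1)};z_i\bigr).$$
Equating the two expressions for $\beta_{1}^{\overline{R}^{(j-1)}}(\overline{M}^{(j-1)})$ yields the identity in (1).

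For part (2), I would sum the identity in (1) over $j=1,\ldots,r$ and swap the order of summation:
$$\sum_{j=1}^{r}\alpha\bigl(\mathrm{C}^{\overline{R}^{(j-1)}}\overline{M}^{(j-1)};z_j\bigr)
= \sum_{j=1}^{r}\sum_{i=j}^{r}\alpha\bigl(\overline{M}^{(i-1)};z_i\bigr)
= \sum_{i=1}^{r} i\,\alpha\bigl(\overline{M}^{(i-1)};z_i\bigr).$$
Then I would invoke Corollary~\ref{cor_r-seq-pred}(3), which, under the same hypothesis $\mathrm{pd}_{\overline{R}^{(r)}}\overline{M}^{(r)}=0$, gives $\beta_1^R(M)=\sum_{i=1}^{r}\alpha(\overline{M}^{(i-1)};z_i)$ and $\beta_2^R(M)=\sum_{i=1}^{r}(n-i)\alpha(\overline{M}^{(i-1)};z_i)$. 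Subtracting yields $n\beta_1^R(M)-\beta_2^R(M)=\sum_{i=1}^{r}i\,\alpha(\overline{M}^{(i-1)};z_i)$, matching the previous display.

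There is essentially no obstacle here: once the reindexing observation (that each tail of a $\gamma$-sequence is again a $\gamma$-sequence for the corresponding reduction) is noted, everything reduces to a combinatorial rearrangement and an appeal to the Poincaré series identity of Corollary~\ref{cor_r-seq-pred}. The only step that requires a slight care is checking that the hypothesis $\mathrm{pd}_{\overline{R}^{(r)}}\overline{M}^{(r)}=0$ is what allows us to drop the $\beta_1^{\overline{R}^{(r)}}(\overline{M}^{(r)})$ term from Lemma~\ref{lem_r-seq-cr}(3)(iii), which is exactly the same input needed to invoke Corollary~\ref{cor_r-seq-pred}(3) in the second part.
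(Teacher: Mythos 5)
Your proof is correct and follows essentially the same route as the paper's: for part (1) you extract the equality $\beta_{1}^{\overline{R}^{(j-1)}}(\overline{M}^{(j-1)})=\alpha(\mathrm{C}^{\overline{R}^{(j-1)}}\overline{M}^{(j-1)};z_j)$ from Lemma~\ref{lem_r-seq-cr}(3)(v), express the same Betti number as a telescoping sum via (iv) (you phrase this as applying (i)$\Leftrightarrow$(iii) to the tail, which is the same computation), and use $\mathrm{pd}_{\overline{R}^{(r)}}\overline{M}^{(r)}=0$ to kill the boundary term; for part (2) you swap the order of summation and compare against the formulas in Corollary~\ref{cor_r-seq-pred}, exactly as the paper does. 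The only cosmetic difference is that you cite Corollary~\ref{cor_r-seq-pred}(3) where the paper quotes (2), but (3) is just the coefficient extraction from (2), so this is not a genuine departure.
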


\begin{proof} (1) We note that Lemma \ref{lem_r-reg-cr} (3) (v).
Summing up the equations in Lemma \ref{lem_r-reg-cr} (3) (iv) from $ i=j$ to $r$, we obtain 
$$
\alpha\left( \mathrm{C}^{\overline{R}^{(j-1)}}\,\overline{M}^{(j-1)};z_j\right)
=\beta_{1}^{\overline{R}^{(j-1)}}\left(\overline{M}^{(j-1)}\right)
=\sum\limits_{i=j}^{r}\alpha\left(\overline{M}^{(i-1)};z_i\right),
$$
since $\beta_{1}^{\overline{R}^{(r)}}\left(\overline{M}^{(r)}\right)=0$ by the assumption.\\
(2) Using (1), we have
$$
 \sum\limits_{j=1}^r\alpha\left( \mathrm{C}^{\overline{R}^{(j-1)}}\,\overline{M}^{(j-1)};z_j\right)
 =\sum\limits_{j=1}^r\sum\limits_{i=j}^{r}\alpha\left(\overline{M}^{(i-1)};z_i\right)
 = \sum\limits_{j=1}^rj\alpha\left( \overline{M}^{(j-1)};z_j\right).
$$
On the other hand, from Corollary \ref{cor_r-seq-pred} (2), we have 
 \begin{align*}
n\beta_1^R(M)-\beta_2^R(M)
&=n\sum\limits_{j=1}^r\alpha\left(\overline{M}^{(j-1)};z_j\right)-\sum\limits_{j=1}^r(n-j)\alpha\left(\overline{M}^{(j-1)};z_j\right)\\
&= \sum\limits_{j=1}^rj\alpha\left( \overline{M}^{(j-1)};z_j\right).
\end{align*}
Hence, the assertion holds.
\end{proof}

\begin{remark}\label{rem_r-reg-cr-pred}
Assume that $R$ is a standard graded Noetherian commutative Koszul $K$-algebra with $\dim_K\mathfrak{m}\otimes k =n$.
If we  impose the additional condition that  $z_1,\ldots,z_r$ form an $R$-regular sequence, 
then the assertions in Lemma \ref{lem_r-seq-cr} and Lemma \ref{lem_r-reg-cr-pred} remain valid 
by Remark \ref{rem_kos-reg} and Remark \ref{rem_r-reg-cr}.
\end{remark}

%
%
\section{$\hat\gamma$-regular elements and $\hat\gamma$-regular sequences}
\label{sec_rhat-elem-seq}

In this section, we introduce $\hat\gamma$-regular elements and $\hat\gamma$-regular sequences for a finitely generated graded module $M$. Under a suitable condition, a $\gamma$-regular sequence becomes a $\hat\gamma$-regular sequence.

\begin{definition and notation}\label{def-n_rhat-elem}
An element $z$ in $\displaystyle\bigcap_{i\geq0}\gamma\text{-}\mathrm{NZD}_R \mathrm{C}_i^R(M)$ is called an $M$-$\hat\gamma$-$regular$ element or $\hat\gamma$-$regular$ for M. We denote
$$
\hat\gamma\text{-}\mathrm{NZD}_R M=\displaystyle\bigcap_{i\geq0}\gamma\text{-}\mathrm{NZD}_R \mathrm{C}_i^R(M)
\text{ the set of }M\text{-}\hat\gamma\text{-regular elements}. 
$$
\end{definition and notation}

\begin{definition}\label{def_str-m-full}Let $V\subseteq U$ be graded modules and $ z\in R_1\setminus\{0\}$.\\
We say that $U$ is $\textit{strongly }\mathfrak{m}\textit{-full}$ in $V$ with respect to $z$ if the following hold:
$$
\mathfrak{m}^{i+1}\,U\underset{V}:z=\mathfrak{m}^{i}\,U\text{ for all }i\geq0,\text{ where }\mathfrak{m}^{0}=R.
$$
\end{definition}

\begin{remark}\label{rem_rhat-elem} The following hold:
\begin{enumerate}[label=(\arabic*)]
\item
$ \hat\gamma\text{-}\mathrm{NZD}_R M\subseteq\gamma\text{-}\mathrm{NZD}_R M$.
\item
$ z\in\hat\gamma\text{-}\mathrm{NZD}_R M$ if and only if $ \mathrm{Syz}_1^RM$ is strongly $\mathfrak{m}\text{-full}$ in $ F_M$ with respect to $z$, where $ \pi:F_M\to M$ is a minimal graded free cover of $ M$.
\end{enumerate}
\end{remark}

\begin{definition and notation}\label{def-n_rhat-seq}
Let $ \underline z=z_1,\ldots,z_r\in R_1\setminus\{0\}$  be a sequence of nonzero linear elements.
We denote
$$\overline R^{(i)}=R/(z_1,\ldots,z_i)R,\;\overline M^{(i)}=M\otimes\overline R^{(i)}\text{ for }i=1,\ldots,r.$$
We say that $\underline z$ is an $M\text{-}\gamma\textit{-regular sequence}$ of length $r$ if the following conditions hold: 
$$
\overline{z_i}\in \hat\gamma\text{-}\mathrm{NZD}_{\overline R^{(i-1)}}\overline M^{(i)},
$$
where $\overline{z_i}$ is a image of $z_i$ in $\overline R^{(i-1)}$ for $i=1,\ldots,r$.
We denote
$$
\hat\gamma\text{-}\mathrm{depth}_R M\text{ the maximal length of }M\text{-}\hat\gamma\text{-regular sequence}.
$$
\end{definition and notation}

\begin{lemma}\label{lem_rhat-seq-pred}
Let $ z_1,\ldots,z_r$ be an $M$-$\gamma$-regular sequence. 
If $  \mathrm{pd}_{\overline{R}^{(r)}}\overline{M}^{(r)}=0$, then the following hold:
\begin{enumerate}[label=(\arabic*)]
\item
$ \mathrm{pd}_{\overline{R}^{(r)}}\overline{\mathrm{C}_i^RM}^{(r)}
=\mathrm{pd}_{\overline{R}^{(r)}}\mathrm{C}_i^{\overline{R}^{(r)}}\overline{M}^{(r)}
=0$ for all $i\geq0$.
\item
$ z_1,\ldots, z_r$ is a $\mathrm{C}^RM$-$\gamma$-regular sequence.
\end{enumerate}
\end{lemma}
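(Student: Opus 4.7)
The plan is to treat the two assertions in turn. Assertion (1) is essentially a projective-dimension computation that exploits the hypothesis $\mathrm{pd}_{\overline{R}^{(r)}}\overline{M}^{(r)} = 0$ together with an iterated form of Remark~\ref{rem_Ci}, while (2) follows directly from unfolding the definition of $\hat\gamma$-regularity. (The hypothesis ``$M$-$\gamma$-regular sequence'' here should be read as ``$M$-$\hat\gamma$-regular sequence'' in view of Definition~\ref{def-n_rhat-seq}, where the displayed condition is phrased via $\hat\gamma\text{-}\mathrm{NZD}$.)

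For (1), I would first establish by induction on $r$ the identification
$$\overline{\mathrm{C}_i^R M}^{(r)} \simeq \mathrm{C}_i^{\overline{R}^{(r)}} \overline{M}^{(r)}$$
for each $i\geq 0$. The base case is exactly Remark~\ref{rem_Ci}, and the inductive step applies the same remark to $\mathrm{C}_i^{\overline{R}^{(j)}} \overline{M}^{(j)}$ with ground ring $\overline{R}^{(j)}$ and the linear form $\overline{z_{j+1}}$, using that $\overline{z_{j+1}}$ is $\gamma$-regular for $\overline{M}^{(j)}$ (which we get from $\hat\gamma$-regularity via Remark~\ref{rem_rhat-elem}(1)). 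Once the identification is in place, the hypothesis $\mathrm{pd}_{\overline{R}^{(r)}}\overline{M}^{(r)} = 0$ forces $\overline{M}^{(r)}$ to be a free $\overline{R}^{(r)}$-module, whence $\mathrm{Syz}_1^{\overline{R}^{(r)}}\overline{M}^{(r)} = 0$. Notation~\ref{n-5} then yields $\mathrm{C}_i^{\overline{R}^{(r)}}\overline{M}^{(r)} = F_{\overline{M}^{(r)}}/\mathfrak{m}^i\cdot 0 = F_{\overline{M}^{(r)}}$, which is free, and the common projective dimension is $0$.

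For (2), I need to verify that for each $i = 1,\ldots,r$ the class $\overline{z_i}$ lies in $\gamma\text{-}\mathrm{NZD}_{\overline{R}^{(i-1)}}\, \overline{\mathrm{C}^R M}^{(i-1)}$. Applying the same iterated form of Remark~\ref{rem_Ci}, now to $\mathrm{C}^R M$ in place of $\mathrm{C}_i^R M$, this condition rewrites as $\overline{z_i} \in \gamma\text{-}\mathrm{NZD}_{\overline{R}^{(i-1)}}\, \mathrm{C}^{\overline{R}^{(i-1)}} \overline{M}^{(i-1)}$. Since $z_1,\ldots,z_r$ is an $M$-$\hat\gamma$-regular sequence, Definition~\ref{def-n_rhat-seq} gives $\overline{z_i} \in \hat\gamma\text{-}\mathrm{NZD}_{\overline{R}^{(i-1)}} \overline{M}^{(i-1)}$, and Definition~\ref{def-n_rhat-elem} then places $\overline{z_i}$ in $\gamma\text{-}\mathrm{NZD}_{\overline{R}^{(i-1)}} \mathrm{C}_j^{\overline{R}^{(i-1)}} \overline{M}^{(i-1)}$ for every $j\geq 0$. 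Taking $j = 1$ is exactly what is needed.

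The main technical point, really only notational, is carrying out the iterated application of Remark~\ref{rem_Ci} correctly across the successive quotients, and keeping the role of the $\hat\gamma$-hypothesis distinct from the weaker $\gamma$-conclusion of (2). Part (1) genuinely invokes the projective-dimension hypothesis, whereas part (2) does not; neither assertion requires machinery beyond Remark~\ref{rem_Ci} and the two definitions opening the section.
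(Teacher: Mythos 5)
Your proof of part~(1) is fine and essentially parallels the paper's: both reduce to the formal identification
$$\overline{\mathrm{C}_i^R M}^{(r)} \simeq \mathrm{C}_i^{\overline{R}^{(r)}} \overline{M}^{(r)} \simeq F_M\bigl/\bigl(\mathfrak{m}^i\mathrm{Syz}_1^RM+(z_1,\ldots,z_r)F_M\bigr),$$
and then invoke freeness of $\overline{M}^{(r)}$ to force $\mathrm{Syz}_1^RM\subseteq (z_1,\ldots,z_r)F_M$. (A small point: the iterated form of Remark~\ref{rem_Ci} is a purely formal quotient computation and does not require any $\gamma$- or $\hat\gamma$-regularity of the $z_j$, so the detour through Remark~\ref{rem_rhat-elem}(1) is unnecessary.)

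Part~(2), however, has a genuine gap, stemming from your opening reinterpretation of the hypothesis. You propose to read ``$M$-$\gamma$-regular sequence'' as ``$M$-$\hat\gamma$-regular sequence,'' citing the mislabeled Definition~\ref{def-n_rhat-seq}. But that typographical slip lives only inside Definition~\ref{def-n_rhat-seq}; the hypothesis of the present lemma genuinely is ``$\gamma$-regular'' in the sense of Definition~\ref{def_r-seq}. This is not an incidental choice: Lemma~\ref{lem_rhat-seq-pred}~(2) is the engine behind Proposition~\ref{prop_rhat-seq-pred}, whose proof applies part~(2) repeatedly (together with $\mathrm{C}_{i+1}^RM=\mathrm{C}^R(\mathrm{C}_i^RM)$ and part~(1)) to \emph{upgrade} a $\gamma$-sequence to a $\hat\gamma$-sequence under the projective-dimension hypothesis. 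If the hypothesis were already $\hat\gamma$, then part~(2) would be a one-line unwinding of Definition~\ref{def-n_rhat-elem} (as you give), and Proposition~\ref{prop_rhat-seq-pred} would be vacuous. Under the actual hypothesis, part~(2) is substantive and the paper proves it by a numerical argument: one uses Lemma~\ref{lem_r-seq-cr}~(3)(ii) to reduce to the inequality
$$\beta_{1}^R(\mathrm{C}^RM)\geq \sum_{j=1}^{r}\alpha\bigl(\overline{\mathrm{C}^RM}^{(j-1)};z_j\bigr),$$
then identifies the right-hand side with $n\beta_1^R(M)-\beta_2^R(M)$ via Lemma~\ref{lem_r-reg-cr-pred}~(2), and finally bounds $\beta_1^R(\mathrm{C}^RM)$ from below by tensoring the exact sequence $0\to \mathrm{Tor}_1^R(M,k)\to \mathrm{C}^RM\to M\to 0$ with $k$. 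Your closing remark that ``part~(2) does not'' require the projective-dimension hypothesis is therefore also incorrect: the hypothesis enters both through part~(1) (to kill $\beta_1^{\overline R^{(r)}}(\overline{\mathrm{C}^R M}^{(r)})$) and through Lemma~\ref{lem_r-reg-cr-pred}, whose conclusions are stated only under $\mathrm{pd}_{\overline R^{(r)}}\overline M^{(r)}=0$.
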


\begin{proof}
(1): Let $ F_M\to M$ be  a minimal graded free cover of $ M$. Then we note  the following:
$$
\overline{\mathrm{C}_i^RM}^{(r)} \simeq 
\mathrm{C}_i^{\overline{R}^{(r)}}\overline{M}^{(r)} 
\simeq F_M/\left(\mathfrak{m}^i\mathrm{Syz}_1^RM+(z_1,\ldots,z_r)F_M\right)
\text{ for  } i\geq0.
$$
Since $  \mathrm{pd}_{\overline{R}^{(r)}}\overline{M}^{(r)}=0$, 
we have $ \mathfrak{m}^i\mathrm{Syz}_1^RM\subseteq \mathrm{Syz}_1^RM\subseteq (z_1,\ldots,z_r)F_M$.
Hence 
$  \mathrm{pd}_{\overline{R}^{(r)}}\overline{\mathrm{C}_i^RM}^{(r)}
=\mathrm{pd}_{\overline{R}^{(r)}}\mathrm{C}_i^{\overline{R}^{(r)}}\overline{M}^{(r)}=0$
 for all $ i\geq0$.\\
(2) Since $\beta_{1}^{\overline{R}^{(r)}}(\overline{\mathrm{C}^RM}^{(r)})=0$, using Lemma \ref{lem_r-seq-cr} (3) (ii), it is enough to show that the following inequality holds:
$$
\beta_{1}^R(\mathrm{C}^RM)\geq 
\sum\limits_{j=1}^{r}\alpha\left(\overline{\mathrm{C}^RM}^{(j-1)};z_j\right)
=\sum\limits_{j=1}^r\alpha\left( \mathrm{C}^{\overline{R}^{(j-1)}}\,\overline{M}^{(j-1)};z_i\right).
$$
Since $ \mathrm{C}^{R}M\simeq F_M/\mathfrak{m}\mathrm{Syz}_1^{R}M$ 
and $ \mathrm{Tor}_1^R(M, k)\simeq \mathrm{Syz}_1^{R}M/\mathfrak{m}\mathrm{Syz}_1^{R}M$, 
 we have the following exact sequence:
$$ 
0\to \mathrm{Tor}_1^R(M, k) \to \mathrm{C}^{R}M \to M \to 0 .
$$
Applying the functor $ \_\otimes k$ on the above exact sequence,  we obtain the following exact sequence:
 $$
 \mathrm{Tor}_2^R(M, k) \to \mathrm{Tor}_1^R(k,\mathrm{Tor}_1^R(M, k)) \to \mathrm{Tor}_1^R(\mathrm{C}^{R}M, k) \to 0 .
 $$
Using Lemma \ref{lem_r-reg-cr-pred} (2) and the above exact sequence, we obtain the following inequality:
$$
\beta_1^R\left(\mathrm{C}^{R}M\right) \geq n\beta_1^R(M)-\beta_2^R(M) 
= \sum\limits_{i=1}^r\alpha\left( \mathrm{C}^{\overline{R}^{(j-1)}}\,\overline{M}^{(j-1)};z_i\right),
$$
since  $\mathrm{Tor}_1^R(k,\mathrm{Tor}_1^R(k,M))\simeq \mathrm{Tor}_1^R(k,k)^{\oplus \beta_1^R(M)}$ up to shifting.
\end{proof}

\begin{proposition}\label{prop_rhat-seq-pred}
If $ \underline{z}=z_1,\ldots,z_r$ is a $ M\text{-}\gamma\text{-sequence}$ 
with $  \mathrm{pd}_{\overline{R}^{(r)}}\overline{M}^{(r)}=0$, 
then $\underline{z}$ is also a $ M\text{-}\hat\gamma\text{-sequence}$.
\end{proposition}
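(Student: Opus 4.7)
The plan is to reduce to Lemma \ref{lem_rhat-seq-pred} by an induction on the index $k$ of the module $\mathrm{C}_k^R M$. Concretely, I will prove that for every $k \geq 0$ the sequence $\underline{z} = z_1, \ldots, z_r$ is a $\mathrm{C}_k^R M$-$\gamma$-regular sequence. Granted this, for each $i \in \{1, \ldots, r\}$ one has
$$
\overline{z_i} \in \gamma\text{-}\mathrm{NZD}_{\overline{R}^{(i-1)}}\overline{\mathrm{C}_k^R M}^{(i-1)} = \gamma\text{-}\mathrm{NZD}_{\overline{R}^{(i-1)}} \mathrm{C}_k^{\overline{R}^{(i-1)}} \overline{M}^{(i-1)},
$$
where the equality follows from iterating Remark \ref{rem_Ci}. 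Intersecting over all $k \geq 0$ yields $\overline{z_i} \in \hat\gamma\text{-}\mathrm{NZD}_{\overline{R}^{(i-1)}} \overline{M}^{(i-1)}$ by the Definition/Notation \ref{def-n_rhat-elem}, which is precisely the condition that $\underline{z}$ is an $M$-$\hat\gamma$-regular sequence.

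For the induction, the base case $k = 0$ is the hypothesis that $\underline{z}$ is an $M$-$\gamma$-regular sequence with $\mathrm{pd}_{\overline{R}^{(r)}} \overline{M}^{(r)} = 0$. For the inductive step, assume $\underline{z}$ is a $\mathrm{C}_k^R M$-$\gamma$-regular sequence. Lemma \ref{lem_rhat-seq-pred}(1) applied to the original $M$ gives
$$
\mathrm{pd}_{\overline{R}^{(r)}} \overline{\mathrm{C}_k^R M}^{(r)} = \mathrm{pd}_{\overline{R}^{(r)}} \mathrm{C}_k^{\overline{R}^{(r)}} \overline{M}^{(r)} = 0,
$$
so the hypotheses of Lemma \ref{lem_rhat-seq-pred}(2) are satisfied with $\mathrm{C}_k^R M$ playing the role of $M$. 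Its conclusion is that $\underline{z}$ is a $\mathrm{C}^R(\mathrm{C}_k^R M)$-$\gamma$-regular sequence; by Notation \ref{n-5} this is nothing but $\mathrm{C}_{k+1}^R M$, finishing the inductive step.

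The main obstacle I anticipate is purely bookkeeping: one must confirm that both the hypothesis "projective dimension zero after reduction" and the hypothesis "$\gamma$-regular sequence" transfer cleanly from $M$ to every $\mathrm{C}_k^R M$, and that the tower of modules obtained by repeatedly applying $\mathrm{C}^R$ matches the tower $\{\mathrm{C}_k^R M\}_{k \geq 0}$ used to define $\hat\gamma$-regularity. Both points are handled by parts (1) and (2) of Lemma \ref{lem_rhat-seq-pred} together with the identification $\overline{\mathrm{C}_k^R M}^{(i)} \simeq \mathrm{C}_k^{\overline{R}^{(i)}} \overline{M}^{(i)}$ from Remark \ref{rem_Ci}, so beyond these identifications no further computation is required.
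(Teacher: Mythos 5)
Your proof is correct and is essentially the paper's own argument, just with the induction and the final unwinding of the definition of $\hat\gamma$-regularity made explicit. The paper's proof reads in full ``Since $\mathrm{C}_{i+1}^R M = \mathrm{C}^R(\mathrm{C}_i^R M)$ and $\mathrm{pd}_{\overline{R}^{(r)}}\overline{\mathrm{C}_i^R M}^{(r)} = 0$ by Lemma \ref{lem_rhat-seq-pred} (1), applying Lemma \ref{lem_rhat-seq-pred} (2) repeatedly, we see that $\underline{z}$ is a $\mathrm{C}_i^R M$-$\gamma$-sequence for all $i \in \mathbb{Z}_{\geq 0}$,'' which is exactly your induction compressed into one sentence, and leaves implicit the identification $\overline{\mathrm{C}_k^R M}^{(i-1)} \simeq \mathrm{C}_k^{\overline{R}^{(i-1)}} \overline{M}^{(i-1)}$ from Remark \ref{rem_Ci} that you spell out.
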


\begin{proof}
Since $ \mathrm{C}_{i+1}^RM=\mathrm{C}^R\left(\mathrm{C}_i^RM\right)$ 
and $  \mathrm{pd}_{\overline{R}^{(r)}}\overline{\mathrm{C}_i^RM}^{(r)}=0$  
by Lemma $\ref{lem_rhat-seq-pred}$ (1), 
applying Lemma $\ref{lem_rhat-seq-pred}$ (2) repeatedly,  
we see that $\underline{z}$ is a $ \mathrm{C}_i^RM\text{-}\gamma\text{-sequence}$  
for all $ i\in\mathbb{Z}_{\geq0}$.

\end{proof}

\begin{remark}\label{rem_rhat-seq-pred}
Assume that $R$ is a standard graded Noetherian commutative Koszul $K$-algebra.
If we  impose the additional condition that  $z_1,\ldots,z_r$ form an $R$-regular sequence, 
then the assertions in Lemma \ref{lem_rhat-seq-pred} and Proposition \ref{prop_rhat-seq-pred} 
remain valid 
by Remark \ref{rem_r-reg-cr-pred}.
\end{remark}

%
%
\section{Modules with a first syzygy generated in a single degree}
\label{sec_single-gen}

In this section, we partially address the main theorem by presenting Proposition \ref{prop_sdg} for modules whose first syzygy is generated in a single degree.

\begin{remark}\label{rem_nzd} The following hold:
\begin{enumerate}[label=(\arabic*)]
\item
$\mathrm{NZD}_RM_{\geq j} \subseteq \mathrm{NZD}_RM/\Gamma_{\mathfrak{m}}(M)$  for any $j\in \mathbb{Z}$.\\
In fact, $\dim_K \left(0\underset{M}:z\right)<\infty$ for $z\in\mathrm{NZD}_RM_{\geq j}$ since $0\underset{M}:z \subseteq M_{\leq j-1}$.
\item
Let $r\in\mathbb{Z}$. The following are equivalent:
\begin{enumerate}[label=(\roman*)]
\item
$\mathrm{NZD}_RM_{\geq r}\ne \emptyset$.
\item
$M_{\geq r}\cap\Gamma_{\mathfrak{m}}(M)=0$.
\item
$\Gamma_{\mathfrak{m}}(M)_{\geq r}=0$.
\item
$\mathrm{deg}\Gamma_{\mathfrak{m}}(M)=\mathrm{deg}\,\mathrm{soc}M\leq r-1$.
\end{enumerate}
\item
If $\mathrm{NZD}_RM_{\geq r}\ne \emptyset$ for some $r\in\mathbb{Z}$, then 
$\mathrm{NZD}_RM_{\geq j}=\mathrm{NZD}_RM/\Gamma_{\mathfrak{m}}(M)$ for all $j\geq r$.
In fact, from (1), the following hold: 
$$
\mathrm{NZD}_RM_{\geq r}\subseteq
\mathrm{NZD}_RM_{\geq j} \subseteq
\mathrm{NZD}_RM/\Gamma_{\mathfrak{m}}(M)
\text{ for all } j\geq r.
 $$
 On the other hand, $\mathrm{NZD}_RM/\Gamma_{\mathfrak{m}}(M)\subseteq \mathrm{NZD}_RM_{\geq r}$
 since we have
 $$
 M_{\geq r}\simeq
 \left(M/\Gamma_{\mathfrak{m}}(M)\right)_{\geq r}\subseteq
 M/\Gamma_{\mathfrak{m}}(M)
$$ 
from the following exact sequence: 
$$
0\to\Gamma_{\mathfrak{m}}(M)_{\geq r}\to M_{\geq r} \to  \left(M/\Gamma_{\mathfrak{m}}(M)\right)_{\geq r} \to 0,
$$
where $\Gamma_{\mathfrak{m}}(M)_{\geq r}=0$ by (2).
\end{enumerate}
\end{remark}

\begin{lemma}\label{lem_nzd} 
Let $\mathrm{deg}\,\mathrm{Syz}_1^RM\otimes k =d$ and $\mathrm{indeg}\,\mathrm{Syz}_1^RM\otimes k =d_1$.
The following hold:
$$
\mathrm{NZD}_RM_{\geq d_1}\subseteq
\gamma\text{-}\mathrm{NZD}_RM\subseteq
\mathrm{NZD}_RM_{\geq d}.
$$
\end{lemma}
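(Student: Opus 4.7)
\textbf{Proof plan for Lemma \ref{lem_nzd}.}
The plan is to translate both inclusions into the $\mathfrak{m}$-full characterization in Lemma \ref{lem_r-reg-cr}(2)(iv), which says that for $\pi:F_M\to M$ a minimal graded free cover, $z\in\gamma\text{-}\mathrm{NZD}_RM$ is equivalent to $\mathfrak{m}\mathrm{Syz}_1^RM\underset{F_M}{:}z=\mathrm{Syz}_1^RM$. The two facts about degrees to exploit are: (a) every nonzero element of $\mathfrak{m}\mathrm{Syz}_1^RM$ has degree $\geq d_1+1$, since minimal generators of $\mathrm{Syz}_1^RM$ start in degree $d_1$; and (b) every element of $\mathrm{Syz}_1^RM$ in degree $>d$ lies in $\mathfrak{m}\mathrm{Syz}_1^RM$, since minimal generators of $\mathrm{Syz}_1^RM$ are concentrated in degrees $\leq d$, so any element of higher degree is a combination of these generators with coefficients of positive degree.

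For the first inclusion, assume $z\in\mathrm{NZD}_RM_{\geq d_1}$; note that $z$ is automatically a non-zero-divisor on the free module $F_M$. Take a homogeneous $f\in F_M$ with $zf\in\mathfrak{m}\mathrm{Syz}_1^RM$. If $zf=0$, then $f=0\in\mathrm{Syz}_1^RM$. Otherwise fact (a) forces $\deg(zf)\geq d_1+1$, hence $\deg f\geq d_1$, so $\pi(f)\in M_{\geq d_1}$; but $z\pi(f)=\pi(zf)=0$ and $z\in\mathrm{NZD}_RM_{\geq d_1}$, giving $\pi(f)=0$ and therefore $f\in\mathrm{Syz}_1^RM$. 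This proves $\mathfrak{m}\mathrm{Syz}_1^RM\underset{F_M}{:}z\subseteq\mathrm{Syz}_1^RM$; the reverse inclusion is trivial, so $z\in\gamma\text{-}\mathrm{NZD}_RM$ by Lemma \ref{lem_r-reg-cr}(2)(iv).

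For the second inclusion, assume $z\in\gamma\text{-}\mathrm{NZD}_RM$, so $\mathfrak{m}\mathrm{Syz}_1^RM\underset{F_M}{:}z=\mathrm{Syz}_1^RM$. Take a homogeneous $\xi\in M_{\geq d}$ with $z\xi=0$, and lift it to a homogeneous $f\in F_M$ with $\pi(f)=\xi$ and $\deg f=\deg\xi\geq d$. Then $zf\in\mathrm{Syz}_1^RM$ has degree $\geq d+1$, so fact (b) puts $zf\in\mathfrak{m}\mathrm{Syz}_1^RM$. The $\mathfrak{m}$-full condition now yields $f\in\mathrm{Syz}_1^RM$, hence $\xi=\pi(f)=0$, which shows $z\in\mathrm{NZD}_RM_{\geq d}$.

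The argument is pure degree bookkeeping once the $\mathfrak{m}$-full reformulation is in place; there is no real obstacle. The only subtlety is to remember to reduce to homogeneous $f$ or $\xi$ so that statements about membership in graded subquotients like $M_{\geq d_1}$ or $\mathfrak{m}\mathrm{Syz}_1^RM$ can be converted into strict inequalities on degrees.
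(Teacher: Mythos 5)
Your proof is correct and takes essentially the same approach as the paper: both routes pass through the $\mathfrak{m}$-full characterization of Lemma \ref{lem_r-reg-cr}(2)(iv) and then exploit that the minimal generators of $\mathrm{Syz}_1^RM$ live in degrees between $d_1$ and $d$. The paper packages the degree bookkeeping into exact sequences produced by the snake lemma (Observation \ref{obs_nzd}), while you carry out the equivalent diagram chase element by element; the content is the same.
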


Before proving Lemma \ref{lem_nzd} above, we make the following observation:

\begin{observation}\label{obs_nzd} 
Let Let $\mathrm{deg}\,\mathrm{Syz}_1^RM\otimes k =d$, 
$\mathrm{indeg}\,\mathrm{Syz}_1^RM\otimes k =d_1$ and 
$\pi:F_M\to M$ be a minimal graded free cover of $ M$. 
We denote $U=\mathrm{Syz}_1^RM$ and $X=0\underset{M}:z$ for $ z\in R_1\setminus\{0\}$. 
There are  following exact sequences:
\begin{align}
(\text{I}) \quad0\to& U/\mathfrak{m}U \overset{\iota}
\to \left( \mathfrak{m}U\underset{F_M}:z\right)/\mathfrak{m}U \to X_{\geq d_1}\notag\\
(\text{II}) \quad0\to& \left(U/\mathfrak{m}U\right)_{\geq d} 
\overset{\iota_{\geq d}}\to \left(\left( \mathfrak{m}U\underset{F_M}:z\right)/\mathfrak{m}U\right)_{\geq d}
\to X_{\geq d} \to 0,\notag
\end{align}
which are induced as follows:\\
Applying snake lemma to the following commutative diagram with exact rows:
$$
\begin{tikzcd}[row sep=0.6cm,column sep=0.6cm]
0\arrow[r]
&U/\mathfrak{m}U \arrow[d,"\times z"]\arrow[r] 
&F_M/\mathfrak{m}U \arrow[d,"\times z"]\arrow[r]
& M \arrow[d, "\times z"]\arrow[r]
& 0\;\\
0\arrow[r]
&\left(U/\mathfrak{m}U\right)(1) \arrow[r] 
&\left(F_M/\mathfrak{m}U\right)(1) \arrow[r]
& M(1)\arrow[r] 
& 0,
\end{tikzcd}
$$
we have the following exact sequence for each $j\in\mathbb{Z}$:
$$
0\to \left(U/\mathfrak{m}U\right)_{\geq j} 
\to \left(\left( \mathfrak{m}U\underset{F_M}:z\right)/\mathfrak{m}U\right)_{\geq j}
\to X_{\geq j} \to \left(U/\mathfrak{m}U\right)_{\geq j+1} .
$$
If $j=d$, then $\left(U/\mathfrak{m}U\right)_{\geq d+1}=0$. Hence we obtain (II).\\
Here we remark that 
$\left( \mathfrak{m}U\underset{F_M}:z\right)_{\geq j}=
\left( \mathfrak{m}U\right)_{\geq j+1}\underset{F_M}:z$ for $j\in\mathbb{Z}$, 
since $z$ is a $F_M$-regular element.
If $j=d_1$, then the following hold:
$$
\left(U/\mathfrak{m}U\right)_{\geq d_1}\simeq
U_{\geq d_1}/\left(\mathfrak{m}U\right)_{\geq d_1}=
U/\mathfrak{m}U.
$$
$$
\left(\left( \mathfrak{m}U\underset{F_M}:z\right)/\mathfrak{m}U\right)_{\geq d_1}\simeq
\left( \left(\mathfrak{m}U\right)_{\geq d_1+1}\underset{F_M}:z\right)/\left(\mathfrak{m}U\right)_{\geq d_1}=
\left( \mathfrak{m}U\underset{F_M}:z\right)/\mathfrak{m}U.
$$
Hence we obtain (I).

\end{observation}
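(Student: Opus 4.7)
The plan is to realise both sequences (I) and (II) as degree-truncations of a single six-term snake-lemma sequence. First I would form the tautological short exact sequence
$$ 0 \to U/\mathfrak{m}U \to F_M/\mathfrak{m}U \to M \to 0, $$
obtained from $0 \to U \to F_M \to M \to 0$ by modding out $\mathfrak{m}U$ in the first two terms; exactness is clear because $(F_M/\mathfrak{m}U)/(U/\mathfrak{m}U) \simeq F_M/U \simeq M$. Multiplication by $z$ then produces the commutative diagram with exact rows displayed in the Observation.

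Next I would apply the snake lemma to that diagram. Since $z \in \mathfrak{m}$ annihilates $U/\mathfrak{m}U$, the leftmost vertical map is zero and its kernel is all of $U/\mathfrak{m}U$. The kernel of $\times z$ on $M$ is $X$ by definition, and the kernel of $\times z$ on $F_M/\mathfrak{m}U$ is $\left(\mathfrak{m}U\underset{F_M}:z\right)/\mathfrak{m}U$. Assembling these yields the exact sequence
$$ 0 \to U/\mathfrak{m}U \overset{\iota}{\to} \left(\mathfrak{m}U\underset{F_M}:z\right)/\mathfrak{m}U \to X \overset{\partial}{\to} (U/\mathfrak{m}U)(1) \to \cdots $$
Because the truncation functor $(-)_{\geq j}$ on graded modules is exact (a short exact sequence is exact in each internal degree, and summing over degrees $\geq j$ preserves exactness), truncating the above sequence at $j$ produces the four-term sequence recorded in the Observation, with the last arrow being $X_{\geq j}\to (U/\mathfrak{m}U)_{\geq j+1}$.

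For sequence (I) I would set $j = d_1$. The identification $(U/\mathfrak{m}U)_{\geq d_1} = U/\mathfrak{m}U$ is immediate from the definition of $d_1$. For the middle term I would observe that $\mathfrak{m}U$ lives in degrees $\geq d_1 + 1$: if $\xi \in F_M$ has a homogeneous component $\xi_i$ of degree $i \leq d_1 - 1$ with $z\xi \in \mathfrak{m}U$, then $z\xi_i \in (\mathfrak{m}U)_{i+1} = 0$, and the $F_M$-regularity of $z$ forces $\xi_i = 0$; hence $\left(\mathfrak{m}U\underset{F_M}:z\right)/\mathfrak{m}U$ is concentrated in degrees $\geq d_1$ and equals its own $d_1$-truncation, which yields (I). For sequence (II) I would take $j = d$: since $(U/\mathfrak{m}U)(1)_{\geq d} = (U/\mathfrak{m}U)_{\geq d+1} = 0$ by the definition of $d$, the connecting map truncates to zero, so the four-term sequence collapses into the short exact sequence (II).

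No serious obstacle is expected; the only mildly delicate point is the internal-degree bookkeeping for $\left(\mathfrak{m}U\underset{F_M}:z\right)/\mathfrak{m}U$ in the case $j = d_1$, which uses only the regularity of $z$ on the free module $F_M$ together with the degree bounds on the minimal generators of $U$.
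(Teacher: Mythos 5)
Your proposal is correct and follows essentially the same route as the paper: the same tautological sequence $0\to U/\mathfrak{m}U\to F_M/\mathfrak{m}U\to M\to 0$, the snake lemma for multiplication by $z$, exactness of the truncation $(-)_{\geq j}$, and the specializations $j=d$ (killing the connecting map) and $j=d_1$. Your componentwise argument that $\left(\mathfrak{m}U\underset{F_M}:z\right)$ lives in degrees $\geq d_1$ is just a rephrasing of the paper's identity $\left(\mathfrak{m}U\underset{F_M}:z\right)_{\geq j}=\left(\mathfrak{m}U\right)_{\geq j+1}\underset{F_M}:z$, both resting on the $F_M$-regularity of $z$.
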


Using the above Observation \ref{obs_nzd}, we prove Lemma  \ref{lem_nzd}.

\begin{proof}[Proof of Lemma \ref{lem_nzd}] 
We note that $z\in \mathrm{NZD}_RM_{\geq j}$ for $j\in\mathbb{Z}$ if and only if 
$X_{\geq j}=\left(0\underset{M}:z\right)_{\geq j}=0$. 
If $z\in \mathrm{NZD}_RM_{\geq d_1}$, 
then $\iota$ in the exact sequence of Observation \ref{obs_nzd} (I) is an isomorphism,
 since $X_{\geq d_1}=0$.
Using Lemma \ref{lem_m-full} and Lemma \ref{lem_r-reg-cr} (2), we conclude that $z\in \gamma\text{-}\mathrm{NZD}_RM$. 
If $z\in \gamma\text{-}\mathrm{NZD}_RM$, then again using Lemma \ref{lem_m-full} and Lemma \ref{lem_r-reg-cr} (2), 
$\iota_{\geq d}$ in the exact sequence of Observation \ref{obs_nzd} (II) is an isomorphism.
Hence, $X_{\geq d}=0$. Therefore, we see that $z\in \mathrm{NZD}_RM_{\geq d}$.
 \qed
\end{proof}

As a corollary of the preceding Lemma \ref{lem_nzd} and Remark \ref{rem_nzd}, we obtain the following results.

\begin{corollary}\label{cor_nzd} 
If $\mathbf{deg}\left(\mathrm{Syz}_1^RM\otimes k\right) = \{d\}$,  then the following hold:
\begin{enumerate}[label=(\arabic*)]
\item
$\gamma\text{-}\mathrm{NZD}_RM=\mathrm{NZD}_RM_{\geq d}$.
\item
The following are equivalent:
\begin{enumerate}[label=(\roman*)]
\item
$\gamma\text{-}\mathrm{NZD}_RM\neq \emptyset$.
\item
$\mathrm{deg}\,\mathrm{soc}M\leq d-1$.
\end{enumerate}
\item
If $\gamma\text{-}\mathrm{NZD}_RM\neq \emptyset$, then 
$\gamma\text{-}\mathrm{NZD}_RM=\mathrm{NZD}_RM/\Gamma_{\mathfrak{m}}(M)$.
\end{enumerate}
\end{corollary}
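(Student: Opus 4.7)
The plan is to derive the corollary essentially as a direct specialization of Lemma \ref{lem_nzd} together with the three-part Remark \ref{rem_nzd}. The hypothesis $\mathbf{deg}(\mathrm{Syz}_1^RM\otimes k)=\{d\}$ says the generators of the first syzygy live in a single degree, so in the notation of Lemma \ref{lem_nzd} we have $d_1=d$. This forces the two outer terms in
\[
\mathrm{NZD}_RM_{\geq d_1}\subseteq \gamma\text{-}\mathrm{NZD}_RM\subseteq \mathrm{NZD}_RM_{\geq d}
\]
to coincide, which immediately yields part (1).

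For part (2), I would simply chain (1) with Remark \ref{rem_nzd}(2): the set $\mathrm{NZD}_RM_{\geq d}$ is nonempty if and only if $\Gamma_{\mathfrak{m}}(M)_{\geq d}=0$, which by that same remark is equivalent to $\deg\,\mathrm{soc}\,M\leq d-1$. Since $\gamma\text{-}\mathrm{NZD}_RM$ equals $\mathrm{NZD}_RM_{\geq d}$ by (1), the biconditional in (2) follows at once.

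For part (3), under the standing assumption $\gamma\text{-}\mathrm{NZD}_RM\neq\emptyset$, part (1) tells us $\mathrm{NZD}_RM_{\geq d}\neq\emptyset$, so Remark \ref{rem_nzd}(3) (applied with $r=d$) gives $\mathrm{NZD}_RM_{\geq d}=\mathrm{NZD}_RM/\Gamma_{\mathfrak{m}}(M)$, and combining with (1) produces the desired equality.

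There is no real obstacle here: all three statements are short consequences of Lemma \ref{lem_nzd} once one observes that $d_1=d$ collapses its sandwich to an equality, after which Remark \ref{rem_nzd} handles the remaining bookkeeping. The only thing to be careful about is not conflating $\gamma\text{-}\mathrm{NZD}_RM$ with $\mathrm{NZD}_RM$ itself — the identification is with the non-zero-divisors on the quotient $M/\Gamma_{\mathfrak{m}}(M)$, as supplied by Remark \ref{rem_nzd}(3).
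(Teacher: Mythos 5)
Your proposal is correct and matches the paper's intent exactly: the paper states this result as a direct corollary of Lemma \ref{lem_nzd} and Remark \ref{rem_nzd} without giving a separate proof, and your argument — collapsing the sandwich in Lemma \ref{lem_nzd} since $d_1=d$, then invoking Remark \ref{rem_nzd}(2) and (3) with $r=d$ — is precisely the intended derivation.
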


\begin{remark}\label{rem_zar-open}
Let $M$ and $M'$ be two finitely generated graded $R$-modules. 
Assume that 
$\mathbf{deg}\left(\mathrm{Syz}_1^R M\otimes k\right) = \{d\}$ 
and 
$\mathbf{deg}\left(\mathrm{Syz}_1^R M'\otimes k\right) = \{d'\}$ 
for some $d, d' \in \mathbb{Z}$. Then the following hold:
\begin{enumerate}[label=(\arabic*)]
\item
$\gamma\text{-}\mathrm{NZD}_R M = \mathrm{NZD}_R\,M_{\geq d} \subseteq R_1$
is a Zariski open set, 
since $\mathfrak{p} \cap R_1$ is a $K$-linear subspace 
for each associated prime $\mathfrak{p}$ of  $M_{\geq d}$ when $M_{\geq d} \neq 0$.
\item
If $\gamma\text{-}\mathrm{NZD}_R M, \gamma\text{-}\mathrm{NZD}_R M' \neq \emptyset$, 
then $\gamma\text{-}\mathrm{NZD}_R M \cap \gamma\text{-}\mathrm{NZD}_R M' \neq \emptyset$  
by (1),  since $K$ is an infinite field.
\end{enumerate}
\end{remark}

\begin{notation}\label{n-7}  Let $N$ be a graded submodule of $M$ and $ z\in R_1\setminus\{0\}$ be a nonzero linear element.
$$
 N\underset{M}:\mathfrak{m}^{\infty}=\left\{\xi\in M \mid  \mathfrak{m}^{i}\xi\subseteq N\right\}
 \text{  the satulation of }N\text{ in }M.
 $$
\end{notation}

\begin{remark}\label{rem_sat}
Let $\pi:F_M\to M$ be a minimal graded free cover of $ M$.\\
We denote  
$U=\mathrm{Syz}_1^RM$ and $\widetilde{U}=\mathfrak{m}^iU\underset{F_M}:\mathfrak{m}^{\infty}$ 
for $i\geq0$. The following hold:\\
$$
M/\Gamma_{\mathfrak{m}}(M)\simeq
M/\Gamma_{\mathfrak{m}}\left(\mathrm{C}_i^RM\right)\simeq
F_M/\widetilde{U}
\text{ for }i\geq0.
$$

\end{remark}

\begin{lemma}\label{lem_nzd-sg} 
If $\mathbf{deg}\left(\mathrm{Syz}_1^RM\otimes k\right) = \{d\}$,  then the following hold:
\begin{enumerate}[label=(\arabic*)]
\item
$M_{\geq d+i} \simeq \left(\mathrm{C}_i^RM\right)_{\geq d+i}$ for all $i\geq0$.
\item
$\gamma\text{-}\mathrm{NZD}_R\mathrm{C}_i^RM=\mathrm{NZD}_RM_{\geq d+i}$ for all $i\geq0$.
\item
$\gamma\text{-}\mathrm{NZD}_R\mathrm{C}_{i}^RM\subset
\gamma\text{-}\mathrm{NZD}_R\mathrm{C}_{i+1}^RM$ for all $i\geq0$.
\item
$\gamma\text{-}\mathrm{NZD}_RM=\hat\gamma\text{-}\mathrm{NZD}_RM$.
\item
If $\underline{z}=z_1,\ldots,z_r$ is a $M$-$\gamma$-regular sequence, then 
$\underline{z}$ is also a $\mathrm{C}_{i}^RM$-$\gamma$-regular sequence for all $i\geq0$. 
Especially, $\gamma\text{-}\mathrm{depth}_RM\leq\gamma\text{-}\mathrm{depth}_R\mathrm{C}^RM$.
\item
If $\gamma\text{-}\mathrm{NZD}_R\mathrm{C}_{i}^RM\neq \emptyset$, 
then $\gamma\text{-}\mathrm{NZD}_R\mathrm{C}_{i}^RM=\mathrm{NZD}_RM/\Gamma_{\mathfrak{m}}(M)$.
\item
$\gamma\text{-}\mathrm{NZD}_RM\neq \emptyset$ if and only if 
$\mathrm{Syz}_1^RM= \left(\mathrm{Syz}_1^RM\underset{F_M}:\mathfrak{m}^{\infty}\right)_{\geq d}$.
\item
Let $i\geq0$. The following are equivalent:
\begin{enumerate}[label=(\roman*)]
\item
$\gamma\text{-}\mathrm{NZD}_R\mathrm{C}_{i}^RM\neq \emptyset$.
\item
$i\geq \mathrm{deg}\,\mathrm{soc}M-d+1$.
\item
$\mathfrak{m}^i\mathrm{Syz}_1^RM= \left(\mathrm{Syz}_1^RM\underset{F_M}:\mathfrak{m}^{\infty}\right)_{\geq d+i}$.
\end{enumerate}
\end{enumerate}
\end{lemma}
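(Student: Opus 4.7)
The plan is to exploit the single-degree generation hypothesis $\mathbf{deg}(\mathrm{Syz}_1^R M \otimes k)=\{d\}$ in order to pin down the first syzygy of every $\mathrm{C}_i^RM$ and then bootstrap the eight assertions from Corollary \ref{cor_nzd} and Remark \ref{rem_nzd}. For (1), let $U=\mathrm{Syz}_1^RM$ and observe that the canonical surjection $\mathrm{C}_i^RM = F_M/\mathfrak{m}^iU \twoheadrightarrow M = F_M/U$ has kernel $U/\mathfrak{m}^iU$, concentrated in degrees $[d,d+i-1]$ since $U$ is generated in degree $d$; hence this map is an isomorphism in degrees $\geq d+i$. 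For (2), since $U\subseteq \mathfrak{m}F_M$ the presentation $F_M\twoheadrightarrow \mathrm{C}_i^RM$ remains minimal, so $\mathrm{Syz}_1^R\mathrm{C}_i^RM = \mathfrak{m}^iU$ is generated in the single degree $d+i$; applying Corollary \ref{cor_nzd}(1) to $\mathrm{C}_i^RM$ and using (1) yields $\gamma\text{-}\mathrm{NZD}_R\mathrm{C}_i^RM = \mathrm{NZD}_R(\mathrm{C}_i^RM)_{\geq d+i} = \mathrm{NZD}_R M_{\geq d+i}$. Parts (3) and (4) are then formal: (3) is the inclusion $\mathrm{NZD}_R M_{\geq d+i}\subseteq \mathrm{NZD}_R M_{\geq d+i+1}$; (4) follows because $\hat\gamma\text{-}\mathrm{NZD}_RM$ is the intersection of the ascending chain $\{\gamma\text{-}\mathrm{NZD}_R\mathrm{C}_i^RM\}_{i\geq 0}$ and hence equals its smallest term.

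The key preliminary for (5) is that single-degree generation of the first syzygy is preserved under a $\gamma$-regular reduction: by Lemma \ref{lemA_splitting}(1), $\mathrm{Syz}_1^{\overline R}\overline M$ sits as a direct summand of $\mathrm{Syz}_1^RM\otimes\overline R$, which is still generated in the single degree $d$, so $\mathrm{Syz}_1^{\overline R}\overline M$ is as well (if nonzero). Inducting on the length of the reduction, each $\overline M^{(j)}$ enjoys this property over the smaller polynomial ring $\overline R^{(j)}$. Combining this with the iterated Remark \ref{rem_Ci} identification $\overline{\mathrm{C}_i^RM}^{(j-1)} \simeq \mathrm{C}_i^{\overline R^{(j-1)}}\overline M^{(j-1)}$, I can apply (3) inside $\overline R^{(j-1)}$ to conclude
\[
\overline{z_j}\in\gamma\text{-}\mathrm{NZD}_{\overline R^{(j-1)}}\overline M^{(j-1)}\subseteq \gamma\text{-}\mathrm{NZD}_{\overline R^{(j-1)}}\mathrm{C}_i^{\overline R^{(j-1)}}\overline M^{(j-1)}.
\]
An induction on $j$ then shows $\underline z$ is a $\mathrm{C}_i^RM$-$\gamma$-regular sequence, and the depth inequality is immediate. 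Part (6) follows by combining (2) with Remark \ref{rem_nzd}(3): as soon as $\mathrm{NZD}_R M_{\geq d+i}$ is nonempty, it coincides with $\mathrm{NZD}_R(M/\Gamma_{\mathfrak{m}}(M))$.

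For (7) and (8), set $\widetilde U = U\underset{F_M}:\mathfrak{m}^\infty$. Remark \ref{rem_sat} supplies $\Gamma_{\mathfrak{m}}(M)\simeq \widetilde U/U$, so $\mathrm{deg}\,\mathrm{soc}\, M = \mathrm{deg}(\widetilde U/U)$. Then Corollary \ref{cor_nzd}(2) translates $\gamma\text{-}\mathrm{NZD}_RM\neq\emptyset$ into $\Gamma_{\mathfrak{m}}(M)_{\geq d}=0$, i.e.\ $\widetilde U_{\geq d}\subseteq U$; since $U$ is generated in degree $d$ we have $U=U_{\geq d}\subseteq \widetilde U_{\geq d}$, so this is equivalent to the equality $U=\widetilde U_{\geq d}$, proving (7). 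For (8), the implication (i)$\Leftrightarrow$(ii) comes from (2) and Remark \ref{rem_nzd}(2): $\gamma\text{-}\mathrm{NZD}_R\mathrm{C}_i^RM\neq\emptyset$ iff $\Gamma_{\mathfrak{m}}(M)_{\geq d+i}=0$ iff $\mathrm{deg}\,\mathrm{soc}\,M\leq d+i-1$. For (i)$\Leftrightarrow$(iii), note that $\mathfrak{m}^iU\underset{F_M}:\mathfrak{m}^\infty=\widetilde U$, and because $U$ is generated in degree $d$ one has $U_{\geq d+i}=\mathfrak{m}^iU$; then repeating the argument for (7) with $\mathrm{C}_i^RM$ in place of $M$ identifies $\gamma\text{-}\mathrm{NZD}_R\mathrm{C}_i^RM\neq\emptyset$ with $\mathfrak{m}^iU=\widetilde U_{\geq d+i}$. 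The main obstacle is (5), which requires the careful inductive verification that single-degree generation propagates through a $\gamma$-regular reduction, as all the other parts reduce more or less directly to (1), (2), and tools already established in Sections \ref{sec_r-reg}--\ref{sec_r-reg-cr}.
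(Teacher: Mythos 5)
Your proof is correct and follows essentially the same route as the paper for all eight parts: use the single-degree hypothesis to identify $\mathrm{Syz}_1^R\mathrm{C}_i^RM=\mathfrak{m}^iU$ as generated in degree $d+i$, invoke Corollary~\ref{cor_nzd}(1) to get (2), then derive the rest from Remark~\ref{rem_nzd} and Corollary~\ref{cor_nzd}(2). The one place you add something is part~(5): the paper simply cites (4) together with the identification $\mathrm{C}_i^RM\otimes\overline R^{(j)}\simeq\mathrm{C}_i^{\overline R^{(j)}}\overline M^{(j)}$ (Remark~\ref{rem_Ci}) and leaves implicit that the single-degree condition is inherited by each $\overline M^{(j)}$; you make that inheritance explicit via Lemma~\ref{lemA_splitting}(1), observing that $\mathrm{Syz}_1^{\overline R}\overline M$ is a direct summand of $\mathrm{Syz}_1^RM\otimes\overline R$, hence its degree-$0$ Tor is still concentrated in degree $d$. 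This is a genuine, if small, clarification rather than a different argument — the tools, reductions, and logical skeleton are otherwise identical to the paper's.
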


\begin{proof} 
(1) It is enough to show that 
$M_{\geq d+1}\simeq\left(\mathrm{C}^RM\right)_{\geq d+1}$.\\
This is obtained from the following exact sequence:
$$
0\to \left(\mathrm{Syz}_1^RM/\mathfrak{m}\mathrm{Syz}_1^RM\right)_{\geq d+1}
\to \left(\mathrm{C}^RM\right)_{\geq d+1}
\to M_{\geq d+1} \to 0,
$$
where $\left(\mathrm{Syz}_1^RM/\mathfrak{m}\mathrm{Syz}_1^RM\right)_{\geq d+1}=0$ 
since 
$\mathbf{deg}\left(\mathrm{Syz}_1^RM\otimes k\right) = \{d\}$.\\
(2) follows from (1) and Corollary \ref{cor_nzd} (1), 
since $\mathbf{deg}\left(\mathrm{Syz}_1^R\mathrm{C}_i^RM\otimes k\right) = \{d+i\}$. \\
(3) follows from (2). (4) follows from (3).\\
(5) follows from (4) since $\mathrm{C}_i^R M\otimes\overline{R}^{(j)}
\simeq
\mathrm{C}_i^{\overline{R}^{(j)}}\, \overline{M}^{(j)}$ by Remark \ref{rem_Ci}, 
where we denote
$ \overline{R}^{(j)}=R/(z_1,\ldots,z_j)R$ and $ \overline{M}^{(j)}=M/(z_1,\ldots,z_j)M$ for $j=1,\cdots,r$.\\
(6) follows from (2) and Remark \ref{rem_nzd} (3). \\
(7) Since $\left(\mathrm{Syz}_1^RM\right)_{\geq d}=\mathrm{Syz}_1^RM$, 
there is the following exact sequence:
$$
0\to\mathrm{Syz}_1^RM \hookrightarrow \left(\mathrm{Syz}_1^RM\underset{F_M}:\mathfrak{m}^{\infty}\right)_{\geq d} 
\to \Gamma_{\mathfrak{m}}(M)_{\geq d} \to 0.
$$
Using the exact sequence above, 
$\mathrm{Syz}_1^RM= \left(\mathrm{Syz}_1^RM\underset{F_M}:\mathfrak{m}^{\infty}\right)_{\geq d}$ 
if and only if 
$\Gamma_{\mathfrak{m}}(M)_{\geq d}=0$. This is equivalent to 
$\gamma\text{-}\mathrm{NZD}_RM\neq \emptyset$
from Corollary \ref{cor_nzd} (2).\\
(8) The equivalence between (i) and (ii) follows from Remark \ref{rem_nzd} (2),
since 
$$
\gamma\text{-}\mathrm{NZD}_R\mathrm{C}_i^RM=\mathrm{NZD}_RM_{\geq d+i}\text{ by (2)}.
$$
The equivalence between (i) and (iii) follows from (7),  since 
$$
\mathrm{Syz}_1^R\mathrm{C}_i^RM\underset{F_M}:\mathfrak{m}^{\infty}=
\mathfrak{m}^i\mathrm{Syz}_1^RM\underset{F_M}:\mathfrak{m}^{\infty}=
\mathrm{Syz}_1^RM\underset{F_M}:\mathfrak{m}^{\infty}.
$$
\end{proof}

\begin{notation}\label{n-8} 
$\delta_R(M)=\min\{i\geq0\mid \gamma\text{-}\mathrm{depth}_R\mathrm{C}_i^RM=n\}$.
\end{notation}

\begin{lemma}\label{lem_delta-sg} 
The following hold:
\begin{enumerate}[label=(\arabic*)]
\item
If $n=1$,  then $\delta_R(M)= 0$.
\item
If $\mathrm{pd}_RM= 0$,  then $\delta_R(M)=0$.
\item
If $\mathbf{deg}\left(\mathrm{Syz}_1^RM\otimes k\right) = \{d\}$,  then $\delta_R(M)<\infty$.
\end{enumerate}
\end{lemma}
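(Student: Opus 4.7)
The plan is to dispatch (1) and (2) by direct inspection and prove (3) by induction on $n$, propagating the single-generation-degree hypothesis through a sufficiently high level of the tower $\{\mathrm{C}_i^R M\}$ by means of the Splitting Lemma~\ref{lemA_splitting}. For (1), when $n=1$ the only quotient $R/zR$ of $R$ by a nonzero linear form is $R/x_1R = k$ itself, so the canonical map $R/zR \to k$ is the identity and the map $\rho$ of Definition~\ref{def_r-reg} is an isomorphism; hence $x_1 \in \gamma\text{-}\mathrm{NZD}_R M$ for every $M$, giving $\gamma\text{-}\mathrm{depth}_R M = 1 = n$ and thus $\delta_R(M)=0$. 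For (2), Remark~\ref{rem_r-seq}(3) yields $\gamma\text{-}\mathrm{depth}_R M = n$ at once, so $\delta_R(M)=0$.

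For (3), induct on $n$; the base case $n=1$ is (1). For $n \geq 2$, set $d = \mathrm{deg}(\mathrm{Syz}_1^R M \otimes k)$. By Lemma~\ref{lem_nzd-sg}(8), the set $\gamma\text{-}\mathrm{NZD}_R \mathrm{C}_{i_0}^R M$ is nonempty for $i_0 = \max\{0, \mathrm{deg}\,\mathrm{soc}\,M - d + 1\}$ (finite because $\mathrm{soc}\,M$ is finite-dimensional); pick $z_1$ in this set and put $\overline{R} = R/z_1R$, a polynomial ring in $n-1$ variables over $K$. Since $\mathrm{Syz}_1^R \mathrm{C}_{i_0}^R M = \mathfrak{m}^{i_0}\mathrm{Syz}_1^R M$ is minimally generated in the single degree $d+i_0$, Lemma~\ref{lemA_splitting}(1) gives
\[
\mathrm{Syz}_1^R \mathrm{C}_{i_0}^R M \otimes \overline{R} \simeq \mathrm{Syz}_1^{\overline{R}}(\mathrm{C}_{i_0}^{\overline{R}}\overline{M}) \oplus \mathrm{soc}(\mathrm{C}_{i_0}^R M)(-1),
\]
whose left side is generated in degree $d+i_0$. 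Consequently either $\mathrm{C}_{i_0}^{\overline{R}}\overline{M}$ is $\overline{R}$-free (in which case (2) applied over $\overline{R}$ gives $\gamma\text{-}\mathrm{depth}_{\overline{R}} \mathrm{C}_{i_0}^{\overline{R}}\overline{M} = n-1$), or $\mathbf{deg}(\mathrm{Syz}_1^{\overline{R}}(\mathrm{C}_{i_0}^{\overline{R}}\overline{M}) \otimes k) = \{d+i_0\}$ and the induction hypothesis yields some $j_0 < \infty$ with $\gamma\text{-}\mathrm{depth}_{\overline{R}} \mathrm{C}_{i_0+j_0}^{\overline{R}}\overline{M} = n-1$ (using $\mathrm{C}_{j_0}^{\overline{R}}\mathrm{C}_{i_0}^{\overline{R}}\overline{M} = \mathrm{C}_{i_0+j_0}^{\overline{R}}\overline{M}$).

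By the monotonicity in Lemma~\ref{lem_nzd-sg}(3), $z_1 \in \gamma\text{-}\mathrm{NZD}_R \mathrm{C}_{i_0+j_0}^R M$. Lifting the witnessing length-$(n-1)$ $\overline{R}$-$\gamma$-regular sequence on $\mathrm{C}_{i_0+j_0}^{\overline{R}}\overline{M} = \mathrm{C}_{i_0+j_0}^R M \otimes \overline{R}$ (Remark~\ref{rem_Ci}) to linear forms of $R$ yields an $n$-term $\mathrm{C}_{i_0+j_0}^R M$-$\gamma$-regular sequence beginning with $z_1$, so $\delta_R(M) \leq i_0+j_0 < \infty$. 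The principal obstacle is the fact that modding out by an almost-regular linear form need not preserve the single-generation-degree property of the first syzygy over the smaller ring; Lemma~\ref{lemA_splitting} does secure this preservation, but only after one has first ascended the $\mathrm{C}_i^R M$-tower to a level at which a $\gamma$-regular element actually exists, which is why the construction of $z_1$ must take place on $\mathrm{C}_{i_0}^R M$ rather than on $M$ itself.
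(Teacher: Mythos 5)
Your argument is correct, and for parts (2) and (3) it follows essentially the same route as the paper's proof: induction on $n$, ascending the tower $\{\mathrm{C}_i^R M\}$ via Lemma~\ref{lem_nzd-sg}(8) to a level where a $\gamma$-regular element $z_1$ exists, reducing modulo $z_1$, applying the induction hypothesis over $\overline{R}$, and lifting the resulting length-$(n-1)$ sequence. The only cosmetic difference in (3) is the order of operations: you ascend first and then pick $z_1 \in \gamma\text{-}\mathrm{NZD}_R \mathrm{C}_{i_0}^R M$, while the paper picks $z_1 \in \mathrm{NZD}_R\left(M/\Gamma_{\mathfrak{m}}(M)\right)$ immediately and only later invokes Lemma~\ref{lem_nzd-sg}(6),(8) to see $z_1$ is $\gamma$-regular on a sufficiently high $\mathrm{C}_r^R M$; by Lemma~\ref{lem_nzd-sg}(6) these are the same pool of elements. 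Your version makes the preservation of the single-degree hypothesis more explicit (via the direct-sum decomposition of Lemma~\ref{lemA_splitting}(1)), a point the paper leaves implicit since it reduces before it has a $\gamma$-regular element in hand and must rely on the surjection $\mathrm{Syz}_1^R M \otimes \overline{R} \twoheadrightarrow \mathrm{Syz}_1^{\overline{R}}\overline{M}$ of Observation~\ref{obs_lemA}. Where you genuinely diverge is part (1): you observe that for $n=1$ any nonzero linear $z$ satisfies $zR = \mathfrak{m}$, so $R/zR \to k$ is the identity and $\rho$ is an isomorphism, whence every $z$ is $\gamma$-regular; the paper instead decomposes $M$ into cyclic summands $K[x_1]/(x_1^r)$ or $K[x_1]$ via the PID structure theorem and checks $\gamma$-regularity summand by summand. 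Your argument is shorter and avoids the structure theorem, while the paper's has the pedagogical advantage of exhibiting the cyclic pieces explicitly (which it reuses in Section~\ref{sec_fdim}). Both are valid.
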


\begin{proof}
(1) If $n=1$, i.e., $ R=K\lbrack x_1 \rbrack$, then $ x_1\in R_1$ is a $ \gamma\text{-regular}$ element 
for any nonzero finitely generated $R$ module $M$, 
since $M$ is a direct sum of principal modules of the form $ K\lbrack x_1 \rbrack/(x_1^r)$ for some positive integer $r$
  or $ K\lbrack x_1 \rbrack$, up to shifts.\\
(2) If $\mathrm{pd}_RM= 0$, then $\mathrm{depth}_RM=\gamma\text{-}\mathrm{depth}_RM=n$. 
Hence, $\delta_R(M)=0$.\\
(3) We prove this by induction on $n$, the number of variables of base ring $R$. If $n=1$, the assertion holds by (1).
Let $n>1$. We choose an element $z_1\in\mathrm{NZD}_RM/\Gamma_{\mathfrak{m}}(M)$ 
and denote  $\overline{R}=R/z_1R$ and $\overline{M}=M/z_1M$. 
Then, by the induction hypothesis, we have $\delta_{\overline{R}}(\overline{M})<\infty$. 
Hence, there exists a sequence $z_2, \cdots, z_n \in R_1\setminus\{0\}$ 
such that the images $\overline{z_2}, \cdots, \overline{z_n}$ in $\overline{R}$ 
form a $\mathrm{C}_{i}^{\overline{R}}\,\overline{M}$-$\gamma$-regular sequence 
for all $i\geq\delta_{\overline{R}}(\overline{M})$.
Moreover, we note that 
$$
\mathrm{C}_i^R M\otimes\overline R \simeq\mathrm{C}_{i}^{\overline{R}}\,\overline{M}
$$
by Remark \ref{rem_Ci}.
Here, we set 
$$
r=\max\{\delta_{\overline{R}}(\overline{M}), \mathrm{deg}\,\mathrm{soc}M-d+1\}.
$$
Using Lemma \ref{lem_nzd-sg} (6) and (8), we obtain  $\gamma\text{-}\mathrm{depth}_R\mathrm{C}_r^RM=n$,
 since the sequence $z_1,\cdots, z_n$ is a $\mathrm{C}_{r}^RM$-$\gamma$-regular sequence. 
 Thus, we conclude that $\delta_R(M)\leq r<\infty$.
\end{proof}

\begin{lemma}\label{lem_soc-sg} 
Assume that  $\mathbf{deg}\left(\mathrm{Syz}_1^RM\otimes k\right)=\{d\}$  for some $d\in\mathbb{Z}$.\\ 
If $ \mathrm{reg}_R\,\mathrm{Syz}_1^RM=d$ ,  then the following hold:
\begin{enumerate}[label=(\arabic*)]
\item
$\mathbf{deg}\,\mathrm{soc}M\subseteq\{d-1\}$.
\item
$ \gamma\text{-}\mathrm{NZD}_RM\neq\emptyset$.
\item
One of the following condition holds for $ z\in\gamma\text{-}\mathrm{NZD}_RM$:
\begin{enumerate}[label=(\roman*)]
\item
$ \mathrm{Syz}_1^{\overline{R}}\overline{M}=0$.
\item
$\mathbf{deg}\left(\mathrm{Syz}_1^{\overline{R}}\overline{M}\otimes k\right)=\{d\}$ and 
$\mathrm{reg}_{\overline{R}}\,\mathrm{Syz}_1^{\overline{R}}\overline{M}=d$.
\end{enumerate}
\end{enumerate}
\end{lemma}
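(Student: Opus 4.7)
The plan is to establish (1) via the linearity forced on the minimal free resolution of $\mathrm{Syz}_1^R M$, deduce (2) from (1) via Corollary \ref{cor_nzd}, and obtain (3) from the splitting Lemma \ref{lemA_splitting}.

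For (1), first I would observe that the hypotheses force $\mathrm{Syz}_1^R M$ to have a purely linear minimal free resolution. Indeed, since all minimal generators of $\mathrm{Syz}_1^R M$ lie in degree $d$, $\beta_{i,j}^R(\mathrm{Syz}_1^R M) \neq 0$ implies $j \geq d + i$, and the regularity bound $j - i \leq d$ gives the reverse inequality; hence $\beta_{i,j}^R(\mathrm{Syz}_1^R M) \neq 0$ iff $j = d + i$. From the short exact sequence $0 \to \mathrm{Syz}_1^R M \to F_M \to M \to 0$, the shift $\beta_{i+1,j}^R(M) = \beta_{i,j}^R(\mathrm{Syz}_1^R M)$ for $i \geq 1$ then forces $\beta_{n,j}^R(M) \neq 0$ only when $j = d + n - 1$. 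Computing $\mathrm{Tor}_n^R(M, k)$ via the Koszul resolution of $k$, whose top term is $R(-n)$ with final differential given by multiplication by $x_1, \ldots, x_n$, yields the identification $\mathrm{Tor}_n^R(M, k)_{j+n} \cong \mathrm{soc}(M)_j$. Combining these gives $\mathrm{soc}(M)_j \neq 0 \Rightarrow j = d - 1$, which is (1).

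Part (2) then follows immediately from Corollary \ref{cor_nzd} (2): the hypothesis $\mathbf{deg}(\mathrm{Syz}_1^R M \otimes k) = \{d\}$ together with the conclusion $\mathrm{deg}\,\mathrm{soc}M \leq d - 1$ of (1) gives $\gamma\text{-}\mathrm{NZD}_R M \neq \emptyset$.

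For (3), I choose $z \in \gamma\text{-}\mathrm{NZD}_R M$. Lemma \ref{lemA_splitting} (1) yields $\mathrm{Syz}_1^R M \otimes_R \overline{R} \cong \mathrm{Syz}_1^{\overline{R}} \overline{M} \oplus \mathrm{soc}M(-1)$. Further tensoring over $\overline{R}$ with $k$ recovers $\mathrm{Syz}_1^R M \otimes_R k$ on the left, which is concentrated in degree $d$ by hypothesis; hence the minimal generators of $\mathrm{Syz}_1^{\overline{R}}\overline{M}$, when it is nonzero, all lie in degree $d$. On the other hand, Lemma \ref{lemA_splitting} (8) gives $\mathrm{reg}_{\overline{R}}\,\mathrm{Syz}_1^{\overline{R}}\overline{M} \leq \mathrm{reg}_R\,\mathrm{Syz}_1^R M = d$, and the regularity of a nonzero module is at least its generating degree, so equality holds and (ii) follows; otherwise $\mathrm{Syz}_1^{\overline{R}}\overline{M} = 0$ and (i) holds.

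The main obstacle, though a minor one, is the Koszul-duality identification $\mathrm{Tor}_n^R(M,k)_{j+n} \cong \mathrm{soc}(M)_j$ used in the proof of (1); this is handled directly by computing $\mathrm{Tor}_n^R(M,k)$ using the Koszul complex on $x_1, \ldots, x_n$, where the kernel of the final differential $M(-n) \to M(-n+1)^n$ in degree $j+n$ is exactly $\mathrm{soc}(M)_j$.
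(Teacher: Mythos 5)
Your proposal is correct and follows essentially the same route as the paper: part (1) via the Koszul identification $\mathrm{Tor}_n^R(M,k) \simeq \mathrm{soc}M(-n)$ combined with the linearity of the resolution of $\mathrm{Syz}_1^R M$, part (2) via Corollary \ref{cor_nzd}, and part (3) via the splitting in Lemma \ref{lemA_splitting} (1) and the regularity formula in Lemma \ref{lemA_splitting} (8). Your writeup is merely more explicit in spelling out why the resolution of $\mathrm{Syz}_1^R M$ is linear and in unpacking the Koszul-homology identification, but the argument is the paper's.
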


\begin{proof}
(1) We recall that $ R=K\lbrack x_1,\ldots,x_n\rbrack$ is a polynomial ring in $n$ variables. 
The following hold:
$$
\mathrm{soc}M(-n) \simeq
\mathrm{H}_n(x_1,\ldots,x_n;M)\simeq
\mathrm{Tor}^R_{n}(M, k)\simeq
 \mathrm{Tor}^R_{n-1}(\mathrm{Syz}_1^RM, k),
 $$
where $\mathrm{H}_{n}(x_1,\ldots,x_{n};M)$ denotes the $n\text{-th}$ Koszul homology of $M$ 
with respect to  $x_1,\ldots,x_{n}$.
So, we obtain
 $$ 
 \mathbf{deg}\left(\mathrm{soc}M(-n)\right)=
 \mathbf{deg}\left(\mathrm{Tor}^R_{n-1}(\mathrm{Syz}_1^RM, k)\right)\subseteq\{d+n-1\},
 $$
since $\mathrm{Syz}_1^RM$ has a linear resolution. 
Therefore, $\mathbf{deg}\,\mathrm{soc}M\subseteq\{d-1\}$.\\
(2) This follows from (1) and Corollary $\ref{cor_nzd}$ (2).\\
(3): We recall that 
$\mathrm{Syz}_1^RM\otimes\overline{R}\simeq
\mathrm{Syz}_1^{\overline{R}}\overline{M}\oplus\mathrm{soc}M(-1)$ 
form Lemma $\ref{lemA_splitting}$ (1).\\
Hence, if $ \mathrm{Syz}_1^{\overline{R}}\overline{M}\neq 0$, then 
$ \mathbf{deg}\left(\mathrm{Syz}_1^{\overline{R}}\overline{M}\otimes k\right)=\{d\}$ 
and $ \mathrm{reg}_{\overline{R}}\,\mathrm{Syz}_1^{\overline{R}}\overline{M}\geq d$.\\
Moreover, from Lemma $\ref{lemA_splitting}$ (8),  we have
 $$
 d=\mathrm{reg}_R\left(\mathrm{Syz}_1^RM\right)=
 \mathrm{max}\left\{
 \mathrm{reg}_{\overline{R}}\left(\mathrm{Syz}_1^{\overline{R}}\overline{M}\right),\;
 \mathrm{sup}\,(\mathrm{Syz}_1^RM\otimes k)
 \right\}.
 $$
Therefore, we obtain $\mathrm{reg}_{\overline{R}}\,\mathrm{Syz}_1^{\overline{R}}\overline{M}= d$. 
\end{proof}

\begin{proposition}\label{prop_sdg}
Let  $\mathbf{deg}\left(\mathrm{Syz}_1^RM\otimes k\right)=\{d\}$. 
Then the following are equivalent:
\begin{enumerate}[label=(\arabic*)]
\item
$M$ has a componetwise linear first syzygy.
\item
$\gamma$-$\mathrm{depth}M=n$.
\end{enumerate}
\end{proposition}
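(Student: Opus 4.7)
The plan is to treat the two implications separately: $(2) \Rightarrow (1)$ drops out of Lemma \ref{lem_r-seq}(2) applied to a maximal $\gamma$-regular sequence, and $(1) \Rightarrow (2)$ is proved by induction on $n$ using Lemma \ref{lem_soc-sg} to descend along one $\gamma$-regular element at a time. Throughout, since $\mathrm{Syz}_1^R M$ is generated in the single degree $d$, its componentwise linearity is equivalent to the existence of a $d$-linear resolution, i.e., to $\mathrm{reg}_R \mathrm{Syz}_1^R M = d$.

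For $(2) \Rightarrow (1)$, let $z_1, \ldots, z_n$ be an $M$-$\gamma$-regular sequence. Then $\overline{R}^{(n)} = R/(z_1, \ldots, z_n)R = K$, so $\overline{M}^{(n)}$ is a graded $K$-vector space with vanishing first syzygy. Substituting this into Lemma \ref{lem_r-seq}(2) yields $\mathrm{reg}_R \mathrm{Syz}_1^R M = \mathrm{deg}\,(\mathrm{Syz}_1^R M \otimes k) = d$, so $\mathrm{Syz}_1^R M$ has a $d$-linear resolution and is componentwise linear.

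For $(1) \Rightarrow (2)$, I would induct on $n$. The hypotheses $\mathbf{deg}\,(\mathrm{Syz}_1^R M \otimes k) = \{d\}$ and $\mathrm{reg}_R \mathrm{Syz}_1^R M = d$ put us exactly in the setting of Lemma \ref{lem_soc-sg}. When $n = 1$, Lemma \ref{lem_soc-sg}(2) alone furnishes an element of $\gamma\text{-}\mathrm{NZD}_R M$, giving the required length-$1$ sequence. For the inductive step, pick $z_1 \in \gamma\text{-}\mathrm{NZD}_R M$ and set $\overline{R} = R/z_1R$, a polynomial ring in $n-1$ variables. Lemma \ref{lem_soc-sg}(3) then gives two cases: either $\mathrm{Syz}_1^{\overline{R}} \overline{M} = 0$, in which case $\overline{M}$ is $\overline{R}$-free and Remark \ref{rem_r-seq}(3) produces an $\overline{M}$-$\gamma$-regular sequence of length $n-1$; or $\mathbf{deg}\,(\mathrm{Syz}_1^{\overline{R}} \overline{M} \otimes k) = \{d\}$ with $\mathrm{reg}_{\overline{R}} \mathrm{Syz}_1^{\overline{R}} \overline{M} = d$, so $\mathrm{Syz}_1^{\overline{R}} \overline{M}$ is componentwise linear with single-degree generation and the induction hypothesis applied over $\overline{R}$ yields the desired sequence. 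In either case, prepending $z_1$ gives an $M$-$\gamma$-regular sequence of length $n$, which is maximal since $n+1$ linear forms in $R_1$ would become linearly dependent modulo the first $n$, reducing the residue ring to $K$, where $\gamma\text{-}\mathrm{NZD}$ is empty by Remark \ref{rem_r-reg}(2).

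The main obstacle has already been preemptively handled by Lemma \ref{lem_soc-sg}(3), which is precisely the statement that componentwise linearity of the first syzygy with single-degree generation is inherited (or trivially collapses to freeness) upon reduction by a $\gamma$-regular element. Once this is granted, the induction is essentially bookkeeping; the only small verification needed is that a $\overline{M}$-$\gamma$-regular sequence over $\overline{R}$, when prepended with $z_1$, yields an $M$-$\gamma$-regular sequence over $R$, and this is immediate from the definition via the identifications $R/(z_1, \ldots, z_i)R = \overline{R}/(\overline{z_2}, \ldots, \overline{z_i})\overline{R}$ and $M/(z_1, \ldots, z_i)M = \overline{M}/(\overline{z_2}, \ldots, \overline{z_i})\overline{M}$.
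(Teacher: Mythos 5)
Your $(1)\Rightarrow(2)$ tracks the paper closely: Lemma \ref{lem_soc-sg} to get a $\gamma$-regular element, then induction on $n$ with the two cases $\mathrm{Syz}_1^{\overline{R}}\overline{M}=0$ or not. The small mismatch in the base case (Lemma \ref{lem_soc-sg}(2) instead of Lemma \ref{lem_delta-sg}(1)) is harmless. So that half is fine.

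The gap is in $(2)\Rightarrow(1)$. You declare at the outset that, because $\mathrm{Syz}_1^RM$ is generated in the single degree $d$, componentwise linearity is \emph{equivalent} to $\mathrm{reg}_R\,\mathrm{Syz}_1^RM = d$, and then you only prove $\mathrm{reg}_R\,\mathrm{Syz}_1^RM = d$ and stop. The direction you need — that a linear resolution of $\mathrm{Syz}_1^RM$ forces $\mathrm{reg}_R\left(\mathrm{Syz}_1^RM\right)_{\langle d+i\rangle}=\mathrm{reg}_R\,\mathfrak{m}^i\mathrm{Syz}_1^RM \le d+i$ for every $i\ge 1$ — is a genuine statement requiring proof (a Tor long-exact-sequence argument applied iteratively to $0\to\mathfrak{m}N\to N\to N/\mathfrak{m}N\to0$), and it is nowhere established in the paper. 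The paper deliberately avoids it: its $(2)\Rightarrow(1)$ instead invokes Lemma \ref{lem_nzd-sg}(5) to show that an $M$-$\gamma$-regular sequence of length $n$ is automatically a $\mathrm{C}_i^RM$-$\gamma$-regular sequence for every $i$, and then applies Corollary \ref{cor_r-seq-pred}(1) to each $\mathrm{C}_i^RM$ to get $\mathrm{reg}_R\,\mathfrak{m}^i\mathrm{Syz}_1^RM = d+i$ directly. So either supply a proof of the ``linear resolution in degree $d$ $\Rightarrow$ componentwise linear'' implication, or replace that step by the paper's route through Lemma \ref{lem_nzd-sg}(5); as written, your argument only establishes the $\langle d\rangle$-component of the claim.
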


\begin{proof}
(2) $ \Rightarrow$ (1): It is enough to show that 
$$
\mathrm{reg}_R\,\left(\mathrm{Syz}_1^RM\right)_{\langle d+i \rangle}=
\mathrm{reg}_R\,\mathfrak{m}^i\mathrm{Syz}_1^RM=d+i
$$
for all $i\geq 0$. However, since 
$n=\gamma\text{-}\mathrm{depth}_RM=
\gamma\text{-}\mathrm{depth}_R\mathrm{C}_i^RM$
 for all $i\geq 0$ from Lemma \ref{lem_nzd-sg} (5), we only need to show the case when $i=0$. 
Choosing an $ M\text{-}\gamma\text{-}$ sequence $ z_1,\ldots,z_n$, 
we then have $\mathrm{pd}_{\overline{R}^{(n)}} \overline{M}^{(n)}=0$, where
we denote $\overline{R}^{(n)}=R/(z_1,\ldots,z_n)R=k$ and $\overline{M}^{(n)}=M\otimes \overline{R}^{(n)}$.
Using Corollary \ref{cor_r-seq-pred} (1), we obtain 
$\mathrm{reg}_R\,\mathrm{Syz}_1^RM=\mathrm{deg}\,(\mathrm{Syz}_1^RM\otimes k)=d$.\\
(1) $ \Rightarrow$ (2): We prove this by induction on $n$ the number of variables. 
If $n=1$, then $\gamma\text{-}\mathrm{depth}_R M=1$ from Lemma \ref{lem_delta-sg} (1).
If $n>1$,  then ,using Lemma \ref{lem_soc-sg} (2), 
$\gamma\text{-}\mathrm{NZD}_RM\neq\emptyset$ and we can take an element $ z\in\gamma\text{-}\mathrm{NZD}_RM$.
Therefore, it is enough to show that 
$\gamma\text{-}\mathrm{depth}_{\overline{R}}\overline{ M}=n-1$.
\begin{enumerate}[label=(\roman*)]
\item
If $ \mathrm{Syz}_1^{\overline{R}}\overline{M}=0$,   
then we note that $ \overline{M}$ is a free $ \overline{R}\text{-moudule}$.  
Hence 
$\gamma\text{-}\mathrm{depth}_{\overline{R}}\overline{ M}=\mathrm{depth}_{\overline{R}}\overline{ M}=n-1$.
\item
If $ \mathrm{Syz}_1^{\overline{R}}\overline{M}\neq0$, 
then $ \mathbf{deg}\left(\mathrm{Syz}_1^{\overline{R}}\overline{M}\otimes k\right)=\{d\}$ 
and  $ \mathrm{reg}_{\overline{R}}\,\mathrm{Syz}_1^{\overline{R}}\overline{M}=d$
by Lemma  $\ref{lem_soc-sg}$ (3).
Hence, applying the induction hypothesis, 
we obtain $ \gamma\text{-}\mathrm{depth}_{\overline{R}}\overline{ M}=\text{Kull-dim}\overline{R}=\nu-1$.
\end{enumerate}
This completes the proof. 
\end{proof}

From the argument in the proof of Proposition $\ref{prop_sdg}$ above, 
we derive the following remark.

\begin{remark}\label{rem_seq-red}
Assume that $ \mathbf{deg}\left(\mathrm{Syz}_1^RM\otimes k\right)=\{d\}$.
If $ \gamma\text{-}\mathrm{depth}_R M=n$, 
then $\gamma\text{-}\mathrm{depth}_{\overline{R}} \overline{M}=n-1$ 
for any $ z\in\gamma\text{-}\mathrm{NZD}_RM$, 
where $ \overline{R}=R/zR$ and $ \overline{M}=M\otimes\overline{R}$, 
since $ \mathrm{Syz}_1^{\overline{R}}\overline{M}=0$ or 
$ \mathrm{reg}_{\overline{R}}\,\mathrm{Syz}_1^{\overline{R}}\overline{M}=d$.
\end{remark}

%
%
\section{$\hat\gamma\text{-sequence}$ revisited}
\label{sec_rhat_revisited}

We provide the proof of our main theorem, which we restate as Theorem \ref{thm_main} in this section.

\begin{notation}\label{n-9} 
Let $ F_M\to M$ be  a minimal free cover of $ M$.
$$
\mathrm{C}^R_{\langle j \rangle} M=F_M/\left(\mathrm{Syz}_1^RM\right)_{\langle j \rangle}\text{ for }  j\in\mathbb{Z}.
$$
\end{notation}

\begin{remark}\label{rem_C<j>} 
$\mathrm{C}_{\langle j \rangle}^R M\otimes\overline R \simeq F_M/\left(\left(\mathrm{Syz}_1^RM\right)_{\langle j \rangle}+zF_M\right)\simeq\mathrm{C}_{\langle j \rangle}^{\overline{R}} \overline{M}$ for $ j\in\mathbb{Z}$, 
where $ z\in R_1\setminus\{0\}$, $\overline{R}=R/zR$ and $\overline{M}=M\otimes \overline{R}$. 
\end{remark}

\begin{lemma}\label{lem_mfull-C}
Let $F_M\to M$ be  a minimal free cover of $ M$ 
and denote $U=\mathrm{Syz}_1^RM$.
Then the following hold:
\begin{enumerate}[label=(\arabic*)]
\item
For $i\geq0$, $z\in\gamma\text{-}\mathrm{NZD}_R\,\mathrm{C}^R_{i} M$ 
if and only if the following multiplication map by $z$  is injective:
$$
\varphi^{(z,i)}:F_M/\mathfrak{m}^iU \overset{\times z}\to \left(F_M/\mathfrak{m}^{i+1}U\right)(1).
$$
\item
For $j\in\mathbb{Z}$, $z\in\gamma\text{-}\mathrm{NZD}_R\,\mathrm{C}^R_{\langle j \rangle} M$ 
if and only if the following multiplication map by $z$  is injective:
$$
\psi^{(z,j)}:F_M/U_{\langle j \rangle} \overset{\times z}\to \left(F_M/\mathfrak{m}U_{\langle j \rangle}\right)(1).
$$
\end{enumerate}
\end{lemma}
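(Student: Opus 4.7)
The plan is to present both parts as two applications of the same chain of equivalences, namely: $\gamma$-regularity for $\mathrm{C}^R_{(\cdot)}M$ iff the relevant submodule of $F_M$ is $\mathfrak{m}$-full with respect to $z$, iff the stated multiplication map is injective. The two input results are Lemma \ref{lem_r-reg-cr} (2) (specifically the equivalence (i) $\Leftrightarrow$ (iv)) and Lemma \ref{lem_m-full} (specifically the equivalence (1) $\Leftrightarrow$ (2)).

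The preliminary step, which is the only thing that needs to be checked carefully, is to identify the first syzygy of the two modules under consideration. Since $\pi\colon F_M \to M$ is a minimal graded free cover, graded Nakayama gives $U=\mathrm{Syz}_1^R M\subseteq \mathfrak{m}F_M$, and hence both $\mathfrak{m}^i U$ and $U_{\langle j \rangle}$ sit inside $\mathfrak{m}F_M$. Consequently, the natural surjections
\[
F_M \twoheadrightarrow F_M/\mathfrak{m}^i U = \mathrm{C}^R_i M, \qquad F_M \twoheadrightarrow F_M/U_{\langle j \rangle} = \mathrm{C}^R_{\langle j \rangle} M
\]
are again minimal graded free covers, so by Remark \ref{rem_syz} we may identify $\mathrm{Syz}_1^R \mathrm{C}^R_i M \simeq \mathfrak{m}^i U$ and $\mathrm{Syz}_1^R \mathrm{C}^R_{\langle j \rangle} M \simeq U_{\langle j \rangle}$.

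With these identifications in hand, I would apply Lemma \ref{lem_r-reg-cr} (2) with $M$ replaced by $\mathrm{C}^R_i M$ (respectively $\mathrm{C}^R_{\langle j \rangle} M$) to convert the condition $z \in \gamma\text{-}\mathrm{NZD}_R \mathrm{C}^R_i M$ (respectively $z \in \gamma\text{-}\mathrm{NZD}_R \mathrm{C}^R_{\langle j \rangle} M$) into the assertion that $\mathfrak{m}^i U$ (respectively $U_{\langle j \rangle}$) is $\mathfrak{m}$-full in $F_M$ with respect to $z$. Because $F_M$ is free and $z \in R_1 \setminus \{0\}$, we have $z \in \mathrm{NZD}_R F_M$, which is the hypothesis needed to invoke Lemma \ref{lem_m-full}. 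Its (1) $\Leftrightarrow$ (2), applied with $V=F_M$ and the submodule being $\mathfrak{m}^i U$ or $U_{\langle j \rangle}$, converts $\mathfrak{m}$-fullness into injectivity of multiplication by $z$ on $F_M/\mathfrak{m}^i U$ into $(F_M/\mathfrak{m}\cdot\mathfrak{m}^i U)(1) = (F_M/\mathfrak{m}^{i+1}U)(1)$, which is $\varphi^{(z,i)}$; and on $F_M/U_{\langle j \rangle}$ into $(F_M/\mathfrak{m} U_{\langle j \rangle})(1)$, which is $\psi^{(z,j)}$. This closes both equivalences.

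I do not expect any real obstacle: both parts are a routine combination of the two earlier characterizations. The one point that deserves explicit mention is the identification of the first syzygies, which relies on the containment $U \subseteq \mathfrak{m}F_M$ forced by minimality of $F_M \to M$; everything else is a direct substitution into Lemmas \ref{lem_m-full} and \ref{lem_r-reg-cr}.
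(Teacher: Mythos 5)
Your proof is correct and follows the same route as the paper, which simply cites the two equivalences you invoke: Lemma \ref{lem_m-full} (1)$\Leftrightarrow$(2) and Lemma \ref{lem_r-reg-cr} (2) (i)$\Leftrightarrow$(iv). You make explicit the syzygy-identification step ($\mathrm{Syz}_1^R\mathrm{C}^R_iM\simeq\mathfrak{m}^iU$ and $\mathrm{Syz}_1^R\mathrm{C}^R_{\langle j\rangle}M\simeq U_{\langle j\rangle}$, via $U\subseteq\mathfrak{m}F_M$ from minimality) that the paper leaves implicit, which is a useful clarification but not a different argument.
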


\begin{proof} Both (1) and (2) follow from Lemma \ref{lem_m-full} (2) and Lemma \ref{lem_r-reg-cr}  (2).
\end{proof}

\begin{lemma}\label{lem_rhat-cw} 
Let $\mathbf{deg}(\mathrm{Syz}_1^R M \otimes k) = \{d_1 < \cdots < d_r\}$ for some $d_1, \cdots, d_r \in \mathbb{Z}$.
The following equations hold:
\begin{enumerate}[label=(\arabic*)]
\item
$\hat{\gamma}\text{-}\mathrm{NZD}_R(M) =
\displaystyle{ \bigcap_{j \in \mathbb{Z}} }\gamma\text{-}\mathrm{NZD}_R\,\mathrm{C}^R_{\langle j \rangle} M$.
\item
$\hat{\gamma}\text{-}\mathrm{NZD}_R(M) =
\displaystyle{ \bigcap_{i=1}^r }\gamma\text{-}\mathrm{NZD}_R\,\mathrm{C}^R_{\langle d_i \rangle} M$.
\end{enumerate}
\end{lemma}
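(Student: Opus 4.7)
The plan is to translate both sides of the asserted equalities into $\mathfrak{m}$-fullness conditions on submodules of $F_M$, and then to deduce the equivalences from a single degree-wise identity. By Remark \ref{rem_rhat-elem}(2), $z \in \hat\gamma\text{-}\mathrm{NZD}_R M$ if and only if $U := \mathrm{Syz}_1^R M$ is strongly $\mathfrak{m}$-full in $F_M$ with respect to $z$, i.e.\ $\mathfrak{m}^{i+1} U \underset{F_M}{:} z = \mathfrak{m}^i U$ for all $i \ge 0$; by Lemma \ref{lem_mfull-C}(2), $z \in \gamma\text{-}\mathrm{NZD}_R\,\mathrm{C}^R_{\langle j \rangle} M$ if and only if $\mathfrak{m} U_{\langle j \rangle} \underset{F_M}{:} z = U_{\langle j \rangle}$. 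Since $\bigcap_{j \in \mathbb{Z}} \gamma\text{-}\mathrm{NZD}_R\,\mathrm{C}^R_{\langle j \rangle} M \subseteq \bigcap_{i=1}^{r} \gamma\text{-}\mathrm{NZD}_R\,\mathrm{C}^R_{\langle d_i \rangle} M$ is automatic, it suffices to prove (a) $\hat\gamma\text{-}\mathrm{NZD}_R M \subseteq \gamma\text{-}\mathrm{NZD}_R\,\mathrm{C}^R_{\langle j \rangle} M$ for every $j \in \mathbb{Z}$, and (b) $\bigcap_{i=1}^{r} \gamma\text{-}\mathrm{NZD}_R\,\mathrm{C}^R_{\langle d_i \rangle} M \subseteq \hat\gamma\text{-}\mathrm{NZD}_R M$.

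Fixing a set of minimal homogeneous generators $\{g_l\}$ of $U$ with $\deg g_l = d_{k(l)} \in \{d_1,\ldots,d_r\}$, the key technical input I would establish first is the degree-wise identity
\[
(\mathfrak{m}^i U)_n \;=\; (U_{\langle n-i \rangle})_n \;=\; \sum_{l:\, d_{k(l)} \le n-i} \mathfrak{m}^{n-d_{k(l)}} g_l \qquad (n \in \mathbb{Z},\; i \ge 0),
\]
obtained by computing the degree-$n$ part of $\mathfrak{m}^i U$ through the generators $\{\mathfrak{m}^i g_l\}$ and the degree-$n$ part of $U_{\langle j \rangle} = R U_j$ through the generators $\{g_l\}$. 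Shifting to degree $n+1$ gives $(\mathfrak{m}^{i+1}U)_{n+1} = (\mathfrak{m}\,U_{\langle n-i \rangle})_{n+1}$, and when $d_k$ denotes the largest generator degree with $d_k \le n-i$, the same calculation yields the stronger equality $(\mathfrak{m}^{i+1}U)_{n+1} = (\mathfrak{m}\,U_{\langle d_k \rangle})_{n+1}$, since no generator degree lies strictly between $d_k$ and $n-i$.

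For (a), given a homogeneous $f \in F_M$ of degree $n$ with $zf \in \mathfrak{m} U_{\langle j \rangle}$: if $n < j$, then $(\mathfrak{m} U_{\langle j \rangle})_{n+1} = 0$, forcing $zf = 0$ and hence $f = 0$ by the $F_M$-regularity of $z$; if $n \ge j$, the identity places $zf$ inside $\mathfrak{m}^{(n-j)+1} U$, strong $\mathfrak{m}$-fullness of $U$ gives $f \in \mathfrak{m}^{n-j} U$, and the identity again identifies the degree-$n$ part of $\mathfrak{m}^{n-j}U$ with $(U_{\langle j \rangle})_n$, so $f \in U_{\langle j \rangle}$. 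For (b), given $f$ of degree $n$ with $zf \in \mathfrak{m}^{i+1} U$, let $d_k$ be the largest generator degree $\le n-i$ (if no such $d_k$ exists then $(\mathfrak{m}^{i+1}U)_{n+1}=0$ and $f=0$); the identity rewrites the containment as $zf \in \mathfrak{m}\,U_{\langle d_k \rangle}$, the hypothesis gives $f \in U_{\langle d_k \rangle}$, and every generator degree $d_{k(l)}$ contributing to $(U_{\langle d_k \rangle})_n$ satisfies $d_{k(l)} \le d_k \le n-i$, whence $\mathfrak{m}^{n-d_{k(l)}} g_l \subseteq \mathfrak{m}^i U$, so $f \in \mathfrak{m}^i U$.

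The main obstacle is verifying the degree-wise identity $(\mathfrak{m}^i U)_n = (U_{\langle n-i \rangle})_n$; once it is in hand, the inclusions (a) and (b) amount to one-line transports of the colon operation across the identity. The reason (b) needs only the hypotheses at the $r$ generator degrees rather than at all integers is precisely the choice of $d_k$ as the \emph{largest} generator degree $\le n-i$, which always lies in $\{d_1,\ldots,d_r\}$ and thereby eliminates the need to assume the $\mathfrak{m}$-fullness condition at non-generator degrees.
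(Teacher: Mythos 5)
Your proof is correct, and it rests on the same key computation as the paper's, namely the degree-wise identity $(\mathfrak{m}^iU)_{i+j}=(\mathfrak{m}_1)^iU_j=(U_{\langle j\rangle})_{i+j}$, which appears as item (a) inside the paper's proof of part (1). Where you diverge is in the organization and in the second half. The paper first proves (1) as an equality, translating both $\hat\gamma\text{-}\mathrm{NZD}$ and $\bigcap_j\gamma\text{-}\mathrm{NZD}\,\mathrm{C}^R_{\langle j\rangle}M$ into injectivity of the multiplication maps $\varphi^{(z,i)}$ and $\psi^{(z,j)}$ and comparing them degree by degree via a commutative square; it then deduces (2) from (1) by observing that for each $j$ with $U_{\langle j\rangle}\neq0$ one has $U_{\langle j\rangle}=\mathfrak{m}^{j-d_{i_j}}U_{\langle d_{i_j}\rangle}$ for $d_{i_j}$ the largest generator degree $\leq j$, so $\mathrm{C}^R_{\langle j\rangle}M=\mathrm{C}^R_{j-d_{i_j}}(\mathrm{C}^R_{\langle d_{i_j}\rangle}M)$ and Lemma \ref{lem_nzd-sg}(3) gives the required containment of $\gamma\text{-}\mathrm{NZD}$ sets. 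You instead prove the chain $\hat\gamma\text{-}\mathrm{NZD}\subseteq\bigcap_j\subseteq\bigcap_{i=1}^r\subseteq\hat\gamma\text{-}\mathrm{NZD}$, and for the last inclusion you carry out a direct colon-ideal argument, replacing the appeal to Lemma \ref{lem_nzd-sg}(3) by the sharpened identity $(\mathfrak{m}^{i+1}U)_{n+1}=(\mathfrak{m}\,U_{\langle d_k\rangle})_{n+1}$ with $d_k$ the largest generator degree $\leq n-i$ --- which is really the same observation (that no generator degrees lie in $(d_k,n-i]$) applied pointwise rather than to the submodule $U_{\langle j\rangle}$ as a whole. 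Both are correct; the paper's version reuses earlier machinery, while yours is slightly more self-contained and makes the role of the generator degrees visible at the level of graded pieces. One small wording caution: when you write $\mathfrak{m}^{n-d_{k(l)}}g_l$ in your identity you mean the degree-$n$ piece $(\mathfrak{m}_1)^{n-d_{k(l)}}g_l$, not the full submodule, so it is cleaner to phrase the identity as $(\mathfrak{m}^iU)_n=\sum_{l:\,d_{k(l)}\leq n-i}R_{n-d_{k(l)}}g_l$.
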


\begin{proof} (1) Let $F_M\to M$ be  a minimal free cover of $ M$ 
and denote $U=\mathrm{Syz}_1^RM$. 
Recall that 
$
\hat\gamma\text{-}\mathrm{NZD}_R M=
\displaystyle\bigcap_{i\geq0}\gamma\text{-}\mathrm{NZD}_R \mathrm{C}_i^R(M)
$ 
by definition and note the follwoing:
\begin{itemize}
\item[(a)]
Since $ (\mathfrak{m}_1)^{i+r}U_{j-r}=
(\mathfrak{m}_1)^{i}(\mathfrak{m}_1)^{r}U_{j-r}\subseteq(\mathfrak{m}_1)^{i}U_{j}$
 for $ i,r\geq0$ and $j\in\mathbb{Z}$, we obtain the follwoing:
\begin{align*}
\left(\mathfrak{m}^iU\right)_{i+j} &=
(\mathfrak{m}_1)^{i}U_{j}+(\mathfrak{m}_1)^{i+1}U_{j-1}+\cdots+(\mathfrak{m}_1)^{i+r}U_{j-r}+\cdots \\
& =(\mathfrak{m}_1)^{i}U_{j}=\left(U_{\langle j \rangle}\right)_{i+j}
\end{align*}
for all $i\geq0$ and $ j\in\mathbb{Z}$.
\item[(b)]
Since $ z\in \mathrm{NZD}_R\,F_M$ and $ \left(U_{\langle j \rangle}\right)_{\leq j-1}=\left(\mathfrak{m}U_{\langle j \rangle}\right)_{\leq j}=0$ for $ j\in\mathbb{Z}$, we conclude that
$$
\left(\psi^{(z,j)}\right)_{\leq j-1}:\left(F_M/U_{\langle j \rangle}\right)_{\leq j-1} \overset{\times z}\longrightarrow \left(F_M/\mathfrak{m}U_{\langle j \rangle}\right)_{\leq j}
$$
is an injective $ K\text{-linear}$ map for all $ j\in\mathbb{Z}$.
\end{itemize}
Using (a), for each $i\in\mathbb{Z}_{\geq 0}$ and $ j\in\mathbb{Z}$, 
we have the following commutative diagram:
$$ 
\begin{matrix}
\left(\varphi^{(z,i)}\right)_{i+j}:&\left(V/\mathfrak{m}^iU\right)_{i+j} & \overset{\times z}\longrightarrow &\left(V/\mathfrak{m}^{i+1}U\right)_{i+j+1}& \\ 
&\mid\mid &  &\mid\mid& \\
&V_{i+j}/(\mathfrak{m}_1)^{i}U_j& \overset{\times z}\longrightarrow &V_{i+j+1}/(\mathfrak{m}_1)^{i+1}U_j& \\ &\mid\mid&  &\mid\mid&\\ 
\left(\psi^{(z,j)}\right)_{i+j}:&\left(V/U_{\langle j \rangle}\right)_{i+j}&\overset{\times z}\longrightarrow &\left(V/\mathfrak{m}U_{\langle j \rangle}\right)_{i+j+1}&.
\end{matrix}$$
Using the above commutative diagram and (b), we can see that the following are equivalent:
\begin{enumerate}[label=(\roman*)]
\item
$\mathrm{Ker}\, \varphi^{(z,i)}=0$ for all $ i\in\mathbb{Z}_{\geq 0}$.
\item
$\mathrm{Ker}\, \psi^{(z,j)}=0$ for all $ j\in\mathbb{Z}$.
\end{enumerate}
Hence, we conclude that  
$\hat{\gamma}\text{-}\mathrm{NZD}_R(M) =
\displaystyle{ \bigcap_{j \in \mathbb{Z}} }\gamma\text{-}\mathrm{NZD}_R\,\mathrm{C}^R_{\langle j \rangle} M$ 
by Lemma \ref{lem_mfull-C}.\\
(2)  By (1), it is enough to show that
 for each $j\in\mathbb{Z}$, there exists  $1 \leq i_j \leq r$ such that
 $$
\gamma\text{-}\mathrm{NZD}_R\,\mathrm{C}^R_{\langle j \rangle} M
\supseteq 
\gamma\text{-}\mathrm{NZD}_R\,\mathrm{C}^R_{\langle d_{j_i} \rangle} M.
 $$
If $\mathrm{pd}_R \mathrm{C}^R_{\langle j \rangle} M = 0$, then 
$\gamma\text{-}\mathrm{NZD}_R\,\mathrm{C}^R_{\langle j \rangle} M
= R_1 \setminus \{0\} \supseteq
\gamma\text{-}\mathrm{NZD}_R\,\mathrm{C}^R_{\langle d_1 \rangle} M$.\\
If $\mathrm{pd}_R \mathrm{C}^R_{\langle j \rangle} M \neq 0$, then 
there exists $1 \leq i_j \leq r$ such that 
$$
(\mathrm{Syz}_1^R M)_{\langle j \rangle} = \mathfrak{m}^{q} (\mathrm{Syz}_1^R M)_{\langle d_{i_j} \rangle}, \text{i.e., } \mathrm{C}^R_{\langle j \rangle} M = \mathrm{C}^R_q\left(\mathrm{C}^R_{\langle d_{i_j} \rangle} M\right),
$$
where $q=j-d_{i_j}$. 
Hence, using Lemma \ref{lem_nzd-sg} (3), we have
$$
\gamma\text{-}\mathrm{NZD}_R\,\mathrm{C}^R_{\langle j \rangle} M =
 \gamma\text{-}\mathrm{NZD}_R\left(\mathrm{C}^R_q\left(\mathrm{C}^R_{\langle d_{i_j} \rangle} M\right)\right) \supseteq 
 \gamma\text{-}\mathrm{NZD}_R\,\mathrm{C}^R_{\langle d_{i_j} \rangle} M.
 $$
\end{proof}

\begin{proposition}\label{prop_rhat-cw} 
Let $\mathbf{deg}(\mathrm{Syz}_1^R M \otimes k) = \{d_1 < \cdots < d_r\}$ for some $d_1, \cdots, d_r \in \mathbb{Z}$.
The following are equivalent:
\begin{enumerate}[label=(\arabic*)]
\item
$\hat{\gamma}\text{-}\mathrm{depth}_R M = n$.
\item
$\gamma\text{-}\mathrm{depth}_R \mathrm{C}^R_{\langle j \rangle} M = n$ for all $j \in \mathbb{Z}$.
\item
$\mathrm{reg}_R\left(\mathrm{Syz}_1^R M\right)_{\langle j \rangle}\leq j$ for all $j \in \mathbb{Z}$.
\item
$\gamma\text{-}\mathrm{depth}_R \mathrm{C}^R_{\langle d_i \rangle} M = n$ for $i = 1, \ldots, r$.
\end{enumerate}
\end{proposition}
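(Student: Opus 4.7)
The plan is to dispatch the equivalences (2) $\Leftrightarrow$ (3) $\Leftrightarrow$ (4) first by reducing everything to Proposition~\ref{prop_sdg} applied to the modules $\mathrm{C}^R_{\langle j \rangle} M$, and then to handle (1) $\Leftrightarrow$ (2) using Lemma~\ref{lem_rhat-cw} together with the Zariski openness from Remark~\ref{rem_zar-open} and the descent statement of Remark~\ref{rem_seq-red}.

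For (2) $\Leftrightarrow$ (3), the key observation is that the first syzygy of $\mathrm{C}^R_{\langle j \rangle} M$ is $(\mathrm{Syz}_1^R M)_{\langle j \rangle}$, which is either zero or generated in the single degree $j$. Proposition~\ref{prop_sdg} then translates $\gamma\text{-}\mathrm{depth}_R \mathrm{C}^R_{\langle j \rangle} M = n$ into componentwise linearity of $(\mathrm{Syz}_1^R M)_{\langle j \rangle}$, which for a module generated in a single degree means $\mathrm{reg}_R (\mathrm{Syz}_1^R M)_{\langle j \rangle} \le j$. For (2) $\Leftrightarrow$ (4), the nontrivial direction uses that every $j$ either falls outside $\{d_1, \ldots, d_r\}$ (so $\mathrm{C}^R_{\langle j \rangle} M = F_M$ is free and has $\gamma$-depth $n$ trivially) or satisfies $\mathrm{C}^R_{\langle j \rangle} M = \mathrm{C}^R_{j - d_i}(\mathrm{C}^R_{\langle d_i \rangle} M)$ for some $d_i \le j$ with $(\mathrm{Syz}_1^R M)_{\langle j \rangle} = \mathfrak{m}^{j - d_i} (\mathrm{Syz}_1^R M)_{\langle d_i \rangle}$; in the latter case Lemma~\ref{lem_nzd-sg}(5) promotes a $\gamma$-regular sequence from $\mathrm{C}^R_{\langle d_i \rangle} M$ to $\mathrm{C}^R_{\langle j \rangle} M$.

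The implication (1) $\Rightarrow$ (2) follows by tracking an $M$-$\hat\gamma$-regular sequence $z_1, \ldots, z_n$ stage by stage: iterating Remark~\ref{rem_C<j>} yields $\mathrm{C}^R_{\langle j \rangle} M \otimes \overline{R}^{(i-1)} \simeq \mathrm{C}^{\overline{R}^{(i-1)}}_{\langle j \rangle} \overline{M}^{(i-1)}$, and since $\overline{z_i} \in \hat\gamma\text{-}\mathrm{NZD}_{\overline{R}^{(i-1)}} \overline{M}^{(i-1)}$, Lemma~\ref{lem_rhat-cw}(1) ensures $\overline{z_i}$ is $\gamma$-regular on this module for every $j$. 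For the reverse (2) $\Rightarrow$ (1), I would argue by induction on $n$: Lemma~\ref{lem_rhat-cw}(2) identifies $\hat\gamma\text{-}\mathrm{NZD}_R M$ with the finite intersection $\bigcap_{i=1}^{r} \gamma\text{-}\mathrm{NZD}_R \mathrm{C}^R_{\langle d_i \rangle} M$ of nonempty Zariski-open subsets of $R_1$ (Remark~\ref{rem_zar-open}(1) plus nonemptiness coming from (2)), so infiniteness of $K$ furnishes some $z_1 \in \hat\gamma\text{-}\mathrm{NZD}_R M$. Remark~\ref{rem_seq-red}, applied to each $\mathrm{C}^R_{\langle d_i \rangle} M$, then gives $\gamma\text{-}\mathrm{depth}_{\overline{R}^{(1)}}(\mathrm{C}^R_{\langle d_i \rangle} M \otimes \overline{R}^{(1)}) = n - 1$; since the degree support of $\mathrm{Syz}_1^{\overline{R}^{(1)}} \overline{M}^{(1)} \otimes k$ is contained in $\{d_1, \ldots, d_r\}$ by Lemma~\ref{lemA_splitting}(1), condition (2) is inherited by $\overline{M}^{(1)}$ over $\overline{R}^{(1)}$, and the induction hypothesis provides an $\overline{M}^{(1)}$-$\hat\gamma$-regular sequence of length $n - 1$ that we prepend with $z_1$.

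I expect the main obstacle to be precisely the descent step in (2) $\Rightarrow$ (1), namely verifying that condition (2) passes from $M$ to $\overline{M}^{(1)}$ over the polynomial ring $\overline{R}^{(1)}$ of one fewer variable after modding out by the chosen $z_1$. This is where the compatibility provided by Remark~\ref{rem_C<j>} and the $\gamma$-depth drop of Remark~\ref{rem_seq-red} must be combined, and where the containment of degree supports coming from Lemma~\ref{lemA_splitting}(1) is essential in order to reduce the check over $\overline{R}^{(1)}$ to the same finite set of degrees $d_1, \ldots, d_r$.
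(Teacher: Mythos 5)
Your proposal is essentially the paper's proof: the same ingredients drive each step---Proposition~\ref{prop_sdg} for (2)~$\Leftrightarrow$~(3), Lemma~\ref{lem_rhat-cw}(1) and Remark~\ref{rem_C<j>} for (1)~$\Rightarrow$~(2), and Lemma~\ref{lem_rhat-cw}(2) together with Remark~\ref{rem_zar-open} and Remark~\ref{rem_seq-red} plus induction on $n$ for the return implication (the paper phrases it as (4)~$\Rightarrow$~(1), you phrase it as (2)~$\Rightarrow$~(1), which is the same argument once (2)~$\Leftrightarrow$~(4) is in hand). The one place you diverge is that you give an independent proof of (4)~$\Rightarrow$~(2) via $\mathrm{C}^R_{\langle j \rangle} M = \mathrm{C}^R_{j-d_{i_j}}(\mathrm{C}^R_{\langle d_{i_j} \rangle} M)$ and Lemma~\ref{lem_nzd-sg}(5), whereas the paper only needs (2)~$\Rightarrow$~(4) trivially and closes the cycle through (1); this adds a little redundancy but is a valid alternative. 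One small slip in your dichotomy: you write that if $j$ falls outside $\{d_1,\dots,d_r\}$ then $\mathrm{C}^R_{\langle j \rangle} M = F_M$ is free---that is only true for $j < d_1$; for $d_1 < j$ with $j \notin \{d_1,\dots,d_r\}$ one still has $(\mathrm{Syz}_1^R M)_{\langle j \rangle} = \mathfrak{m}^{j-d_{i_j}}(\mathrm{Syz}_1^R M)_{\langle d_{i_j}\rangle} \neq 0$ (taking $i_j$ maximal with $d_{i_j}\le j$), so these $j$ must be routed through the second alternative. Your second clause already covers them, so the argument survives, but the stated case split should be $j<d_1$ versus $j\ge d_1$.
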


\begin{proof}
(1) $\Rightarrow$ (2): 
Choose an $M\text{-}\hat{\gamma}\text{-}$sequence $z_1, \ldots, z_{n}$ of length $n$. 
By Lemma $\ref{lem_rhat-cw}$ (1), we have
 $$
 z_1 \in \hat{\gamma}\text{-}\mathrm{NZD}_R(M) = 
 \bigcap_{j \in \mathbb{Z}} \gamma\text{-}\mathrm{NZD}_R\,\mathrm{C}^R_{\langle j \rangle} M.
 $$ 
 Hence, 
 $z_1$ is a $\gamma\text{-regular}$ element on $\mathrm{C}^R_{\langle j \rangle} M$ 
 for all $j \in \mathbb{Z}$. 
We note that 
$$\mathrm{C}^R_{\langle j \rangle} M \otimes \overline{R}^{(1)}\simeq 
\mathrm{C}^{\overline{R}^{(1)}}_{\langle j \rangle} \overline{M}^{(1)}$$ 
by Remark $\ref{rem_C<j>}$, 
where $\overline{R}^{(1)} = R / z_1 R$ and $\overline{M}^{(1)} =M \otimes\overline{R}^{(1)}$. 
Again by Lemma $\ref{lem_rhat-cw}$ (1), we have
$$
\overline{z_2} \in \hat{\gamma}\text{-}\mathrm{NZD}_{\overline{R}^{(1)}}\,\overline{M}^{(1)} = \bigcap_{j \in \mathbb{Z}} \gamma\text{-}\mathrm{NZD}_{\overline{R}^{(1)}}\,\mathrm{C}^{\overline{R}^{(1)}}_{\langle j \rangle} \overline{M}^{(1)},
$$
where $\overline{z_2}$ is the image of $z_2$ in $\overline{R}^{(1)}$. \\
Hence, $\overline{z_2}$ is a $\gamma\text{-regular}$ element on 
$\mathrm{C}^R_{\langle j \rangle} M\otimes \overline{R}^{(1)} $ for all $j \in \mathbb{Z}$, and so on.
Thus, we conclude that the $M\text{-}\hat{\gamma}\text{-}$sequence $z_1, \ldots, z_{n}$ 
is a $\mathrm{C}^R_{\langle j \rangle} M\text{-}\gamma\text{-sequence}$ and 
$\gamma\text{-}\mathrm{depth} \mathrm{C}^R_{\langle j \rangle} M = n$ for all $j \in \mathbb{Z}$.\\
(2) $\Leftrightarrow$ (3): 
If $\mathrm{pd}_R\mathrm{C}^R_{\langle j \rangle} M=0$, i.e., 
$\left(\mathrm{Syz}_1^R M\right)_{\langle j \rangle}=0$, 
then both (2) and (3) hold, since 
$\gamma\text{-}\mathrm{depth}_R \mathrm{C}^R_{\langle j \rangle} M =\mathrm{depth}_R \mathrm{C}^R_{\langle j \rangle} M= n$ and 
$\mathrm{reg}_R\left(\mathrm{Syz}_1^R M\right)_{\langle j \rangle}=-\infty$.
If $\mathrm{pd}_R\mathrm{C}^R_{\langle j \rangle} M\neq0$, 
then the equivalence between (2) and (3) follows from Proposition $\ref{prop_sdg}$.\\
(2) $\Rightarrow$ (4): This is clear.\\
(4) $\Rightarrow$ (1): 
 We prove this by induction on $n$ the number of variables. 
If $n = 1$, then $\hat{\gamma}\text{-}\mathrm{depth}_R M ={\gamma}\text{-}\mathrm{depth}_R M= 1$  from Lemma \ref{lem_delta-sg} (1). 
If $n > 1$, then  we can take an element 
$$
z \in \hat{\gamma}\text{-}\mathrm{NZD}_R(M) = \bigcap_{i=1}^r \gamma\text{-}\mathrm{NZD}_R\left(\mathrm{C}^R_{\langle d_i \rangle} M\right) \neq \emptyset
$$ 
from Lemma $\ref{lem_rhat-cw}$ (2) and Remark $\ref{rem_zar-open}$. 
It is enough to show that 
$$
\hat{\gamma}\text{-}\mathrm{depth}_{\overline{R}} \overline{M} = n-1,
$$
where $\overline{R} = R / z R$ and $\overline{M} = \overline{R} \otimes M$. 
From Remark $\ref{rem_seq-red}$, we have
$$
\gamma\text{-}\mathrm{depth} \mathrm{C}^{\overline{R}}_{\langle d_i \rangle} \overline{M} =n - 1
$$
for $i=1,\cdots,r$.
Since $\mathbf{deg}(\mathrm{Syz}_1^{\overline{R}}\overline{M} \otimes k) \subseteq 
\{d_1 < \cdots < d_r\}$, 
applying the induction hypothesis, 
we obtain $\hat{\gamma}\text{-}\mathrm{depth}_{\overline{R}} \overline{M} = n- 1$. 
\end{proof}

\begin{theorem}\label{thm_main} 
Then the following are equivalent:
\begin{enumerate}[label=(\arabic*)]
\item
$M$ has a componetwise linear first syzygy.
\item
$\gamma$-$\mathrm{depth}M=n$.
\end{enumerate}
\end{theorem}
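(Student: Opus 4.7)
The plan is to bootstrap the theorem from Proposition~\ref{prop_rhat-cw}, which has already shown that $M$ has a componentwise linear first syzygy if and only if $\hat\gamma\text{-}\mathrm{depth}_R M = n$ (this is the equivalence $(1)\Leftrightarrow(3)$ there, since condition $(3)$ is precisely Definition~\ref{def_cwl} applied to $\mathrm{Syz}_1^R M$). Thus the only remaining task is to identify $\hat\gamma\text{-}\mathrm{depth}_R M = n$ with $\gamma\text{-}\mathrm{depth}_R M = n$.

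For the direction $\hat\gamma\text{-}\mathrm{depth}_R M = n \Rightarrow \gamma\text{-}\mathrm{depth}_R M = n$, I would invoke the inclusion $\hat\gamma\text{-}\mathrm{NZD}_R(-)\subseteq\gamma\text{-}\mathrm{NZD}_R(-)$ from Remark~\ref{rem_rhat-elem}(1)---which propagates through successive quotients---to obtain $\hat\gamma\text{-}\mathrm{depth}\leq\gamma\text{-}\mathrm{depth}$. Combined with the elementary upper bound $\gamma\text{-}\mathrm{depth}_R M\leq n$ (because any $M$-$\gamma$-sequence $z_1,\dots,z_r$ is forced to be $K$-linearly independent in $R_1$, as each $\overline{z_i}$ must be nonzero in $\overline R^{(i-1)}_1$, and $\dim_K R_1=n$), the implication is then immediate.

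The substantive direction is $\gamma\text{-}\mathrm{depth}_R M = n \Rightarrow \hat\gamma\text{-}\mathrm{depth}_R M = n$, which I would deduce from Proposition~\ref{prop_rhat-seq-pred}. Pick an $M$-$\gamma$-sequence $z_1,\dots,z_n$ of maximal length. Then $\overline R^{(n)}=R/(z_1,\dots,z_n)R=k$ is a field, so the finitely generated graded module $\overline M^{(n)}$ is automatically free; in particular $\mathrm{pd}_{\overline R^{(n)}}\overline M^{(n)}=0$. The hypothesis of Proposition~\ref{prop_rhat-seq-pred} is thus satisfied for free, and the conclusion of that proposition promotes $z_1,\dots,z_n$ to an $M$-$\hat\gamma$-sequence, so $\hat\gamma\text{-}\mathrm{depth}_R M\geq n$, and equality follows from the bound above.

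I do not expect a serious obstacle at this final step: the theorem is essentially a clean corollary of the machinery built up in the previous sections. The real work has already been carried out---in the splitting Lemma~\ref{lemA_splitting}, the single-generator-degree case Proposition~\ref{prop_sdg}, the componentwise reduction via the modules $\mathrm C^R_{\langle j\rangle}M$ in Proposition~\ref{prop_rhat-cw}, and above all Proposition~\ref{prop_rhat-seq-pred}, which provides the bridge letting a $\gamma$-sequence that completely exhausts the variables of $R$ automatically upgrade to a $\hat\gamma$-sequence. The only micro-verification worth flagging is the bound $\gamma\text{-}\mathrm{depth}_R M\leq n$, which follows inductively from the observation that $\dim_K\overline R^{(r)}_1 = n-r$.
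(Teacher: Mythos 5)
Your proof is correct and follows essentially the same route as the paper: both reduce the theorem to the equivalence $(1)\Leftrightarrow\hat\gamma\text{-}\mathrm{depth}_R M=n$ from Proposition~\ref{prop_rhat-cw}, and both close the gap between $\gamma\text{-}\mathrm{depth}$ and $\hat\gamma\text{-}\mathrm{depth}$ at level $n$ via Proposition~\ref{prop_rhat-seq-pred}, using the automatic fact that $\overline R^{(n)}=k$ forces $\mathrm{pd}_{\overline R^{(n)}}\overline M^{(n)}=0$. You are merely more explicit than the paper about the easy inequality $\hat\gamma\text{-}\mathrm{depth}\leq\gamma\text{-}\mathrm{depth}\leq n$, which the paper leaves implicit.
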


\begin{proof}
Using Proposition $\ref{prop_rhat-seq-pred}$, 
if we choose $\underline{z} = z_1, \ldots, z_{n}$ as an $M\text{-}\gamma\text{-}$sequence, 
then $\underline{z}$ is also an $M\text{-}\hat{\gamma}\text{-}$sequence, since 
$
\mathrm{pd}_{\overline{R}^{(n)}} \overline{M}^{(n)} = \mathrm{pd}_k \overline{M}^{(n)} = 0,
$
where $\overline{R}^{(n)} = R / (z_1, \ldots, z_{n})R$ and $\overline{M}^{(n)} =M\otimes  \overline{R}^{(n)} $. 
Hence, equation (2) is equivalent to $\hat{\gamma}\text{-}\mathrm{depth}_R M = n$. 
Therefore, the assertion follows from Proposition $\ref{prop_rhat-cw}$. 
\end{proof}

\begin{proposition}\label{prop_delta} 
$\delta_R(M)<\infty$.
\end{proposition}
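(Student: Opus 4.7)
The plan is to prove $\delta_R(M) < \infty$ by induction on $n$, following the template of Lemma \ref{lem_delta-sg} (3) but adapted to the general multi-generating-degree case. Write $\{d_1 < \cdots < d_r\} := \mathbf{deg}(\mathrm{Syz}_1^R M \otimes k)$, and let $U = \mathrm{Syz}_1^R M$. The base case $n = 1$ is immediate from Lemma \ref{lem_delta-sg} (1), so assume $n \geq 2$ and the statement for polynomial rings in fewer variables.

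First apply Lemma \ref{lem_delta-sg} (3) to each single-degree-syzygy quotient $\mathrm{C}^R_{\langle d_k \rangle} M$ to obtain a finite value $s_k := \delta_R(\mathrm{C}^R_{\langle d_k \rangle} M)$, and set $T := \max_k s_k$. By Remark \ref{rem_zar-open} (1) each $\gamma\text{-}\mathrm{NZD}_R \mathrm{C}_T^R \mathrm{C}^R_{\langle d_k \rangle} M$ is a non-empty Zariski-open subset of $R_1$, so the infinite-field hypothesis yields a $z_1$ in the finite intersection of these sets together with $\mathrm{NZD}_R(M/\Gamma_\mathfrak{m}(M))$. Pass to $\overline R = R/z_1 R$ and $\overline M = M \otimes \overline R$, apply the inductive hypothesis to obtain $s_0 := \delta_{\overline R}(\overline M) < \infty$, and via Remark \ref{rem_Ci} lift a length-$(n-1)$ $\gamma$-regular sequence on $\mathrm{C}_i^{\overline R} \overline M$ to a sequence $z_2, \ldots, z_n \in R_1$ for all $i \geq s_0$. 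For $i \geq \max(T, s_0, \mathrm{deg}\,\mathrm{soc}\,M - d_1 + 1)$, the sequence $z_1, \ldots, z_n$ should then form an $\mathrm{C}_i^R M$-$\gamma$-regular sequence of length $n$, yielding $\delta_R(M) \leq i < \infty$.

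The main obstacle is the verification that $z_1 \in \gamma\text{-}\mathrm{NZD}_R \mathrm{C}_i^R M$ for the chosen $i$; the clean identity $\gamma\text{-}\mathrm{NZD}_R \mathrm{C}_i^R M = \mathrm{NZD}_R(M/\Gamma_\mathfrak{m}(M))$ of Lemma \ref{lem_nzd-sg} (6) is only valid in the single-generating-degree case. The remedy is to route through Lemma \ref{lem_rhat-cw} (2): it suffices to show $z_1 \in \gamma\text{-}\mathrm{NZD}_R \mathrm{C}^R_{\langle d_k + i \rangle} \mathrm{C}_i^R M$ for each $k$, which is a single-degree condition accessible via Corollary \ref{cor_nzd} (1). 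A careful analysis of the component decomposition $(\mathfrak{m}^i U)_{\langle d_k + i \rangle} = \sum_{j \leq k} \mathfrak{m}^{d_k - d_j + i} U_{\langle d_j \rangle}$, combined with Remark \ref{rem_nzd} (3), should show that for $i$ past the threshold above, each set $\gamma\text{-}\mathrm{NZD}_R \mathrm{C}^R_{\langle d_k + i \rangle} \mathrm{C}_i^R M$ coincides with $\mathrm{NZD}_R(M/\Gamma_\mathfrak{m}(M))$, which contains the $z_1$ chosen above; once established, the argument closes up exactly as in Lemma \ref{lem_delta-sg} (3).
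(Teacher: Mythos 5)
Your opening move coincides with the paper's: apply Lemma~\ref{lem_delta-sg}~(3) to each single\nobreakdash-degree quotient $\mathrm{C}^R_{\langle d_k \rangle}M$ and take the maximum $m$ of the resulting $\delta_R$ values. Where you diverge is in how the argument is closed. The paper does not re-run an induction on $n$: it simply observes, via the computation ``(a)'' in the proof of Lemma~\ref{lem_rhat-cw}, that $(\mathfrak{m}^m U)_{\langle d_k + m \rangle} = \mathfrak{m}^m U_{\langle d_k \rangle}$, hence
$\mathrm{C}^R_{\langle d_k + m\rangle}(\mathrm{C}_m^R M) = \mathrm{C}^R_m(\mathrm{C}^R_{\langle d_k\rangle} M)$,
so that $\gamma\text{-}\mathrm{depth}_R \mathrm{C}^R_{\langle d_k + m \rangle}(\mathrm{C}_m^R M) = n$ for every $k$ by the choice of $m$; Proposition~\ref{prop_rhat-cw} (the equivalence $(4)\Leftrightarrow(1)$) applied to $\mathrm{C}_m^R M$ then gives $\gamma\text{-}\mathrm{depth}_R \mathrm{C}_m^R M = n$, i.e.\ $\delta_R(M)\le m$. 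Your proposal instead inlines the induction that is already packaged inside Proposition~\ref{prop_rhat-cw}: you pick $z_1$ by a Zariski-openness argument, reduce mod $z_1$, apply the inductive hypothesis in $n-1$ variables, and lift. That route can be made to work, but it reproves what $(4)\Rightarrow(1)$ of Proposition~\ref{prop_rhat-cw} already gives you, and it forces you into the verification you flag at the end, which the paper's formulation sidesteps entirely.

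On the gap you identify: it is real in the sense that you leave it as ``should show,'' but it is genuinely fillable, and more simply than your sketch suggests. You do not need the full decomposition $(\mathfrak{m}^i U)_{\langle d_k + i\rangle} = \sum_{j\le k} \mathfrak{m}^{d_k-d_j+i} U_{\langle d_j\rangle}$ nor a further pass through Remark~\ref{rem_nzd}~(3). The identity $(\mathfrak{m}^i U)_{\langle d_k + i\rangle} = \mathfrak{m}^i U_{\langle d_k \rangle}$ already gives $\mathrm{C}^R_{\langle d_k + i\rangle}(\mathrm{C}_i^R M) = \mathrm{C}^R_i(\mathrm{C}^R_{\langle d_k\rangle} M)$, and then the monotonicity of Lemma~\ref{lem_nzd-sg}~(3) applied to the single-degree module $\mathrm{C}^R_{\langle d_k\rangle} M$ yields
$\gamma\text{-}\mathrm{NZD}_R\,\mathrm{C}^R_T(\mathrm{C}^R_{\langle d_k\rangle} M) \subseteq \gamma\text{-}\mathrm{NZD}_R\,\mathrm{C}^R_i(\mathrm{C}^R_{\langle d_k\rangle} M)$
for all $i\ge T$, so your chosen $z_1$ lies in $\gamma\text{-}\mathrm{NZD}_R\,\mathrm{C}^R_{\langle d_k + i\rangle}(\mathrm{C}_i^R M)$ for every $k$ and hence, by Lemma~\ref{lem_rhat-cw}~(2), in $\hat\gamma\text{-}\mathrm{NZD}_R(\mathrm{C}_i^R M) \subseteq \gamma\text{-}\mathrm{NZD}_R(\mathrm{C}_i^R M)$. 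But once you have observed that all $\gamma\text{-}\mathrm{depth}_R\,\mathrm{C}^R_{\langle d_k + i\rangle}(\mathrm{C}_i^R M)$ equal $n$, you are exactly at the hypothesis of Proposition~\ref{prop_rhat-cw}~(4), and invoking it directly makes the entire $z_1, \overline{R}, \ldots$ apparatus unnecessary.
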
 

\begin{proof} If $\mathrm{pd}_RM=0$, then $\delta_R(M)=0$ by Lemma  \ref{lem_delta-sg} (2).
We assume that $\mathbf{deg}(\mathrm{Syz}_1^R M\otimes k) = \{d_1 < \cdots < d_r\}$ for some $d_1, \cdots, d_r \in \mathbb{Z}$. From Lemma \ref{lem_delta-sg} (3), we set
$$
m=\max\left\{\delta_R\left(\mathrm{C}^R_{\langle d_i \rangle}M\right)\mid i=1,\cdots,r\right\}<\infty.
$$
Using (a) in the proof of Proposition \ref{prop_rhat-cw},  we obtain
$$
\left(\mathfrak{m}^mU\right)_{\langle d_i +m\rangle}=
\mathfrak{m}^mU_{\langle d_i \rangle}
\text{, i.e., }\mathrm{C}^R_{\langle d_i +m\rangle}\left(\mathrm{C}^R_mM\right)=
\mathrm{C}^R_m\left(\mathrm{C}^R_{\langle d_i\rangle}M\right)
$$
for $i=1,\cdots,r$, where we denote $U=\mathrm{Syz}_1^R M$. Hence, we have
$$
\gamma\text{-}\mathrm{depth}_R\mathrm{C}^R_{\langle d_i +m\rangle}\left(\mathrm{C}^R_mM\right)=
\gamma\text{-}\mathrm{depth}_R\mathrm{C}^R_m\left(\mathrm{C}^R_{\langle d_i\rangle}M\right)=n
$$
for $i=1,\cdots,r$. Since we have
$$
\mathbf{deg}\left(\mathrm{Syz}_1^R \mathrm{C}^R_mM\otimes k  \right) =
\mathbf{deg}\left(\mathfrak{m}^m\mathrm{Syz}_1^RM\otimes k  \right)
\subseteq \{d_1+m < \cdots < d_r+m\},
$$
we obtain 
$\hat\gamma\text{-}\mathrm{depth}_R\mathrm{C}^R_mM=
\gamma\text{-}\mathrm{depth}_R\mathrm{C}^R_mM=n
$ 
by Proposition  \ref{prop_rhat-cw}. Thus, we conclude that $\delta_R(M)\leq m<\infty$.
\end{proof}

\begin{notation}\label{n-10}
$\mathrm{cd}_R\,M = 
\min\{i \geq 0 \mid \mathrm{Syz}_{i+1}^RM \text{ is a componentwise linear module}\}.$
\end{notation}

\begin{remark}\label{rem_cd}
We note that
$$
\mathrm{pd}_R\,M = 
\min\{i \geq 0 \mid \mathrm{Syz}_{i+1}^RM =0 \}\quad \text{for } M \neq 0
$$
and $0$ is a componentwise linear module by definition. The following inequalities hold:
$$
\mathrm{cd}_R\,M \leq \mathrm{pd}_R\,M \leq n \quad \text{for } M \neq 0.
$$
\end{remark}

\begin{proposition}\label{prop_eb} 
$\mathrm{cd}_R\,M + \gamma\text{-}\mathrm{depth}_R\,M \leq n$.
\end{proposition}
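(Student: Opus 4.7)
The plan is to iterate the syzygy--depth inequality of Lemma~\ref{lem_r-seq-syz}(2) until the $\gamma$-depth reaches $n$ on a sufficiently high syzygy module, then invoke the main theorem (Theorem~\ref{thm_main}) to deduce componentwise linearity of the next syzygy, which bounds $\mathrm{cd}_R M$.

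First I would set $d := \gamma\text{-}\mathrm{depth}_R M$ and note that $d \leq n$: an $M$-$\gamma$-regular sequence has length at most $n$, since after $n$ steps we reduce to $\overline{R}^{(n)} = k$, which contains no nonzero linear form. Next I would prove by induction on $i \geq 0$, applying Lemma~\ref{lem_r-seq-syz}(2) to $\mathrm{Syz}_{i-1}^R M$ in place of $M$, that
$$
\gamma\text{-}\mathrm{depth}_R \mathrm{Syz}_i^R M \;\geq\; \min\{n,\; d + i\}.
$$
Setting $i_0 := n - d \geq 0$, the right-hand side equals $n$, so $\gamma\text{-}\mathrm{depth}_R \mathrm{Syz}_{i_0}^R M = n$.

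Now I would apply Theorem~\ref{thm_main} to the module $\mathrm{Syz}_{i_0}^R M$: its $\gamma$-depth equals $n$, so its first syzygy $\mathrm{Syz}_{i_0+1}^R M$ is componentwise linear. By the definition of $\mathrm{cd}_R M$ (Notation~\ref{n-10}), this gives $\mathrm{cd}_R M \leq i_0 = n - d$, which is the desired inequality. The edge cases are trivial: if $M$ is free then $\mathrm{Syz}_1^R M = 0$ is componentwise linear, $\mathrm{cd}_R M = 0$, and $\gamma\text{-}\mathrm{depth}_R M = n$ by Remark~\ref{rem_r-seq}(3), so the bound is in fact an equality.

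There is no real obstacle; the only substantive input is the iterated syzygy--depth bound, and that is an immediate induction from Lemma~\ref{lem_r-seq-syz}(2). The conceptual content is entirely packed into Theorem~\ref{thm_main} and the remark that raising syzygies increases $\gamma$-depth by one until it saturates at $n$.
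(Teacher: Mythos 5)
Your proof is correct and follows exactly the paper's own argument: set $c = n - \gamma\text{-}\mathrm{depth}_R M$, iterate Lemma~\ref{lem_r-seq-syz}(2) to get $\gamma\text{-}\mathrm{depth}_R \mathrm{Syz}_c^R M = n$, and then apply Theorem~\ref{thm_main} to conclude that $\mathrm{Syz}_{c+1}^R M$ is componentwise linear. You have simply spelled out the induction and the edge cases that the paper leaves implicit.
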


\begin{proof}
Set $c=n- \gamma\text{-}\mathrm{depth}_R\,M$. It is enough to show that 
$\mathrm{Syz}_{c+1}^RM$ is a componentwise linear module. 
Using Lemma \ref{lem_r-seq-syz} (2), we obtain
$$
\gamma\text{-}\mathrm{depth}_R\,\mathrm{Syz}_{c}^RM=n.
$$
Hence, $\mathrm{Syz}_{c+1}^RM$ is a componentwise linear module by Theorem \ref{thm_main}.
\end{proof}

%
%
\section{Finite dimensional modules with linear syzygies}
\label{sec_fdim}

We begin this short section with the following simple examples.

\begin{example}\label{ex_fdim} 
Let $M = R / \mathfrak{m}^{r+1}$ for some $r\geq 0$. 
We denote $R^{[i]} = K[x_1, \ldots, x_i]$, 
$M^{[i]} = R^{[i]} / (x_1, \ldots, x_i)^{r+1}$ for $i = 1, \ldots, n$ 
and $R^{[0]} = M^{[0]} = K$. 
Then the following hold:
\begin{enumerate}[label=(\arabic*)]
\item 
From Corollary \ref{cor_nzd}  (1), 
the sequence of variables $x_1, \ldots, x_i$ is a $\gamma\text{-regular}$ sequence 
on $M^{[i]}$ as a module over $R^{[i]}$ for $i = 1, \ldots, n$.
 \item 
From Theorem \ref{thm_main}, $M^{[i]}$ has has a componetwise linear first syzygy as a module over $R^{[i]}$ 
, since $\gamma\text{-}\mathrm{depth}_{R^{[i]}}M^{[i]}=i$  for $i = 1, \ldots, n$ by (1).
 \item 
 Using Lemma \ref{lem_r-seq} (4), we obtain the following:
$$
\mathrm{p}_{R^{[i]}}\left(M^{[i]}\right) = 
\mathrm{p}_{R^{[i-1]}}\left(M^{[i-1]}\right) + 
\begin{pmatrix} i+r-1 \\ i-1 \end{pmatrix} (1+t)^{i-1}t
$$
for $i = 1, \ldots, n$ and $\mathrm{p}_{R^{[0]}}\left(M^{[0]}\right) = 1$. 
Hence, we have
\begin{align*}
\mathrm{p}_{R}\left(M\right) &= 
1 + \sum\limits_{i=1}^{\nu} \begin{pmatrix} i+r-1 \\ i-1 \end{pmatrix} (1+t)^{i-1} t\\
\mathrm{p}_{R}\left(\mathfrak{m}^{r+1}\right) &= 
\sum\limits_{i=1}^{\nu} \begin{pmatrix} i+r-1 \\ i-1 \end{pmatrix} (1+t)^{i-1}.
\end{align*}
 \item 
 For example, if $n = 3$ and $r = 2$, then
$$
 \mathrm{p}_{R}\left(\mathfrak{m}^{3}\right)  
 = \begin{pmatrix} 2 \\ 0 \end{pmatrix} + 
     \begin{pmatrix} 3 \\ 1 \end{pmatrix} (1+t) + 
     \begin{pmatrix} 4 \\ 2 \end{pmatrix} (1+t)^2 
= 10 + 15t + 6t^2.
$$
Since $\mathfrak{m}^{3}$ has a linear resolution by (2), we obtain
$$
    \mathrm{P}_{R}\left(\mathfrak{m}^{3}\right)(t, u) = u^3 (10 + 15tu + 6t^2u^2)
$$
the graded Poincare series of $\mathfrak{m}^3$. 
\end{enumerate}
\end{example}

\begin{remark}\label{rm_ord}
Unfortunately, a $\gamma\text{-regular}$ sequence depends on its order.
 Let $R = K[x, y]$ and $\mathfrak{m} = (x, y)$, i.e., $n = 2$. 
 We set $I = \mathfrak{m}x + \mathfrak{m}^3$ and $M = R / I$. The sequence $\underline{a} = y, x$ is an $M\text{-}\gamma\text{-regular}$ sequence, but the order-reversed sequence $\underline{a'} = x, y$ is not an $M\text{-}\gamma\text{-regular}$ sequence. 
 On the other hand, let $u = x+y$ and $v = x-y$. Then, both $\underline{b} = u, v$ and $\underline{b'} = v, u$ are $M\text{-}\gamma\text{-regular}$ sequences, provided that $\mathrm{char}(K) \neq 2$.
 \end{remark}

 \begin{lemma}\label{lem_fdim}
Let $\mathbf{deg}(\mathrm{Syz}_1^R M\otimes k) = \{d\}$ for some $d \in \mathbb{Z}$. 
If $\dim_K M < \infty$, then the following are equivalent:
\begin{enumerate}[label=(\arabic*)]
\item 
$\gamma\text{-}\mathrm{NZD}_R M \neq \emptyset$.
\item 
$M$ is a direct sum of principal modules of the form 
$R / \mathfrak{m}^n$ for some positive integer $n$, up to shifts.
\item 
$\gamma\text{-}\mathrm{depth}_R M =n$.
\item 
$M$ has a componetwise linear first syzygy.
\end{enumerate}
\end{lemma}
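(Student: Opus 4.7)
The plan is to leverage Proposition \ref{prop_sdg} for the equivalence $(3) \Leftrightarrow (4)$ and handle the remaining implications by analyzing the minimal graded free cover under the finite-dimension hypothesis. First I would observe that $(3) \Rightarrow (1)$ is essentially immediate: the hypothesis $\mathbf{deg}(\mathrm{Syz}_1^R M \otimes k) = \{d\}$ forces $\mathrm{Syz}_1^R M \neq 0$ and hence $n \geq 1$, so the first term of any $M$-$\gamma$-regular sequence of length $n$ witnesses $\gamma\text{-}\mathrm{NZD}_R M \neq \emptyset$. What remains are the implications $(1) \Rightarrow (2)$ and $(2) \Rightarrow (4)$.

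For $(1) \Rightarrow (2)$, the first step is to invoke Corollary \ref{cor_nzd} (2), which under the single-degree hypothesis translates (1) into the socle bound $\mathrm{deg}\,\mathrm{soc}\,M \leq d - 1$. Next I would use $\dim_K M < \infty$ to argue that the top-degree component of $M$ is annihilated by $\mathfrak{m}$ and hence lies in $\mathrm{soc}\,M$, giving $M_j = 0$ for all $j \geq d$. Let $\pi : F_M \to M$ be a minimal graded free cover and $U = \mathrm{Syz}_1^R M$; then the vanishing $M_j = 0$ for $j \geq d$ forces $U_j = (F_M)_j$ for $j \geq d$, and in particular $U_d = (F_M)_d$. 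Combining this with the hypothesis that $U$ is generated in degree $d$, I obtain $U = R \cdot U_d = R \cdot (F_M)_d$. Writing $F_M = \bigoplus_i R(-a_i)$, where necessarily $a_i \leq d - 1$, the $R$-submodule generated by $(F_M)_d = \bigoplus_i R_{d - a_i}$ is precisely $\bigoplus_i \mathfrak{m}^{d - a_i}(-a_i)$, whence
\[
M \simeq \bigoplus_i (R/\mathfrak{m}^{d - a_i})(-a_i),
\]
which is the decomposition asserted in (2).

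For $(2) \Rightarrow (4)$, I would note that the first syzygy of each summand $(R/\mathfrak{m}^{n_i})(-a_i)$ is $\mathfrak{m}^{n_i}(-a_i)$, which has a linear resolution and is therefore componentwise linear; since both the first syzygy functor and componentwise linearity are compatible with finite direct sums, $\mathrm{Syz}_1^R M$ will be componentwise linear, giving (4). I do not expect any real obstacle in carrying this out; the only delicate point is recognising that Corollary \ref{cor_nzd} (2) together with finite-dimensionality rigidly pins down $U$ inside $F_M$, after which the structural decomposition is forced.
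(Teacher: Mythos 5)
Your proof is correct and follows essentially the same route as the paper, with some cosmetic variations worth noting. For $(1)\Rightarrow(2)$ you go through Corollary~\ref{cor_nzd}~(2) to get $\mathrm{deg}\,\mathrm{soc}\,M\leq d-1$, then use $\dim_K M<\infty$ to conclude $M_j=0$ for $j\geq d$, whence $U_d=(F_M)_d$ and $U=R\cdot(F_M)_d=\bigoplus_i \mathfrak{m}^{d-a_i}(-a_i)$; the paper instead invokes Lemma~\ref{lem_nzd-sg}~(7) (which is itself a consequence of that same corollary) together with the observation that $\mathrm{Syz}_1^R M \underset{F_M}{:}\mathfrak{m}^\infty = F_M$ when $M$ has finite length, arriving at $\mathrm{Syz}_1^R M = (F_M)_{\geq d}$. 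These are interchangeable phrasings of the same computation. For the implication out of (2), the paper proves $(2)\Rightarrow(3)$ by citing Example~\ref{ex_fdim}, which constructs the $\gamma$-regular sequence $x_1,\ldots,x_n$ explicitly for $R/\mathfrak{m}^{r+1}$ using the machinery of the paper; you instead prove $(2)\Rightarrow(4)$ directly from the classical fact that $\mathfrak{m}^n$ has a linear (hence componentwise linear) resolution, stability of componentwise linearity under shifts and finite direct sums doing the rest. Your route here is a bit more self-contained and avoids a dependence on the example; both are valid since you also establish $(3)\Leftrightarrow(4)$ via Proposition~\ref{prop_sdg} (the single-degree case of Theorem~\ref{thm_main}, which is exactly the hypothesis in force). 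No gaps.
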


\begin{proof}
(3) $\Leftrightarrow$ (4): 
This follows from Theorem \ref{thm_main}.\\  
(2) $\Rightarrow$ (3): 
This follows from Example \label{ex_fdim}. \\ 
(3) $\Rightarrow$ (1): 
This holds from definitions. \\ 
(1) $\Rightarrow$ (2): 
Let $F_M = \bigoplus_{j \in \mathbb{Z}} R^{m_j}(-j) \to M$ be a minimal graded free cover of $M$. 
We note that the follwing holds:
$$
\mathrm{Syz}_1^RM\underset{F_M}:\mathfrak{m}^{\infty} = F_M. 
$$
By applying Lemma \ref{lem_nzd-sg} (7), we obtain
$$
\mathrm{Syz}_1^R M = (F_M)_{\geq d} = \bigoplus_{j \in \mathbb{Z}} \left(R^{m_j}(-j)\right)_{\geq d} = \bigoplus_{j \in \mathbb{Z}} \left(\mathfrak{m}^{d-j}\right)^{m_j} = \bigoplus_{j > d} \left(\mathfrak{m}^{d-j}\right)^{m_j},
$$
where we denote $\mathfrak{m}^{d-j} = R$ if $d-j \leq 0$. 
However, note that $m_j = 0$ when $d-j \leq 0$, as $F_M$ is a minimal graded free cover of $M$. 
Thus, $M$ can be expressed as a direct sum of principal modules of the form $R / \mathfrak{m}^n$ 
for some positive integer $n$. This completes the proof. 
\end{proof}

%
%
\section{Modules with linear syzygies  over a polynomial ring in two variables}
\label{sec_2var}
In this section, we assume that $R = K[x_1, x_2]$, i.e., $n = 2$. 
We denote $\overline{R} = R/Rz$ and $\overline{M} = M\otimes \overline{R}$ 
for $z \in R_1 \setminus \{0\}$ and  $M$ a finitely generated graded module over $R$.

\begin{lemma}\label{lem_2-dim}
The following are equivalent:
\begin{enumerate}[label=(\arabic*)]
\item 
$\gamma\text{-}\mathrm{NZD}_R(M) \neq \emptyset$.
\item 
$\beta_1^R(M) = \alpha(M; z) + \beta_1^{\overline{R}}(\overline{M})$ for some $z \in R_1 \setminus \{0\}$.
\item 
$M$ has a componetwise linear first syzygy, i.e., $\gamma\text{-}\mathrm{depth}_R(M) = 2$.
\end{enumerate}
\end{lemma}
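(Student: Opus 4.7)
The plan is to establish the cycle $(1) \Leftrightarrow (2)$, $(3) \Rightarrow (1)$, and $(1) \Rightarrow (3)$, relying almost entirely on machinery already developed earlier in the paper.

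The equivalence $(1) \Leftrightarrow (2)$ is immediate from Lemma \ref{lem_r-reg-cr} (2), which already asserts that for any nonzero linear element $z$ the condition $z \in \gamma\text{-}\mathrm{NZD}_R M$ is characterized by the equality $\beta_1^R(M) = \alpha(M; z) + \beta_1^{\overline{R}}(\overline{M})$. Similarly, $(3) \Rightarrow (1)$ is trivial: if $\gamma\text{-}\mathrm{depth}_R M = 2$, then by Definition \ref{def_r-seq} one has an $M$-$\gamma$-regular sequence of length $2$, whose first entry witnesses $\gamma\text{-}\mathrm{NZD}_R M \neq \emptyset$.

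The main implication is $(1) \Rightarrow (3)$, and the plan is to upgrade a single $\gamma$-regular element to a full $\gamma$-regular sequence of length $n=2$. Let $z \in \gamma\text{-}\mathrm{NZD}_R M$ and pick any $z' \in R_1$ linearly independent of $z$. Since $R = K[x_1,x_2]$, the quotient $\overline{R} = R/zR$ is a polynomial ring in one variable, with $\overline{z'}$ a generator of $\overline{R}_1$. The finitely generated graded $\overline{R}$-module $\overline{M} = M/zM$ then decomposes, up to shifts, as a direct sum of copies of $\overline{R}$ and $\overline{R}/(\overline{z'})^r$ for various $r \geq 1$. This is precisely the situation handled in the proof of Lemma \ref{lem_delta-sg} (1), from which one reads off that $\overline{z'} \in \gamma\text{-}\mathrm{NZD}_{\overline{R}}\overline{M}$. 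Hence $z, z'$ is an $M$-$\gamma$-regular sequence, so $\gamma\text{-}\mathrm{depth}_R M \geq 2$, and since $n = 2$ equality holds. Applying Theorem \ref{thm_main} concludes that $M$ has a componentwise linear first syzygy.

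The only potential subtlety — and what I view as the ``hard part'' here — is the verification that $\overline{z'}$ is $\gamma$-regular on $\overline{M}$ regardless of the torsion structure of $\overline{M}$. This reduces to checking the Betti identity of Lemma \ref{lem_r-reg-cr} (2) summand by summand: for a free summand $\overline{R}(-a)$ both sides vanish, and for a torsion summand $\overline{R}/(\overline{z'})^r(-b)$ both sides equal $1$, so the identity holds additively. This computation is already baked into Lemma \ref{lem_delta-sg} (1), so no new argument is needed. Once this step is in hand, the theorem follows purely by combining the results cited above.
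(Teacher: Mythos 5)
Your proposal is correct and follows the paper's own argument essentially step for step: the equivalence $(1)\Leftrightarrow(2)$ via Lemma \ref{lem_r-reg-cr} (2), $(3)\Rightarrow(1)$ from the definitions, and $(1)\Rightarrow(3)$ by reducing modulo $z$ to a one-variable ring and invoking Lemma \ref{lem_delta-sg} (1), with Theorem \ref{thm_main} supplying the ``i.e.''\ in (3). The only difference is that you spell out the direct-sum decomposition over $\overline{R}\cong K[t]$ and the summandwise Betti check, which is precisely the content already encapsulated in Lemma \ref{lem_delta-sg} (1).
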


\begin{proof}
(1) $\Leftrightarrow$ (2): This follows from Lemma \ref{lem_r-reg-cr} (2).  \\
(3) $\Rightarrow$ (1): This holds from definitions.\\ 
(1) $\Rightarrow$ (3): 
Let $z \in \gamma\text{-}\mathrm{NZD}_R(M)$, 
then $\gamma\text{-}\mathrm{NZD}_{\overline{R}}(\overline{M}) \neq \emptyset$ 
by Lemma \ref{lem_delta-sg}  (1) since $\text{Kull-dim}(\overline{R}) = 1$.
Hence, $\gamma\text{-}\mathrm{depth}_R(M) = 2$. 
\end{proof}

\begin{remark}\label{rm_2-dim}
The following hold:
\begin{enumerate}[label=(\arabic*)]
\item  
$\gamma\text{-}\mathrm{NZD}_R(M) = \hat{\gamma}\text{-}\mathrm{NZD}_R(M)$ 
since any $M\text{-}\gamma\text{-regular}$ element can be extended to an 
$M\text{-}\gamma\text{-regular}$ sequence of length 2.
\item
If $\mathrm{pd}_R(M) \leq1$, then 
$\mathrm{NZD}_R M = \gamma\text{-}\mathrm{NZD}_R M \neq \emptyset$ 
by Remark \ref{rem_r-seq} (1), since $\mathrm{depth}_R M\geq1$ 
for the sake of the Auslander-Buchsbaum formula. 
Especially, Using Lemma \ref{lem_2-dim}, $M$ has a componetwise linear first syzygy.
\end{enumerate}
\end{remark}

We write $f_1 \mid f_2$ to denote that $f_2$ is divisible by $f_1$ for $f_1, f_2 \in R$.

\begin{lemma}\label{lem_2-dim-prin}
Let $0 \neq M \simeq R/I$ be a nonzero principal module with 
$\mathrm{pd}_R(M) =2$.
We set
$\mathrm{indeg}(I\otimes k) = d$, 
$J_1 = I_{\langle d \rangle} \underset{R}{:} \mathfrak{m}^\infty$, 
$J = I \underset{R}{:} \mathfrak{m}^\infty$, 
and $\mathrm{indeg}(J\otimes k)= e$. 
Then the following hold:
\begin{enumerate}[label=(\arabic*)]
\item
$J = Rf$ and $J_1 = Rf'$ for some homogeneous polynomials $f, f' \in R$ with $f \mid f'$.
\item 
$\mathrm{NZD}_R\left(\mathrm{C}^R_{\langle d \rangle}M / \Gamma_{\mathfrak{m}}(\mathrm{C}^R_{\langle d \rangle}M)\right) \subseteq \mathrm{NZD}_R\left(M / \Gamma_{\mathfrak{m}}M\right)$.
\item 
$I_{\langle d \rangle} \not\subseteq Rz$ 
for any $z \in \mathrm{NZD}_R\left(\mathrm{C}^R_{\langle d \rangle}M / \Gamma_{\mathfrak{m}}(\mathrm{C}^R_{\langle d \rangle}M)\right)$.
\item 
$\dim_K \frac{J}{I + zJ} = d - e$ 
for any $z \in \mathrm{NZD}_R\left(\mathrm{C}^R_{\langle d \rangle}M / \Gamma_{\mathfrak{m}}(\mathrm{C}^R_{\langle d \rangle}M)\right)$.
\item 
$\alpha(M; z) = d - e$ 
for any $z \in \mathrm{NZD}_R\left(\mathrm{C}^R_{\langle d \rangle}M / \Gamma_{\mathfrak{m}}(\mathrm{C}^R_{\langle d \rangle}M)\right)$.
\end{enumerate}
\end{lemma}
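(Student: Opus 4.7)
Since $M=R/I$ is principal, its minimal graded free cover is $R\twoheadrightarrow R/I$, so $F_M=R$ and $\mathrm{Syz}_1^R M=I$. Consequently $\mathrm{C}^R_{\langle d\rangle}M=R/I_{\langle d\rangle}$, and passing to $\mathfrak{m}$-saturations yields $\mathrm{C}^R_{\langle d\rangle}M/\Gamma_{\mathfrak{m}}(\mathrm{C}^R_{\langle d\rangle}M)=R/J_1$ and $M/\Gamma_{\mathfrak{m}} M=R/J$. The entire lemma therefore reduces to statements about the ideals $I,I_{\langle d\rangle},J,J_1$ inside the two-dimensional regular UFD $R=K[x_1,x_2]$.

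For (1), Auslander--Buchsbaum applied to $\mathrm{pd}_R M=2$ gives $\mathrm{depth}_R M=0$, so $J\ne I$. By construction $J$ and $J_1$ are $\mathfrak{m}$-saturated proper ideals, hence $R/J$ and $R/J_1$ have positive depth and Krull dimension $1$; they are Cohen--Macaulay, and $J,J_1$ are thus height-one unmixed ideals, hence principal in the UFD $R$. The inclusion $I_{\langle d\rangle}\subseteq I$ passes to saturations and yields $J_1\subseteq J$, i.e.\ $f\mid f'$. Statements (2) and (3) are then immediate. For (2), $f\mid f'$ forces every height-one prime containing $f$ also to contain $f'$, so $\mathrm{Ass}(R/J)\subseteq\mathrm{Ass}(R/J_1)$ and the inclusion of non-zero-divisors reverses. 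For (3), if $I_{\langle d\rangle}\subseteq Rz$, saturating yields $J_1\subseteq Rz:\mathfrak{m}^{\infty}=Rz$ (as $Rz$ is prime and therefore saturated), so $z\mid f'$; but then $z$ lies in the associated prime $(z)$ of $R/J_1$, contradicting $z\in\mathrm{NZD}_R(R/J_1)$.

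The core of the lemma is (4); (5) then follows from a length count. Multiplication by $f$ gives a graded isomorphism $R(-e)\xrightarrow{\sim}J$ under which $I\subseteq J$ corresponds to the $\mathfrak{m}$-primary ideal $I':=I:f$; so $I=fI'$ and $\mathrm{indeg}(I')=d-e$. This isomorphism is $z$-equivariant, whence $(J/I)/z(J/I)\cong R/(I'+zR)$. Choosing $y\in R_1$ with $R_1=Kz\oplus Ky$ identifies this in turn with $K[y]/\bar{I}'=K[y]/(\bar{y}^{s})$ for a unique $s\ge 0$, and the problem reduces to proving $s=d-e$. The lower bound $s\ge d-e$ is immediate from $\mathrm{indeg}(I')=d-e$. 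The delicate step is the upper bound $s\le d-e$, which is where (3) is essential: $I_{\langle d\rangle}=fI'_{\langle d-e\rangle}\not\subseteq Rz$, and since $z\nmid f$ (for $z\nmid f'$ and $f\mid f'$), cancellation in the UFD $R$ gives $I'_{\langle d-e\rangle}\not\subseteq Rz$, producing an element of $I'_{d-e}$ whose image in $K[y]$ has degree $d-e$ and forcing $\bar{y}^{d-e}\in\bar{I}'$.

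For (5), by (2), $z$ is a non-zero-divisor on $R/J=M/\Gamma_{\mathfrak{m}} M$, so $(0:_M z)\subseteq\Gamma_{\mathfrak{m}} M=J/I$ and in fact $(0:_M z)=(0:_{J/I}z)$. Since $J/I$ has finite $K$-dimension, the Euler characteristic of the sequence $0\to(0:_{J/I}z)\to J/I\xrightarrow{z}J/I\to J/(I+zJ)\to 0$ vanishes, so $\alpha(M;z)=\dim_K(0:_{J/I}z)=\dim_K J/(I+zJ)=d-e$ by (4). The main obstacle is the equality $s=d-e$ inside (4): one must both set up the $z$-equivariant isomorphism $(J/I)/z(J/I)\cong K[y]/\bar{I}'$ carefully with respect to the grading and extract the nontriviality of $\bar{I}'$ in degree $d-e$ from (3) via the $f$-cancellation. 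Everything quantitative rides on this reduction to a principal Artinian quotient of $K[y]$.
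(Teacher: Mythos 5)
The proposal is correct and follows essentially the same route as the paper's own proof: it establishes that $J$ and $J_1$ are principal via the Auslander--Buchsbaum formula / positive depth, deduces $f \mid f'$ from the inclusion $J_1 \subseteq J$, derives $(3)$ by saturating the assumed inclusion $I_{\langle d\rangle}\subseteq Rz$, reduces $(4)$ to the $\mathfrak m$-primary ideal $I'=I:f$ and the identification $J/(I+zJ)\simeq R/(I'+Rz)$, and obtains $(5)$ from the length equality $\dim_K(0:_{J/I}z)=\dim_K J/(I+zJ)$. The only cosmetic differences are that you phrase $(1)$ via unmixedness in a UFD rather than the paper's "free of rank one" formulation, you make the determination of the exponent $d-e$ in $(4)$ slightly more explicit by separating the two inequalities for $s$, and you invoke $z\nmid f$ where it is not actually needed (if $I'_{\langle d-e\rangle}\subseteq Rz$ then $fI'_{\langle d-e\rangle}\subseteq Rz$ immediately); none of these affect correctness.
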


\begin{proof}
(1) We note that $\mathrm{pd}_R(M / \Gamma_{\mathfrak{m}}(M)) \leq 1$ since $\mathrm{depth}_R(M / \Gamma_{\mathfrak{m}}(M)) \geq 1$ by the Auslander-Buchsbaum formula. Hence $J$ is a free module of rank 1, i.e., $J = Rf$ for some homogeneous element $f \in R$. \\
Similarly, $\mathrm{pd}_R\left(\mathrm{C}^R_{\langle d \rangle}M / \Gamma_{\mathfrak{m}}(\mathrm{C}^R_{\langle d \rangle}M)\right) \leq 1$, so $J_1 = Rf'$ for some homogeneous element $f' \in R$. 
Since $Rf = J \supseteq J_1 = Rf'$, we obtain $f \mid f'$.\\
(2) From (1), we obtain
$$
\mathrm{C}^R_{\langle d \rangle}M / \Gamma_{\mathfrak{m}}(\mathrm{C}^R_{\langle d \rangle}M) \simeq 
R / J_1 \simeq 
R / f'R\text{ and }M / \Gamma_{\mathfrak{m}}M \simeq 
R / J \simeq R / fR.
$$
Since $f \mid f'$, the assertion follows.\\
(3) If $\mathrm{C}^R_{\langle d \rangle}M / \Gamma_{\mathfrak{m}}(\mathrm{C}^R_{\langle d \rangle}M) = 0$, then $\dim_K(R / I_{\langle d \rangle}) < \infty$. 
Hence $I_{\langle d \rangle} \not\subseteq Rz$ for any $z \in \mathrm{NZD}_R\left(\mathrm{C}^R_{\langle d \rangle}M / \Gamma_{\mathfrak{m}}(\mathrm{C}^R_{\langle d \rangle}M)\right) = \mathrm{NZD}_R(0) = R_1 \setminus \{0\}$. \\
Assume that $\mathrm{C}^R_{\langle d \rangle}M / \Gamma_{\mathfrak{m}}(\mathrm{C}^R_{\langle d \rangle}M) \neq 0$. Let $z \in \mathrm{NZD}_R\left(\mathrm{C}^R_{\langle d \rangle}M / \Gamma_{\mathfrak{m}}(\mathrm{C}^R_{\langle d \rangle}M)\right)$.\\
If $I_{\langle d \rangle} \subseteq Rz$, then $Rf' = J_1 = I_{\langle d \rangle} \underset{R}{:} \mathfrak{m}^\infty \subseteq Rz \underset{R}{:} \mathfrak{m}^\infty = Rz$. \\
Hence, 
$z \notin \mathrm{NZD}_R\left(\mathrm{C}^R_{\langle d \rangle}M / \Gamma_{\mathfrak{m}}(\mathrm{C}^R_{\langle d \rangle}M)\right)=\mathrm{NZD}_R\left(R/Rf'\right)$.
This is a contradiction. Hence $I_{\langle d \rangle} \not\subseteq Rz$.\\
(4) We note that our assumption $\mathrm{pd}_R(M) =2$ implies $d>e$, since $\mathrm{depth}_RM=0$ by the Auslander-Buchsbaum formula.
Since $I \subseteq J = Rf$,  we have
$$
I = I'f
$$ for some homogeneous ideal $I' \subseteq R$. 
Here, we note that $\mathrm{indeg}(I'\otimes k)+e = d$. 
Using (3), $I_{\langle d \rangle} = I'_{\langle d - e \rangle}f \not\subseteq zR$. 
Hence, $I'_{\langle d - e \rangle} \not\subseteq zR$. 
Thus, we conclude that 
$$
I' + Rz = Rw^{d - e} + Rz
$$ 
for some $w \in R_1 \setminus \{0\}$. 
We obtain the following:
$$
 \frac{J}{I + zJ} = \frac{Rf}{I'f + Rfz} \simeq \frac{R}{I' + Rz} \simeq K[w] / (w^{d - e}).
$$
This implies
$$\dim_K \frac{J}{I + zJ} = d - e.$$
(5) Since $z \in \mathrm{NZD}_R\left(\mathrm{C}^R_{\langle d \rangle}M / \Gamma_{\mathfrak{m}}(\mathrm{C}^R_{\langle d \rangle}M)\right) \subseteq \mathrm{NZD}_R\left(M / \Gamma_{\mathfrak{m}}\,M\right)$ by (2), we have
$$
\alpha(M; z) = \dim_K\left(0 \underset{M}{:} z\right) 
=\dim_K\left(0 \underset{\Gamma_{\mathfrak{m}}\,M}{:} z\right) 
= \dim_K\left(0 \underset{J/I}{:} z\right).
$$
Here, we note that $\dim_K(J / I) < \infty$.  We obtain
$$
 \alpha(M; z) = \dim_K\mathrm{Coker}\left(J/I \overset{\times z}{\longrightarrow} J/I\right)
 = \dim_K\frac{J}{I + zJ} = d-e,
$$
where the last equality follows from (4). 
\end{proof}

We have slightly extended Theorem 4 in \cite{watanabe1987m} as follows:

\begin{theorem}\label{thm_2-dim-prin}
Let $0 \neq M \simeq R/I$ be a nonzero principal module with $\mathrm{pd}_R(M) =2$. 
Then the following are equivalent:
\begin{enumerate}[label=(\arabic*)]
\item 
$\beta_1^R(M) = \mathrm{indeg}(I\otimes k) - \mathrm{indeg} \left(\Gamma_{\mathfrak{m}}\,M\right) + 1$.
\item 
$M$ has a componetwise linear first syzygy.
\end{enumerate}
\end{theorem}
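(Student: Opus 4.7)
The plan is to select a preferred linear form $z$ for which both quantities on the right of Lemma \ref{lem_r-reg-cr}(1) can be computed, and then convert the resulting numerical equality into the componentwise linear property via Lemma \ref{lem_r-reg-cr}(2) and Lemma \ref{lem_2-dim}. As a preliminary, I would record that $\mathrm{indeg}(\Gamma_{\mathfrak m}M)=e$: writing $J=Rf$ with $\deg f=e$, the class of $f$ is a nonzero element of $\Gamma_{\mathfrak m}M\simeq J/I$ in degree $e$. Here the inequality $d>e$ is automatic, since $\mathrm{pd}_R M=2$ forces $\mathrm{depth}_R M=0$ by Auslander--Buchsbaum (so $J\supsetneq I$), whereas $d=e$ would already force $I=Rf=J$. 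Hence condition (1) reads $\beta_1^R(M)=d-e+1$.

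Since $K$ is infinite, one may pick $z\in S:=\mathrm{NZD}_R\bigl(\mathrm{C}^R_{\langle d\rangle}M/\Gamma_{\mathfrak m}(\mathrm{C}^R_{\langle d\rangle}M)\bigr)$. For any such $z$, Lemma \ref{lem_2-dim-prin}(5) gives $\alpha(M;z)=d-e$, while Lemma \ref{lem_2-dim-prin}(3) shows $I_{\langle d\rangle}\not\subseteq Rz$ and a fortiori $I\not\subseteq Rz$; since $\overline R\cong K[t]$ is a PID, $\overline I$ is a nonzero principal ideal and $\beta_1^{\overline R}(\overline M)=1$. Lemma \ref{lem_r-reg-cr}(1) then yields the uniform bound
$$
\beta_1^R(M)\;\leq\;\alpha(M;z)+\beta_1^{\overline R}(\overline M)\;=\;d-e+1,
$$
with equality equivalent to $z\in\gamma\text{-}\mathrm{NZD}_R M$ by Lemma \ref{lem_r-reg-cr}(2).

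The direction $(1)\Rightarrow(2)$ is then immediate: equality in the displayed bound forces $z\in\gamma\text{-}\mathrm{NZD}_R M\neq\emptyset$, and Lemma \ref{lem_2-dim} delivers a componentwise linear first syzygy. The converse is the main obstacle, since an arbitrary element of $\gamma\text{-}\mathrm{NZD}_R M$ need not lie in $S$, so the equality case of the display cannot be read off directly. To overcome this I would invoke the $\hat\gamma$-machinery of Sections \ref{sec_rhat-elem-seq}--\ref{sec_rhat_revisited}: assuming (2), Theorem \ref{thm_main} gives $\gamma\text{-}\mathrm{depth}_R M=n=2$, and Proposition \ref{prop_rhat-seq-pred} (applied with $r=n$, so that $\overline M^{(n)}$ is a $k$-vector space and has $\mathrm{pd}=0$) promotes the length-$2$ $\gamma$-sequence to a $\hat\gamma$-sequence; in particular $\hat\gamma\text{-}\mathrm{NZD}_R M\neq\emptyset$. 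By Lemma \ref{lem_rhat-cw}(2) any such element lies in $\gamma\text{-}\mathrm{NZD}_R\mathrm{C}^R_{\langle d\rangle}M$, and Corollary \ref{cor_nzd}(3) (applicable because $\mathrm{Syz}_1^R\mathrm{C}^R_{\langle d\rangle}M=I_{\langle d\rangle}$ is generated in the single degree $d$) identifies this with $S$. Thus one can choose $z\in\gamma\text{-}\mathrm{NZD}_R M\cap S$ simultaneously, and substituting it into the display produces $\beta_1^R(M)=d-e+1$, completing the proof.
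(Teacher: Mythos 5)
Your proof is correct and takes essentially the same approach as the paper: compute $\alpha(M;z)=d-e$ via Lemma \ref{lem_2-dim-prin}(5) and $\beta_1^{\overline R}(\overline M)=1$ for $z$ in the preferred set $S$, then convert the resulting numerical identity into $z\in\gamma\text{-}\mathrm{NZD}_R M$ via Lemma \ref{lem_r-reg-cr}(2) and into componentwise linearity via Lemma \ref{lem_2-dim}. For the direction $(2)\Rightarrow(1)$ the paper produces a $z\in\gamma\text{-}\mathrm{NZD}_R M\cap S$ more economically by invoking Remark \ref{rm_2-dim}(1) (in two variables $\gamma\text{-}\mathrm{NZD}_R M=\hat\gamma\text{-}\mathrm{NZD}_R M$, since any $\gamma$-regular element extends to a sequence of length $2$), so your worry that a $\gamma$-regular element might miss $S$ is in fact vacuous; your longer route through Theorem \ref{thm_main} and Proposition \ref{prop_rhat-seq-pred} reaches the same conclusion and is logically sound.
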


\begin{proof}
(1) $\Rightarrow$ (2): 
Let $\overline{R} = R / Rz$ for 
$z \in \mathrm{NZD}_R\left(\mathrm{C}^R_{\langle d \rangle}M / \Gamma_{\mathfrak{m}}(\mathrm{C}^R_{\langle d \rangle}M)\right)$ 
and $J = I \underset{R}{:} \mathfrak{m}^\infty$.
We note that 
$\mathrm{indeg}(J\otimes k)= \mathrm{indeg} \left(\Gamma_{\mathfrak{m}}\,M\right)$ 
and $\beta_1^{\overline{R}}(\overline{M}) = 1$. 
Using Lemma \ref{lem_2-dim-prin} (5), we obtain
$$
\beta_1^R(M) = 
\mathrm{indeg}(I\otimes k)- \mathrm{indeg} \left(\Gamma_{\mathfrak{m}}\,M\right) + 1 = 
\alpha(M; z) + \beta_1^{\overline{R}}(\overline{M}).
$$
Hence (2) holds from Lemma $\ref{lem_2-dim}$.  \\
(2) $\Rightarrow$ (1): By Lemma \ref{lem_2-dim} and Remark \ref{rm_2-dim} (1),  
we can choose an element
$$
z \in \gamma\text{-}\mathrm{NZD}_R(M) = \hat{\gamma}\text{-}\mathrm{NZD}_R(M) \subseteq \gamma\text{-}\mathrm{NZD}_R\left(\mathrm{C}^R_{\langle d \rangle}M\right). $$
Then, by Lemma \ref{lem_r-reg-cr} (2), the following holds:
$$
\beta_1^R(M) = \alpha(M; z) + \beta_1^{\overline{R}}(\overline{M}).
$$
Since
$
\gamma\text{-}\mathrm{NZD}_R\left(\mathrm{C}^R_{\langle d \rangle}M\right) =
 \mathrm{NZD}_R\left(\mathrm{C}^R_{\langle d \rangle}M / \Gamma_{\mathfrak{m}}(\mathrm{C}^R_{\langle d \rangle}M)\right)
$ by Corollary \ref{cor_nzd} (3), 
using Lemma \ref{lem_2-dim-prin} (5), we obtain
$$
\beta_1^R(M) =   \mathrm{indeg}(I\otimes k) - \mathrm{indeg} \left(\Gamma_{\mathfrak{m}}\,M\right) + 1.
$$
\end{proof}

\begin{lemma}\label{lem_2-dim-full}
Let $0 \neq M \simeq R/I$ be a nonzero principal module.
The following hold:
\begin{enumerate}[label=(\arabic*)]
\item 
Let $z\in  \mathrm{NZD}_R\left(M / \Gamma_{\mathfrak{m}}M\right)$, 
$\overline{R}=R/zR$ 
and $\overline{M}=M\otimes\overline{R}$.\\
If $\mathrm{pd}_R(M) \neq 0$, then $\beta_1^{\overline{R}}(\overline{M})=1$.
\item 
Let $z \in R_1 \setminus \{0\}$. The following are equivalent:
\begin{enumerate}[label=(\roman*)]
\item $z\in  \gamma\text{-}\mathrm{NZD}_R(M)$.
\item $0\underset{M}:z=\mathrm{soc}M$.
\end{enumerate}
\end{enumerate}
\end{lemma}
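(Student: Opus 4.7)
The plan rests on two facts: $\overline{R}=R/zR\simeq K[y]$ is a PID, and Lemma \ref{lem_r-reg-cr} (2) provides a numerical criterion for $\gamma$-regularity. For part (1), I will write $\overline{M}=\overline{R}/\overline{I}$ where $\overline{I}$ is the image of $I$ in $\overline{R}$. Since $I+zR\subseteq\mathfrak{m}$, we have $\overline{I}\neq\overline{R}$, so $\beta_1^{\overline{R}}(\overline{M})$ is either $0$ (if $\overline{I}=0$) or $1$ (otherwise, as every nonzero ideal of the PID $\overline{R}$ is principal). So I only need to rule out $I\subseteq zR$. I argue by contradiction: assume $I\subseteq zR$. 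Since $\mathrm{pd}_{R}M\neq 0$ forces $I\neq 0$ and $\bigcap_{k\geq 0}z^{k}R=0$ in the graded ring $R$, there is a maximal $k\geq 1$ with $I\subseteq z^{k}R$; writing $I=z^{k}J$ with $J=I:_{R}z^{k}$ forces $J\not\subseteq zR$. Since $R$ is a domain, $(z^{k}J):_{R}z=z^{k-1}J$, hence $0\underset{M}:z=z^{k-1}J/z^{k}J\simeq J/zJ$ up to degree shift. Reducing modulo $z$ gives a surjection $J/zJ\twoheadrightarrow J\overline{R}$, where $J\overline{R}\subseteq\overline{R}$ is a nonzero ideal of the PID $\overline{R}$, hence infinite-dimensional over $K$. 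Thus $\alpha(M;z)=\infty$, contradicting $z\in\mathrm{NZD}_{R}(M/\Gamma_{\mathfrak{m}}M)$ via Remark \ref{rem_almost-reg}.

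For part (2), I may assume $\mathrm{pd}_{R}M\neq 0$, since the case $M=R$ is trivial (both sides hold as $0\underset{M}:z=0=\mathrm{soc}\,M$ and $z$ is automatically $\gamma$-regular). I will need two preliminary Betti-number identities. First, $\beta_{1}^{R}(M)=\beta_{2}^{R}(M)+1$: for cyclic $M=R/I$ with $I\neq 0$, $M$ is $R$-torsion, so localizing the minimal resolution $0\to R^{\beta_{2}}\to R^{\beta_{1}}\to R\to M\to 0$ at the generic point $(0)$ gives $1-\beta_{1}+\beta_{2}=0$. Second, $\beta_{2}^{R}(M)=\dim_{K}\mathrm{soc}\,M$: the Koszul complex on $(x_{1},x_{2})$ has $K_{2}=M(-2)$ and differential $d_{2}(m)=(-x_{2}m,x_{1}m)$, so $\mathrm{Tor}_{2}^{R}(M,k)\simeq\ker d_{2}=\mathrm{soc}\,M(-2)$. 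Together these give $\beta_{1}^{R}(M)=\dim_{K}\mathrm{soc}\,M+1$.

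I then apply Lemma \ref{lem_r-reg-cr} (2)(ii), which asserts $z\in\gamma\text{-}\mathrm{NZD}_{R}M$ iff $\beta_{1}^{R}(M)=\alpha(M;z)+\beta_{1}^{\overline{R}}(\overline{M})$. For the forward direction, Remark \ref{rem_r-reg} (6) supplies $z\in\mathrm{NZD}_{R}(M/\Gamma_{\mathfrak{m}}M)$, so part (1) yields $\beta_{1}^{\overline{R}}(\overline{M})=1$, and the criterion collapses to $\alpha(M;z)=\dim_{K}\mathrm{soc}\,M$; since $\mathrm{soc}\,M\subseteq 0\underset{M}:z$ always, equality of finite $K$-dimensions gives $0\underset{M}:z=\mathrm{soc}\,M$. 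Conversely, if $0\underset{M}:z=\mathrm{soc}\,M$, then $\alpha(M;z)$ is finite, so $z$ is almost $M$-regular, part (1) again gives $\beta_{1}^{\overline{R}}(\overline{M})=1$, and the identity $\beta_{1}^{R}(M)=\dim_{K}\mathrm{soc}\,M+1=\alpha(M;z)+\beta_{1}^{\overline{R}}(\overline{M})$ yields $z\in\gamma\text{-}\mathrm{NZD}_{R}M$. The main obstacle is the contradiction in part (1): pinning down $J/zJ$ as infinite-dimensional via its surjection onto a nonzero ideal of the PID $\overline{R}$. The rest is bookkeeping: the rank identity for $\beta_{1},\beta_{2}$ is a standard generic-point argument (essentially Hilbert-Burch), and $\beta_{2}=\dim_{K}\mathrm{soc}\,M$ is a one-line Koszul computation.
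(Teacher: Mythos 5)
Your proof is correct. Part (2) follows essentially the same path as the paper: you establish the two Betti-number identities $\beta_1^R(M)=\beta_2^R(M)+1$ (via rank count after killing the cyclic torsion module at the generic point) and $\beta_2^R(M)=\dim_K\mathrm{soc}\,M$ (via the Koszul complex on $x_1,x_2$), then feed the finiteness of $\alpha(M;z)$, part (1), and the criterion of Lemma \ref{lem_r-reg-cr} (2) into both directions of the equivalence exactly as the paper does.

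Part (1) is where you genuinely diverge from the paper's route, and both arguments are valid. The paper reuses the structural observation from the proof of Lemma \ref{lem_2-dim-prin} (1): by Auslander--Buchsbaum, $M/\Gamma_{\mathfrak{m}}(M)$ has projective dimension at most $1$, so the saturation $I:_R\mathfrak{m}^\infty$ is a principal ideal $Rf$; then $I\subseteq zR$ would force $Rf\subseteq zR$, making $z$ a zerodivisor on $M/\Gamma_{\mathfrak{m}}(M)\simeq R/Rf$, a contradiction. You avoid any appeal to the principality of the saturation and argue entirely by hand: assuming $I\subseteq zR$, you strip off the maximal power $z^k$ to write $I=z^kJ$ with $J\not\subseteq zR$, identify $0:_M z\simeq J/zJ$ (using that $R$ is a domain so $z^kJ:_R z=z^{k-1}J$), and observe that $J/zJ$ surjects onto the nonzero graded ideal $J\overline{R}$ of the PID $\overline{R}\simeq K[y]$, which is infinite-dimensional; this makes $\alpha(M;z)=\infty$, contradicting $z\in\mathrm{NZD}_R(M/\Gamma_{\mathfrak{m}}M)$ via Remark \ref{rem_almost-reg}. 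Your version is more elementary in that it never needs the ``saturation is a rank-one free module'' fact, at the cost of a slightly longer calculation; the paper's version is shorter but leans on the depth-projective-dimension machinery it has already set up for the other lemmas in Section \ref{sec_2var}.
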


\begin{proof}
(1) It is enough to show that $I\not\subseteq Rz$.
Using the same argument as in the proof of Lemma \ref{lem_2-dim-prin} (1),
we can see that $I\underset{R}:\mathfrak{m}^{\infty}=Rf$ for some 
homogeneous polynomial $f \in R$ and  $M / \Gamma_{\mathfrak{m}}M\simeq R/Rf$.
If $I\subseteq Rz$, then $Rf \subseteq Rz$ ,i.e., 
$z\not\in  \mathrm{NZD}_R\left(M / \Gamma_{\mathfrak{m}}M\right)$. This is a contradiction.
Hence, $I\not\subseteq Rz$.\\
(2) If $\mathrm{pd}_R(M)= 0$, then the assertion clearly holds.\\
Assume that $\mathrm{pd}_R(M) \neq 0$. Then we note that $M\otimes Q(R)=0$, 
where $Q(R)=K(x_1, x_2)$ the quotient field of $R$,  
and the following holds:
$$
\beta_1^R(M) =\beta_0^R(M)+\beta_2^R(M)=1+\dim_K\mathrm{soc}M.
$$
If (ii) holds, then 
$z\in  \mathrm{NZD}_R\left(M / \Gamma_{\mathfrak{m}}M\right)$.
Using (1), we obtain
$$
\beta_1^R(M) =\beta_1^{\overline{R}}(\overline{M})+\alpha(M; z).
$$
Hence, $z\in  \gamma\text{-}\mathrm{NZD}_R(M)$ from Lemma \ref{lem_r-reg-cr} (2).
The converse implication is clear.
\end{proof}

\begin{lemma}\label{lem_2-dim-prin-sdg}
Let $0 \neq M \simeq R/I$ be a nonzero principal module.
Assume that $\mathbf{deg}(I \otimes k) = \{d\}$ and $\mathrm{pd}_R(M) =2$.
Then the following are equivalent:
\begin{enumerate}[label=(\arabic*)]
\item 
$M$ has a componetwise linear first syzygy.
 \item 
 $I = \mathfrak{m}^{d-e}f$ for some homogeneous element $f \in R$ with $\mathrm{indeg}(Rf) = e$.
\end{enumerate}
\end{lemma}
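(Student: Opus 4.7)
The plan is to combine Theorem~\ref{thm_2-dim-prin} (which translates componentwise linearity of $\mathrm{Syz}_1^R M$ into a numerical equation on $\beta_1^R(M)$) with Lemma~\ref{lem_2-dim-prin} (1) (which describes the saturation of $I$), and then perform a small combinatorial count valid only in two variables. Throughout, let $e = \mathrm{indeg}(\Gamma_{\mathfrak{m}} M)$, so that the exponent appearing in the lemma's condition (2) is interpreted as this value.

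For the direction $(2)\Rightarrow(1)$, I would first note that $\mathfrak{m}^{d-e}$ is minimally generated by the $d-e+1$ monomials $x_1^{d-e},\, x_1^{d-e-1}x_2,\,\ldots,\,x_2^{d-e}$, and since $f$ is a nonzerodivisor in the domain $R$, multiplication by $f$ preserves minimal generating sets. Hence $\beta_1^R(M) = \mu(I) = d-e+1$. Next I would identify the saturation: $J := I \underset{R}{:}\mathfrak{m}^{\infty} = Rf$, because $R/Rf$ has depth $1$ (Cohen--Macaulay) and $\mathfrak{m}^{d-e}f \underset{R}{:}\mathfrak{m}^{\infty} = Rf$. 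Since $I$ is generated in degree $d > e$, the class of $f$ in $J/I = \Gamma_{\mathfrak{m}}(M)$ is nonzero, so $\mathrm{indeg}(\Gamma_{\mathfrak{m}} M) = e$. Plugging into Theorem~\ref{thm_2-dim-prin} gives (1).

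For the direction $(1)\Rightarrow(2)$, I would use Lemma~\ref{lem_2-dim-prin} (1) to write $J = Rf$ for some homogeneous $f$ of degree $e$. Because $I \subseteq Rf$ and $R$ is a domain, there is a unique homogeneous ideal $I' \subseteq R$ with $I = I'f$, and multiplication by $f$ is a degree-preserving (up to shift) bijection that carries minimal generators to minimal generators. Therefore $\mu(I) = \mu(I')$ and $I'$ is generated in degree $d-e$. By Theorem~\ref{thm_2-dim-prin}, $\mu(I) = d-e+1$, so $I'$ has exactly $d-e+1$ minimal generators, all of degree $d-e$.

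The key combinatorial step, which I expect to be the only substantive obstacle, is to push this numerical equality to the structural conclusion $I' = \mathfrak{m}^{d-e}$. Here the restriction $n=2$ is essential: $\dim_K R_{d-e} = d-e+1$, exactly matching $\mu(I')$, so the minimal generators of $I'$ (lying in $R_{d-e}$) are $K$-linearly independent and span all of $R_{d-e}$. Thus $I'_{d-e} = R_{d-e}$, which forces $\mathfrak{m}^{d-e} \subseteq I'$; the reverse containment is immediate because $I'$ has no generators in degree $< d-e$. Hence $I' = \mathfrak{m}^{d-e}$ and $I = \mathfrak{m}^{d-e}f$, giving (2).
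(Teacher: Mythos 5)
Your argument is correct, but it follows a genuinely different route from the paper's. The paper's proof is much shorter: it combines Lemma~\ref{lem_2-dim} (componentwise linearity of $\mathrm{Syz}_1^R M$ $\Leftrightarrow$ $\gamma\text{-}\mathrm{NZD}_R M\neq\emptyset$, in two variables) with Lemma~\ref{lem_nzd-sg}~(7), which for a single-degree syzygy says $\gamma\text{-}\mathrm{NZD}_R M\neq\emptyset$ if and only if $\mathrm{Syz}_1^R M = \bigl(\mathrm{Syz}_1^R M \underset{F_M}{:}\mathfrak{m}^{\infty}\bigr)_{\geq d}$. For $M=R/I$ this reads $I = J_{\geq d}$, and once Lemma~\ref{lem_2-dim-prin}~(1) gives $J=Rf$ the right-hand side is literally $\mathfrak{m}^{d-e}f$; both implications drop out with no further computation. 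You instead route through Theorem~\ref{thm_2-dim-prin}, converting componentwise linearity into the Betti-number identity $\beta_1^R(M)=d-e+1$, and then close the loop with the counts $\mu(\mathfrak{m}^{d-e}f)=d-e+1$ for $(2)\Rightarrow(1)$ and $\dim_K R_{d-e}=d-e+1$ (hence $I'_{d-e}=R_{d-e}$, hence $I'=\mathfrak{m}^{d-e}$) for $(1)\Rightarrow(2)$. Your version stays self-contained inside Section~\ref{sec_2var} and is arguably more elementary, but it rests on a numerical coincidence special to $n=2$, whereas the paper's route exposes the structural fact that this lemma is just the two-variable instance of the truncated-saturation criterion from Section~\ref{sec_single-gen}. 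Your bookkeeping is sound throughout, including the identification $\mathrm{indeg}\,\Gamma_{\mathfrak{m}}M = \mathrm{indeg}(Rf)=e$ and the observation that multiplication by $f$ is an $R$-module isomorphism $I'\to I$ shifting degrees by $e$, so it carries minimal generators to minimal generators.
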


\begin{proof}
(1) $\Rightarrow$ (2): From Lemma \ref{lem_2-dim-prin} (1), 
$I \underset{R}{:} \mathfrak{m}^\infty = Rf$ 
for some homogeneous element $f \in R$.
Since $\gamma\text{-}\mathrm{NZD}_R(M) \neq \emptyset$, 
applying Lemma $\ref{lem_nzd-sg}$ (7), we obtain $I = (Rf)_{\geq d} = \mathfrak{m}^{d-e}f$, 
where $\mathrm{indeg}(Rf) = e$.\\  
(2) $\Rightarrow$ (1): From Lemma $\ref{lem_nzd-sg}$ (7), 
$\gamma\text{-}\mathrm{NZD}_R(M) \neq \emptyset$. 
Hence, $M$ has a componetwise linear first syzygy by Lemma \ref{lem_2-dim}.
\end{proof}

\begin{theorem}\label{thm_2-dim-prin-mdg}
Let $0 \neq M \simeq R/I$ be a nonzero principal module. 
Assume that $\mathbf{deg}(I\otimes k) = \{d_1 < \cdots < d_r\}$ and $\mathrm{pd}_R(M) =2$. 
Then the following are equivalent:
\begin{enumerate}[label=(\arabic*)]
\item
$M$ has a componetwise linear first syzygy.
\item
There exist homogeneous polynomials $f_i \in R$ with 
$\mathrm{indeg}(Rf_i) = e_i$ for $i = 1, \ldots, r$ such that
$f_{i+1} \mid f_i$ for $i = 1, \ldots, r-1$ and
$$
I = \mathfrak{m}^{d_1 - e_1}f_1 + \cdots + \mathfrak{m}^{d_r - e_r}f_r.
$$
\end{enumerate}
\end{theorem}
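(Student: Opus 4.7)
The plan is to reduce to the single-generating-degree case handled in Lemma \ref{lem_2-dim-prin-sdg} via Proposition \ref{prop_rhat-cw}. Since $\mathrm{Syz}_1^R M = I$ inside $F_M = R$, the condition that $M$ have a componentwise linear first syzygy is equivalent, by Proposition \ref{prop_rhat-cw} $(1)\Leftrightarrow(4)$, to $\gamma\text{-}\mathrm{depth}_R \mathrm{C}^R_{\langle d_i \rangle} M = n = 2$ for $i = 1, \ldots, r$; and each $\mathrm{C}^R_{\langle d_i \rangle} M = R/I_{\langle d_i \rangle}$ is a principal module minimally generated in the single degree $d_i$, so Lemma \ref{lem_2-dim-prin-sdg} describes exactly these pieces.

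For $(1) \Rightarrow (2)$, I would first invoke the equivalence above together with Lemma \ref{lem_2-dim} to get that each $R/I_{\langle d_i \rangle}$ has a componentwise linear first syzygy. Lemma \ref{lem_2-dim-prin-sdg} then yields homogeneous $f_i$ with $\mathrm{indeg}(Rf_i) = e_i$ and $I_{\langle d_i \rangle} = \mathfrak{m}^{d_i - e_i} f_i$ (the trivial principal subcase $d_i = e_i$ being absorbed by taking $f_i$ to be any generator, since then $\mathfrak{m}^0 f_i = R f_i$). Since $\{d_1 < \cdots < d_r\}$ are exactly the minimal generating degrees of $I$, summing gives $I = \sum_i I_{\langle d_i \rangle} = \sum_i \mathfrak{m}^{d_i - e_i} f_i$. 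The divisibility $f_{i+1} \mid f_i$ would follow from $\mathfrak{m}^{d_{i+1} - d_i} I_{\langle d_i \rangle} \subseteq I_{\langle d_{i+1} \rangle}$ (the left side lies in $I$ and is generated in degree $d_{i+1}$): unpacking gives $\mathfrak{m}^{d_{i+1} - e_i} f_i \subseteq R f_{i+1}$, so $x_k^{d_{i+1} - e_i} f_i \in R f_{i+1}$ for $k = 1, 2$, and since $R = K[x_1, x_2]$ is a UFD with $\gcd(x_1^N, x_2^N) = 1$ we conclude $f_{i+1} \mid f_i$.

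For $(2) \Rightarrow (1)$, I would start from the given representation of $I$ and show that $I_{\langle d_j \rangle} = \mathfrak{m}^{d_j - e_j} f_j$ for each $j$. Iterating $f_{i+1} \mid f_i$ gives $f_j \mid f_i$ whenever $i \leq j$, so for $i \leq j$ the degree-$d_j$ part of $\mathfrak{m}^{d_i - e_i} f_i$, which equals $R_{d_j - e_i} f_i$, is contained in $R_{d_j - e_j} f_j$; for $i > j$ that summand vanishes in degree $d_j$. Hence $I_{d_j} = R_{d_j - e_j} f_j$ and thus $I_{\langle d_j \rangle} = \mathfrak{m}^{d_j - e_j} f_j$. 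Lemma \ref{lem_2-dim-prin-sdg} (together with Lemma \ref{lem_2-dim}) then gives $\gamma\text{-}\mathrm{depth}_R \mathrm{C}^R_{\langle d_j \rangle} M = 2$ for every $j$, and Proposition \ref{prop_rhat-cw} $(4) \Rightarrow (1)$ concludes that $M$ has a componentwise linear first syzygy.

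The main obstacle I expect is the passage in $(1) \Rightarrow (2)$ from the ideal inclusion $\mathfrak{m}^{d_{i+1} - e_i} f_i \subseteq R f_{i+1}$ to the pointwise divisibility $f_{i+1} \mid f_i$; this is precisely where the two-variable hypothesis is essential, via the coprimality of pure powers of distinct variables in the UFD $R = K[x_1, x_2]$. A secondary bookkeeping point is the uniform treatment of the principal subcase $d_i = e_i$ (where the hypothesis $\mathrm{pd}_R M = 2$ of Lemma \ref{lem_2-dim-prin-sdg} is violated for the piece $R/I_{\langle d_i \rangle}$) so that the formula $I_{\langle d_i \rangle} = \mathfrak{m}^{d_i - e_i} f_i$ holds uniformly across $i = 1, \ldots, r$.
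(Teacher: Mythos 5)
Your proposal is correct and follows the same overall route as the paper: reduce via Proposition~\ref{prop_rhat-cw} to the single-degree pieces $\mathrm{C}^R_{\langle d_i\rangle}M = R/I_{\langle d_i\rangle}$, apply Lemma~\ref{lem_2-dim-prin-sdg} to each, and sum. Your explicit invocation of the equivalence $(3)\Leftrightarrow(4)$ (you cite $(1)\Leftrightarrow(4)$, but all four conditions are equivalent, so this is harmless) and your careful accounting for the case where $I_{\langle d_i\rangle}$ is principal (so that $\mathrm{pd}_R(R/I_{\langle d_i\rangle})=1$ and Lemma~\ref{lem_2-dim-prin-sdg} is not literally applicable) actually spell out details that the paper's terse ``Using Lemma~\ref{lem_2-dim-prin-sdg}\ldots'' and ``This follows from Lemma~\ref{lem_2-dim-prin-sdg}'' elide.

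The one genuine deviation is the derivation of $f_{i+1}\mid f_i$. You start, as the paper does, from $\mathfrak m^{d_{i+1}-d_i}I_{\langle d_i\rangle}\subseteq I_{\langle d_{i+1}\rangle}$, but you then unfold it to $\mathfrak m^{d_{i+1}-e_i}f_i\subseteq Rf_{i+1}$ and argue elementwise in the UFD $K[x_1,x_2]$: since $f_{i+1}\mid x_1^N f_i$ and $f_{i+1}\mid x_2^N f_i$ with $\gcd(x_1^N,x_2^N)=1$, you get $f_{i+1}\mid f_i$. This works, and it makes the dependence on $n=2$ very visible. The paper instead applies $(-)\colon_R\mathfrak m^\infty$ to both sides of the inclusion and uses that saturation kills the $\mathfrak m$-power factor: $Rf_{i+1}=I_{\langle d_{i+1}\rangle}\colon\mathfrak m^\infty\supseteq I_{\langle d_i\rangle}\colon\mathfrak m^\infty=Rf_i$. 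The saturation argument is a one-liner, dodges explicit prime factorizations, and uses only formulas already established in Lemma~\ref{lem_2-dim-prin}, so it is somewhat cleaner; but your UFD argument is perfectly valid and arguably more self-contained.
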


\begin{proof}
(1) $\Rightarrow$ (2): 
We note that 
$I = I_{\langle d_1 \rangle} + \cdots + I_{\langle d_r \rangle}$. 
Using Lemma \ref{lem_2-dim-prin-sdg}, 
we have 
$$I_{\langle d_i \rangle} \underset{R}{:} \mathfrak{m}^\infty= Rf_i
\text{ and }
I_{\langle d_i \rangle} = \mathfrak{m}^{d_i - e_i}f_i
$$ 
for some homogeneous element $f_i \in R$ with 
$\mathrm{indeg}(Rf_i) = e_i$ for $i = 1, \ldots, r$. 
Moreover, $f_{i+1} \mid f_i$,
since $I_{\langle d_{i+1} \rangle} \supseteq \mathfrak{m}^{d_{i+1} - d_i}I_{\langle d_i \rangle}$
and 
$$
I_{\langle d_{i+1} \rangle} \underset{R}{:} \mathfrak{m}^\infty = Rf_{i+1} \supseteq 
I_{\langle d_i \rangle} \underset{R}{:} \mathfrak{m}^\infty= Rf_i
$$
for $i = 1, \ldots, r-1$.\\
(2) $\Rightarrow$ (1): This follows from Lemma \ref{lem_2-dim-prin-sdg}. 
\end{proof}

From Theorem \ref{thm_2-dim-prin-mdg} and  Theorem \ref{thm_2-dim-prin}, 
we obtain the following corollary.

\begin{corollary}\label{cor_2-dim-prin-mdg}
With the same conditions as in Theorem \ref{thm_2-dim-prin-mdg} (2), 
the following hold:
\begin{enumerate}[label=(\arabic*)]
\item
$\dim_K(I\otimes k) = d_1 - e_r + 1$ and $\dim_K\mathrm{soc}M = d_1 - e_r$.
\item
$\gamma\text{-}\mathrm{NZD}_R(M) = 
\hat{\gamma}\text{-}\mathrm{NZD}_R(M) = 
\mathrm{NZD}_R\left(\mathrm{C}^R_{\langle d_1 \rangle}M / \Gamma_{\mathfrak{m}}(\mathrm{C}^R_{\langle d_1 \rangle}M)\right)$.
\end{enumerate}
\end{corollary}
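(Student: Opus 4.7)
The plan is to exploit the explicit presentation $I = \mathfrak{m}^{d_1-e_1}f_1 + \cdots + \mathfrak{m}^{d_r-e_r}f_r$ from Theorem \ref{thm_2-dim-prin-mdg} (2), observing that the divisibility chain $f_r \mid f_{r-1} \mid \cdots \mid f_1$ is equivalent to the ascending chain $Rf_1 \subseteq Rf_2 \subseteq \cdots \subseteq Rf_r$, so $Rf_r$ is the largest of these principal ideals and all irreducible factors of any $f_i$ appear as irreducible factors of $f_1$.

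For (1), the first step is to identify $I \underset{R}{:}\mathfrak{m}^{\infty}$. Writing $f_i = g_i f_r$ with $g_r = 1$, I factor $I = I'\, f_r$ where $I' = \sum_{i=1}^{r} \mathfrak{m}^{d_i-e_i}g_i \supseteq \mathfrak{m}^{d_r-e_r}$. The case $d_r = e_r$ would give $I = Rf_r$, forcing $\mathrm{pd}_R M = 1$ and contradicting the hypothesis, so $d_r > e_r$ and $I'$ is $\mathfrak{m}$-primary. Since $R/Rf_r$ has positive depth, this forces $\Gamma_\mathfrak{m}(M) \simeq Rf_r/I \simeq R/I'$ and $I \underset{R}{:}\mathfrak{m}^{\infty} = Rf_r$. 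Consequently $\mathrm{indeg}(\Gamma_\mathfrak{m}\,M) = e_r$, and Theorem \ref{thm_2-dim-prin} gives $\dim_K(I \otimes k) = \beta_1^R(M) = d_1 - e_r + 1$. For the socle, recall $\beta_2^R(M) = \dim_K\mathrm{Tor}_2^R(M,k) = \dim_K\mathrm{soc}\,M$ via the Koszul complex on $x_1, x_2$. Since $I \neq 0$ gives $\dim M \leq 1 < n = 2$, the numerator $\sum_{i,j}(-1)^i \beta_{ij}^R(M) u^j$ of $(1-u)^2 H_M(u)$ must vanish at $u=1$, yielding $1 - \beta_1^R(M) + \beta_2^R(M) = 0$ and hence $\dim_K \mathrm{soc}\,M = d_1 - e_r$.

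For (2), the first equality is Remark \ref{rm_2-dim} (1). By Lemma \ref{lem_rhat-cw} (2),
$$\hat{\gamma}\text{-}\mathrm{NZD}_R(M) = \bigcap_{i=1}^{r}\gamma\text{-}\mathrm{NZD}_R\,\mathrm{C}^R_{\langle d_i \rangle}M.$$
Each $\mathrm{C}^R_{\langle d_i \rangle}M = R/(\mathfrak{m}^{d_i-e_i}f_i)$ has a componentwise linear first syzygy by Lemma \ref{lem_2-dim-prin-sdg}, so $\gamma\text{-}\mathrm{NZD}_R\,\mathrm{C}^R_{\langle d_i \rangle}M \neq \emptyset$. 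Corollary \ref{cor_nzd} (3) combined with Lemma \ref{lem_2-dim-prin} (1) then gives
$$\gamma\text{-}\mathrm{NZD}_R\,\mathrm{C}^R_{\langle d_i \rangle}M = \mathrm{NZD}_R(\mathrm{C}^R_{\langle d_i \rangle}M / \Gamma_\mathfrak{m}(\mathrm{C}^R_{\langle d_i \rangle}M)) = \mathrm{NZD}_R(R/Rf_i).$$
The divisibility $f_i \mid f_1$ shows $\mathrm{NZD}_R(R/Rf_1) \subseteq \mathrm{NZD}_R(R/Rf_i)$ for every $i$, so intersecting collapses the expression to $\mathrm{NZD}_R(R/Rf_1) = \mathrm{NZD}_R(\mathrm{C}^R_{\langle d_1 \rangle}M / \Gamma_\mathfrak{m}(\mathrm{C}^R_{\langle d_1 \rangle}M))$, which is the desired equality.

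The main obstacle is the $\mathfrak{m}$-primary step in part (1): one must rule out $d_r = e_r$, and then convert the algebraic factorisation $I = I'\,f_r$ into the sheaf-theoretic identity $\Gamma_\mathfrak{m}(M) = Rf_r/I$ using that $R/Rf_r$ is depth-$1$. Once $I \underset{R}{:}\mathfrak{m}^{\infty} = Rf_r$ is secured, the rest of (1) is an application of Theorem \ref{thm_2-dim-prin} and the standard Hilbert-series Euler identity, and (2) becomes a transparent intersection computation driven entirely by the divisibility chain.
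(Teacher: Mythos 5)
Your proof of part (2) is exactly the paper's argument: reduce $\hat{\gamma}\text{-}\mathrm{NZD}_R(M)$ to the intersection $\bigcap_{i}\gamma\text{-}\mathrm{NZD}_R\,\mathrm{C}^R_{\langle d_i\rangle}M$ (Lemma \ref{lem_rhat-cw} (2)), convert each to $\mathrm{NZD}_R(R/Rf_i)$ via Corollary \ref{cor_nzd} (3), and collapse the intersection using $f_r\mid\cdots\mid f_1$. The paper explicitly omits part (1); your argument -- factoring $I=I'f_r$ with $I'$ $\mathfrak{m}$-primary, deducing $\Gamma_{\mathfrak{m}}(M)\simeq Rf_r/I$ since $R/Rf_r$ has positive depth, so $\mathrm{indeg}(\Gamma_{\mathfrak{m}}M)=e_r$, then invoking Theorem \ref{thm_2-dim-prin} together with the Euler identity $1-\beta_1+\beta_2=0$ and $\beta_2^R(M)=\dim_K\mathrm{soc}\,M$ -- is a correct completion and appears to be the intended one. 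The only point worth stating explicitly is that $e_r<d_1$ (so that $f_r\notin I$ on degree grounds): the chain $e_r\le e_i\le e_1\le d_1$ with $e_r=d_1$ would force all $e_i$ equal, all $f_i$ associate, hence $I=Rf_1$ and $\mathrm{pd}_R M\le 1$, contradicting the hypothesis.
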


\begin{proof} We only prove (2). Since $M$ has a componetwise linear first syzygy, 
we obtain
$$
\emptyset\neq\gamma\text{-}\mathrm{NZD}_R(M) = 
\hat{\gamma}\text{-}\mathrm{NZD}_R(M) = 
\bigcap_{i=1}^r \mathrm{NZD}_R\left(\mathrm{C}^R_{\langle d_i \rangle}M / \Gamma_{\mathfrak{m}}(\mathrm{C}^R_{\langle d_i \rangle}M)\right)
$$ 
by  Remark \ref{rm_2-dim} (1) and Corollary \ref{cor_nzd} (3).\\
Moreover, noting that
$\mathrm{C}^R_{\langle d_i \rangle}M / \Gamma_{\mathfrak{m}}(\mathrm{C}^R_{\langle d_i \rangle}M) \simeq R / f_iR$ 
for $i = 1, \ldots, r$ and $f_{i+1} \mid f_i$ for $i = 1, \ldots, r-1$, we obtain
$$
\bigcap_{i=1}^r \mathrm{NZD}_R\left(\mathrm{C}^R_{\langle d_i \rangle}M / \Gamma_{\mathfrak{m}}(\mathrm{C}^R_{\langle d_i \rangle}M)\right) = 
\mathrm{NZD}_R\left(\mathrm{C}^R_{\langle d_1 \rangle}M / \Gamma_{\mathfrak{m}}(\mathrm{C}^R_{\langle d_1 \rangle}M)\right). 
$$
\end{proof}

%
%

\section*{Acknowledgement}
The author is deeply grateful to 
Professor Tadahito Harima and Professor Yuji Yoshino 
for their helpful comments and encouragement.
This work was supported by JSPS KAKENHI Grant Number JP20K03508 and JP19K03448.



%
%

\vspace{30pt}
\today
\end{document}